\newtheorem{lemma}[equation]{Lemma}
\newtheorem{theorem}[equation]{Theorem}
\newtheorem{defn}[equation]{Definition}
\newtheorem{prop}[equation]{Proposition}
\newenvironment{proof}{\vspace{-0.05in}\noindent{\bf Proof.}}%
        {\hspace*{\fill}$\Box$\par}
        {\hspace*{\fill}$\Box$\par\vspace{4mm}}
        {\hspace*{\fill}$\Box$\par}
\newcommand{\mypm}{\mathbin{\mathpalette\@mypm\relax}}
\newcommand{\@mypm}[2]{\ooalign{%
		\raisebox{.1\height}{$#1+$}\cr
		\smash{\raisebox{-.6\height}{$#1-$}}\cr}}
\numberwithin{equation}{section}
\title{Vortex-type equations on compact Riemann surfaces}
\author{Kartick Ghosh}
\date{}
\begin{document} 	 	
	 	\maketitle
	\begin{abstract} In this paper, we prove \emph{a priori} estimates  for some vortex-type equations on compact Riemann surfaces. As applications, we recover existing estimates for the vortex bundle Monge-Amp\`ere equation, prove an existence and uniqueness theorem for the Calabi-Yang-Mills equations on vortex bundles and get estimates for $J-$vortex equation. We prove an existence and uniqueness result relating Gieseker stability and the existence of almost Hermitian Einstein metrics, i.e., a Kobayashi-Hitchin type correspondence. We also prove K\"ahlerness of the negative of the symplectic form which arises in the moment map interpretation of the Calabi-Yang-Mills equations in \cite{Vamsi3}. 
	\end{abstract}
	\section{Introduction.}\label{sec:intro}
		Let $\Sigma$ be a compact Riemann surface, and $L$ be a holomorphic line bundle over it. Let $\phi \in H^{0}(\Sigma,L)$  be not identically zero and $\Sigma$ be endowed with a metric whose associated $(1,1)$-form is $\chi$. We are interested in the following family of equations (for a Hermitian metric $h_{\alpha,t}$ on $L$) that depend on the parameters $\alpha\geq 0$ and  $0\leq t \leq 1$
	\begin{equation}
\label{1}	
i\Theta_{h_{\alpha,t}}=(d-\lvert\phi\rvert_{h_{\alpha,t}}^{2})\frac{e_{\alpha}u_{\alpha}^{1-t}\chi+itk_{\alpha}\nabla^{(1,0)}\phi\wedge\nabla^{(0,1)}\phi^{*}}{a_{\alpha}+b_{\alpha}t\lvert\phi\rvert_{h_{\alpha,t}}^{2}-c_{\alpha}t^{2}\lvert\phi\rvert_{h_{\alpha,t}}^{4}},
\end{equation}
	where $d>0,a_{\alpha}>0,e_{\alpha}>0,b_{\alpha},c_{\alpha},k_{\alpha}\geq 0$ are constants, $u_{\alpha}=\frac{a_{\alpha}i\Theta_{h_{\alpha,t=0}}}{e_{\alpha}(d-\lvert\phi\rvert_{h_{\alpha,t=0}}^{2})\chi}>0$ is a function, and $\Theta_{h_{\alpha,t}}$ is the curvature of the metric $h_{\alpha,t}.$ We choose a path in the $\alpha$-$t$ plane (lying in the region $\alpha\geq 0, 0< t\leq 1$) which starts at $(\alpha_{0},t_{0})$ and ends at $(\alpha_{1}, t_{1})$.

\par
	Suppose $h_{\alpha,t}=h_{\alpha,t=0}e^{-\psi_{\alpha,t}}$ solves the above equation. The following theorem proves $C^{2,\beta}$ \textit{a priori} estimate on $\psi_{\alpha,t}$.  From now onwards, we suppress the dependence of $a_{\alpha},b_{\alpha},c_{\alpha},e_{\alpha},k_{\alpha}$ on $\alpha$.
	\begin{theorem} \label{2}
		Suppose $a,b,c,e,k$ depend on $\alpha$ continuously and $b-cd\geq 0$.Then the following statements hold.
\begin{enumerate}
\item If  $\lVert{\psi_{\alpha,t}}\rVert_{C^{0}} \leq C_{\alpha}$, then $\lVert{\psi_{\alpha,t}}\rVert_{C^{2,\beta}}<C,$ where $C_{\alpha}$ is independent of $t$.
\item If $b-(k+ct)d\geq0,$ then  ${\psi_{\alpha,t}} \geq -C'_{\alpha}$, where $C'_{\alpha}$ is independent of $t$.\\
Moreover, if we have $de>a$  and $i\Theta_{0}=\chi,$ then $\psi_{\alpha,t}\leq C''_{\alpha},$ where $C''_{\alpha}$ is independent of $t.$

\end{enumerate}
In addition, if the hypotheses above hold for all $\alpha\in [\alpha_0,\alpha_1]$ where $\alpha_0, \alpha_1>0$, then $C_{\alpha},$ $C'_{\alpha}$ and $C''_{\alpha}$ depend only on $\alpha_0, \alpha_1$.
	\end{theorem}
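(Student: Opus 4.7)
The plan is to treat \eqref{1} as a scalar quasilinear elliptic PDE on $\Sigma$ for the potential $\psi_{\alpha,t}$. On a Riemann surface every $(1,1)$-form is a multiple of $\omega_\Sigma$, so writing $i\Theta_{h_{\alpha,t}} = i\Theta_0 + i\partial\bar\partial\psi_{\alpha,t}$ and $|\phi|^2_{h_{\alpha,t}} = e^{-\psi_{\alpha,t}}|\phi|^2_{h_{\alpha,0}}$ reduces \eqref{1} to a scalar equation $\Delta\psi_{\alpha,t} = F(x, \psi_{\alpha,t}, \nabla\psi_{\alpha,t})$, with $F$ smooth and at most quadratic in $\nabla\psi_{\alpha,t}$ (the gradient dependence enters through the $h_{\alpha,t}$-Chern connection term in $|\nabla^{1,0}\phi|^2_{h_{\alpha,t}}$). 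The blanket hypothesis $b - cd \geq 0$ makes the quadratic $a + bts - ct^2s^2$ positive on $s \in [0, d]$ (it is concave with non-negative values at the endpoints), so a $C^0$ bound on $\psi_{\alpha,t}$ confines $s := |\phi|^2_{h_{\alpha,t}}$ to a bounded range where the denominator of \eqref{1} stays uniformly positive and $F$ is smooth.

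For part (1), I perform a standard elliptic bootstrap. A gradient-Bernstein argument (or Ladyzhenskaya--Uraltseva theory) for two-dimensional quasilinear equations with natural growth upgrades the $C^0$ bound to $C^{1,\beta}$; with $\psi_{\alpha,t} \in C^{1,\beta}$ the right-hand side of \eqref{1} lies in $C^{0,\beta}$, so Schauder estimates yield $\psi_{\alpha,t} \in C^{2,\beta}$. All constants depend only on $C_\alpha$ and the fixed background data, and continuous dependence of $a, b, c, e, k$ on $\alpha$ then produces uniformity over any compact range $[\alpha_0, \alpha_1]$.

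For part (2), I use the maximum principle after eliminating the explicit gradient term in \eqref{1} via the Bochner-type identity
\[i\nabla^{1,0}\phi \wedge \nabla^{0,1}\phi^* = i\partial\bar\partial|\phi|^2_{h_{\alpha,t}} + |\phi|^2_{h_{\alpha,t}}\, i\Theta_{h_{\alpha,t}},\]
which holds for the holomorphic $\phi$. Substitution and regrouping recast \eqref{1} as
\[\bigl(a + (b-kd)ts + (k-ct)ts^2\bigr)\, i\Theta_{h_{\alpha,t}} = (d - s)\bigl(eu^{1-t}\omega_\Sigma + tk\, i\partial\bar\partial s\bigr).\]
At a minimum $p$ of $\psi_{\alpha,t}$ one has $\nabla\psi_{\alpha,t}(p) = 0$ and $i\partial\bar\partial\psi_{\alpha,t}(p) \geq 0$, hence $i\partial\bar\partial s(p) = e^{-\psi_{\alpha,t}(p)}\bigl(i\partial\bar\partial r - r\, i\partial\bar\partial\psi_{\alpha,t}\bigr)(p) \leq e^{-\psi_{\alpha,t}(p)}\, i\partial\bar\partial r(p)$ with $r := |\phi|^2_{h_{\alpha,0}}$. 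The hypothesis $b - (k+ct)d \geq 0$ is precisely what makes the coefficient $P(s) := a + (b-kd)ts + (k-ct)ts^2$ strictly positive on $[0, d]$ (check the endpoints) and dominant enough on the left-hand side to rule out $\psi_{\alpha,t}(p)$ being arbitrarily negative; a case analysis on the sign of $(d - s(p))$ combined with the Hessian bound above yields the lower bound $\psi_{\alpha,t} \geq -C'_\alpha$. For the upper bound, at a maximum $q$ the maximum principle gives $i\Theta_{h_{\alpha,t}}(q) \leq i\Theta_0(q) = \omega_\Sigma$, while $s(q)$ and $|\nabla^{1,0}\phi|^2_{h_{\alpha,t}}(q)$ both vanish as $\psi_{\alpha,t}(q) \to \infty$; the right-hand side of \eqref{1} at $q$ then approaches $deu(q)^{1-t}/a$, and the identity $u = a/(e(d-r))$ forced by $i\Theta_0 = \omega_\Sigma$ together with $de > a$ yields a short algebraic verification that this limit exceeds $1 = i\Theta_0(q)/\omega_\Sigma$, contradicting the inequality. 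Hence $\psi_{\alpha,t} \leq C''_\alpha$.

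The main obstacle is the lower-bound step: the maximum-principle argument at the minimum requires careful algebra, including case analysis on the sign of $(d - s(p))$ and the behavior of $P(s)$ at large $s$, to verify that $b - (k+ct)d \geq 0$ is exactly the threshold condition that closes the inequality. The $C^{1,\beta}$ step in part (1) also needs care because of the quadratic gradient growth in $F$, but this is classical in two dimensions.
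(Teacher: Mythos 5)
Your reduction to a scalar equation, the positivity analysis of the denominator, and the upper-bound argument (maximum principle at the max of $\psi$, $|\phi|^2_h\to 0$, and $u\geq a/(ed)$ giving $1\geq (de/a)^t>1$) all match the paper. But there are two genuine problems. First, you never prove the key preliminary fact that $|\phi|^2_{h_{t,\alpha}}\leq d$ (the paper's Lemma \ref{3}, a maximum-principle argument applied to $|\phi|^2$ itself, not to $\psi$). Your substitute --- that a $C^0$ bound on $\psi$ confines $s=|\phi|^2_h$ to ``a bounded range where the denominator stays positive'' --- is insufficient: $a+bts-ct^2s^2=a+ts(b-cts)$ is only guaranteed positive for $s\in[0,d]$ under $b-cd\geq 0$, and the range $s\leq e^{C_\alpha}\sup|\phi|^2_{h_0}$ coming from a $C^0$ bound need not lie in $[0,d]$. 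Moreover the bound $s\leq d$ is needed with no $C^0$ hypothesis at all for part (2).

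The serious gap is the lower bound $\psi\geq -C'_\alpha$. Your pointwise maximum-principle argument at the minimum $p$ of $\psi$ does not close. After your (correct) Bochner substitution the equation reads $P(s)\,i\Theta_h=(d-s)(eu^{1-t}\omega_\Sigma+tk\,i\partial\bar\partial s)$ with $P\geq a$, and at $p$ one gets $a\,i\Theta_0\leq (d-s)eu^{1-t}\omega_\Sigma+(d-s)tk\,e^{-\psi(p)}i\partial\bar\partial r$ with $r=|\phi|^2_{h_0}$. The dangerous points are near the zeros of $\phi$, where $r(p)=0$ and $i\partial\bar\partial r(p)=i\nabla^{1,0}_0\phi\wedge\nabla^{0,1}_0\phi^*(p)>0$; there the right-hand side blows up to $+\infty$ as $\psi(p)\to-\infty$, in the \emph{same} direction as the left-hand side, so no contradiction arises. (Equivalently: in the original equation the term $itk\nabla^{1,0}_h\phi\wedge\nabla^{0,1}_h\phi^*=e^{-\psi}\,itk\nabla^{1,0}_0\phi\wedge\nabla^{0,1}_0\phi^*$ at a critical point of $\psi$ grows exactly as fast as any putative growth of $i\Theta_h$.) The paper's argument is genuinely global: it combines (i) the mean bound $\int\psi\,\omega_\Sigma\geq\int\ln|\phi|^2_{h_0}\,\omega_\Sigma-C$, which follows from $|\phi|^2_h\leq d$ and the integrability of $\ln|\phi|^2_{h_0}$, with (ii) the one-sided curvature bound $a\,i\Theta_h\leq deu^{1-t}\omega_\Sigma+iktd\,\partial\bar\partial|\phi|^2_h$ (this is where $b-(k+ct)d\geq 0$ enters), fed into the Green representation formula \ref{7}; the troublesome Hessian term there contributes only $\frac{ktd}{a}\bigl(|\phi|^2_h(P)-\text{avg}\bigr)$, which is bounded by $d$. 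You would need to replace your local argument by something of this integral type. Separately, your route to the $C^1$ bound in part (1) via Bernstein/Ladyzhenskaya--Uraltseva is a legitimate alternative to the paper's blow-up argument (rescaling at a putative gradient blow-up point and invoking Liouville for bounded subharmonic functions on $\mathbb{C}$), but as stated it glosses over the structure conditions required for quadratic gradient growth; the blow-up argument avoids that issue entirely.
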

Theorem \ref{2} is applicable in a wide variety of situations as this paper hopes to demonstrate. \\ \par
The proof of theorem \ref{2} is similar to the one in \cite{vamsi2}. However the main aim of this result is the geometric consequences. The solution of the Calabi-Yang-Mills equations does not depend on the genus of the Riemann surface unlike in the case of K\"ahler-Yang-Mills equations in \cite{LMO2}(Theorem $1.1$). We also used our result to prove that Gieseker stability of a vector bundle is equivalent to the existence of an almost Hermitian Einstein metric, in a special case. We also prove K\"ahlerness of the negative of the symplectic form which arises in the moment map interpretation of the Calabi-Yang-Mills equations in \cite{Vamsi3} for small $\alpha$. The magnitude of the admissible $\alpha$ depends only on $\tau$ and the positivity of the Ricci curvature of $f$. We want to remark that there is a small computational gap in the uniqueness proof in \cite{vamsi2} which can be fixed following the calculations in this paper.\\
\indent In [\cite{LMO1},\cite{LMO2},\cite{LMOV}], the authors introduced the K\"ahler-Yang-Mills (KYM) equations to parametrise the moduli space of triples $(X,E,L)$ (of a polarised manifold with a holomorphic vector bundle over it) and proved existence, non-existence, and uniqueness results for the same. Solving the KYM equations in general, is quite challenging because they are of order four. Taking cue from the Calabi volume conjecture, Pingali \cite{Vamsi1} proposed to study the Calabi-Yang-Mills (CYM) equations as an easier toy model of the more complicated KYM equations and  provided a moment map interpretation for them\cite{Vamsi3}. In \cite{Vamsi1}, an openness result was proved for a special case of the CYM equations arising out of a vortex-type bundle. In more detail, let $X=\Sigma \times \mathbb{C}\mathbb{P}^{1}$. An action of $SU(2)$ on $X$ can be defined as follows : $SU(2)$ acts trivially on $\Sigma$ and in the standard manner on $\mathbb{C}\mathbb{P}^{1}=SU(2)/U(1)$. We now follow the calculations in [\cite{Vamsi1},\cite{Oscar1}]. Let $E$ be a rank-$2$ holomorphic vector bundle over $X$ defined as an extension :
	\[0\to \pi_{1}^{*}L\to E\to \pi_{2}^{*}\mathcal{O}(2)\to 0.\]
The second fundamental form of the extension is 
$\beta=\pi_{1}^{*}\phi\otimes\pi_{2}^{*}\xi,$
where $\xi=\frac{\sqrt{8\pi}dz}{(1+\lvert z\rvert^{2})^{2}}\otimes d\bar{z}.$\\ 
\indent Let $\tau>0$ be a constant and $\omega_{FS}=\frac{idz\wedge d\bar{z}}{(1+|z|^{2})^{2}}$ be the Fubini-Study metric on $\mathbb{C}\mathbb{P}^{1}.$ Denote by $\Omega=\pi_{1}^{*}\omega_{\Sigma}+\frac{4}{\tau}\omega_{FS}$, an $SU(2)$-invariant K\"ahler form on $X$ where $\int_{\Sigma}^{}\omega_{\Sigma}=vol(\Sigma)$ is fixed, by $H$, an $SU(2)$-invariant hermitian metric on $E$, and by $\Theta$ the curvature of $H$.
	 Then for this case the CYM equations amount to solving the following vortex-CYM equation for a Hermitian metric $h$ on $L$.
	 \begin{equation}
	 \label{25}
	 i\Theta_{h}=(\frac{\tau-\lvert\phi\rvert_{h}^{2}}{2})\frac{4f+\frac{i\alpha}{2(2\pi)^{2}}\nabla^{1,0}\phi\wedge\nabla^{0,1}\phi^{*}}{4+\frac{\tau\alpha}{(2\pi)^{2}}(2\lambda-\frac{\tau}{2})+\frac{\tau\alpha}{2(2\pi)^{2}}\lvert\phi\rvert_{h}^{2}-\frac{\alpha}{4(2\pi)^{2}}\lvert\phi\rvert_{h}^{4}},
	 \end{equation}
	 where $\alpha\geq0$ satisfies $8+\frac{2\tau\alpha}{(2\pi)^{2}}(2\lambda-\frac{\tau}{2})>0.$ In \ref{25} , $\Theta_{h}$ is the curvature of the metric $h$ on the line bundle $L$. In \cite{Vamsi1}, Pingali proved the set of $\alpha \geq 0$ satisfying
	 \[8+\frac{2\tau\alpha}{(2\pi)^{2}}(2\lambda-\frac{\tau}{2})>0\]
	 for which there exists a smooth form $\Omega_{\alpha}>0$
	 and a smooth metric $H_{\alpha}$ such
	 that the vortex-CYM equation is satisfied,
	 contains $\alpha=0$ and is open. Our first application of Theorem \ref{2} is the following result.
	 \begin{prop}
	 A	smooth solution of the vortex-CYM equation exists and is unique among all $SU(2)$-invariant solutions when $\alpha$ satisfies $8+\frac{2\tau\alpha}{(2\pi)^{2}}(2\lambda-\frac{\tau}{2})>0.$
	 \end{prop}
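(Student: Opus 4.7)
The plan is to run the continuity method in the parameter $\alpha$ (with $t=1$ held fixed) starting from $\alpha=0$, where equation \ref{25} reduces to the classical Bradlow vortex equation on $\Sigma$,
\[
i\Theta_h = \tfrac{1}{4}(\tau-\lvert\phi\rvert_h^2)\,f,
\]
which is known to admit a unique smooth solution once the degree condition built into the admissible range of $\alpha$ is imposed. Openness of the set of admissible $\alpha$ for which a smooth $SU(2)$-invariant solution exists has already been established in \cite{Vamsi1}, so the substance of the argument lies in closedness and in uniqueness.

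The first step is to recognise \ref{25} as an instance of \ref{1} at $t=1$, with $d=\tau$, $b_\alpha=\tfrac{\tau\alpha}{(2\pi)^2}$, $c_\alpha=\tfrac{\alpha}{2(2\pi)^2}$, $k_\alpha=\tfrac{\alpha}{2(2\pi)^2}$, and the corresponding $a_\alpha,e_\alpha$ matched off by absorbing the factor $\tfrac12$ in front of $\tau-\lvert\phi\rvert_h^2$. A direct computation then yields
\[
b_\alpha - c_\alpha d = \tfrac{\tau\alpha}{2(2\pi)^2}\geq 0, \qquad \bigl(b_\alpha - (k_\alpha+c_\alpha t)d\bigr)\big|_{t=1} = 0,
\]
so both structural hypotheses of Theorem \ref{2} hold uniformly in $\alpha$ on any compact subinterval of the admissible range. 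Part (2) of Theorem \ref{2} then provides a uniform lower bound $\psi_\alpha \geq -C'$; part (1) upgrades any uniform $C^{0}$ bound to a uniform $C^{2,\beta}$ bound; an Arzel\`a--Ascoli and elliptic-bootstrap argument then closes the continuity method at any accumulation point. For uniqueness among $SU(2)$-invariant solutions at a fixed $\alpha$, I would compare two solutions via $\psi=-\log(h_1/h_2)$ and apply the maximum principle: the sign condition $b_\alpha-c_\alpha d\geq 0$ ensures that at an interior extremum of $\psi$ the linearised operator forces $\lvert\phi\rvert_{h_1}^2=\lvert\phi\rvert_{h_2}^2$, hence $\psi\equiv 0$.

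The main obstacle I anticipate is the uniform upper bound on $\psi_\alpha$. The additional hypothesis $de>a$ in Theorem \ref{2}(2) is not automatic here, since $a_\alpha$ contains the term $\tfrac{2\tau\alpha}{(2\pi)^2}(2\lambda-\tfrac{\tau}{2})$ which grows linearly in $\alpha$ when $2\lambda>\tfrac{\tau}{2}$, whereas $de$ is fixed. In regimes where a direct appeal to Theorem \ref{2} is unavailable, I would instead derive the upper bound by integrating \ref{25} over $\Sigma$: the left-hand side integrates to the topological constant $2\pi\deg L$, and the resulting identity, combined with the lower bound on $\psi_\alpha$ and positivity of the denominator in \ref{25}, should yield control of $\sup\psi_\alpha$ via a Moser iteration or a Green's-function estimate tailored to the denominator structure appearing in \ref{25}.
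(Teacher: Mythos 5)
Your reduction of \ref{25} to \ref{1} at $t=1$ and the verification $b-cd=\tfrac{\tau\alpha}{2(2\pi)^2}\geq 0$, $b-(k+c)d=0$ match the paper exactly, and the overall continuity-method skeleton is the right one. However, the two places where you defer to a sketch are precisely where the real work lies, and neither sketch closes.

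First, the upper bound on $\psi_\alpha$. Your fallback of integrating \ref{25} over $\Sigma$ and then running Moser iteration or a Green's-function estimate does not obviously work: the integral identity only fixes $\int i\Theta_{h}$, and in the Green representation $\psi(P)=\overline{\psi}+\int G\,(i\Theta_0-i\Theta_h)$ the dangerous term is $\int(-G)\,i\Theta_h$, which is \emph{not} controlled by the total mass of the nonnegative measure $i\Theta_h$ alone (it can blow up logarithmically if curvature concentrates), and the average $\overline{\psi}$ has no a priori upper bound either (the constraint $\lvert\phi\rvert_h^2\leq\tau$ only bounds $\psi$ from \emph{below}). The paper's actual argument is a clean maximum-principle computation: at the max of $\psi$ one has $i\Theta_h\leq i\Theta_0=\tfrac{\tau-\lvert\phi\rvert_{h_0}^2}{2}\omega_\Sigma$, and if $\sup\psi_n\to\infty$ then $\lvert\phi\rvert_{h_n}^2\to 0$ at the limiting maximum point, forcing $\tfrac{\tau}{2}\omega_\Sigma\geq\tfrac{4\tau f}{a}$, i.e.\ $a\geq 8$; but $a=8+\tfrac{2\tau\alpha}{(2\pi)^2}(2\lambda-\tfrac{\tau}{2})<8$ because $2\lambda-\tfrac{\tau}{2}<0$ in this geometric setup (a fact from \cite{Vamsi1} that also disposes of your worry about the regime $2\lambda>\tfrac{\tau}{2}$, which does not occur). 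You should supply this lemma rather than gesture at an integral method.

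Second, and more seriously, uniqueness. Your claim that at an interior extremum of $g=-\log(h_1/h_2)$ "the sign condition $b_\alpha-c_\alpha d\geq 0$ forces $\lvert\phi\rvert_{h_1}^2=\lvert\phi\rvert_{h_2}^2$" is unsubstantiated and, as stated, false in the needed generality. At a maximum of $g$ one gets $i\Theta_{h_2}\geq i\Theta_{h_1}$ and $\lvert\phi\rvert_{h_2}^2\leq\lvert\phi\rvert_{h_1}^2$, but comparing the two right-hand sides of \ref{25} then involves the prefactor $(\tau-\lvert\phi\rvert^2)$ (which moves the wrong way), the metric-dependent gradient term $\nabla_h^{1,0}\phi\wedge\nabla_h^{0,1}\phi^*$, and the quadratic denominator, and these contributions do not have a common sign. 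This is exactly why the paper does \emph{not} prove uniqueness by a one-shot maximum principle: it runs the continuity path backwards from the second solution in the parameter $\beta\in[0,\alpha]$, proves uniqueness only for some small $\beta_0>0$ by applying the maximum principle to the weighted quantity $\tilde g=g(\gamma+\lvert\phi\rvert_h^2)$ with $h=\int_0^1 h_1 e^{-sg}\,ds$, choosing $\gamma$ large and $\beta_0$ small to make the bad terms subordinate, and using that $\deg L=1$ so that $\phi$ and $\nabla\phi$ cannot vanish simultaneously; it then propagates uniqueness from $\beta_0$ up to $\alpha$ by an open-closed argument. Without something of this nature (or a genuinely new convexity/monotonicity argument), your uniqueness step is a gap, not a proof.
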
  \par
 In \cite{Vamsi3} , Pingali gave the moment map interpretation of Calabi-Yang-Mills equations. For the vortex bundle ansatz, we prove the following. 
 \begin{theorem}
 	The negative of the symplectic form is K\"ahler whenever the Ricci curvature of $f$ is positive, $\tau<\frac{8}{3}$ and $\alpha$ is small.
\end{theorem} 
In \cite{LMOV}, the symplectic form is K\"ahler but here the symplectic form is not always K\"ahler. This phenomenon is happening probably because of the fact that the openness argument in \cite{Vamsi1} is not the standard integration-by-parts argument. The details and the proof are in sub-section \ref{kahlerness} . \par 
	 In a different development \cite{vamsi2}, Pingali introduced the vector bundle Monge-Amp\`ere (vbMA) equation  motivated by a desire to study stability conditions involving higher Chern forms. The vbMA equation for a metric $g'$ on a holomorphic vector bundle $F$ over  a compact complex $n$-dimensional manifold $Y$ is:
	 \begin{equation}
	 \label{26}
	 \left(\frac{i\Theta_{g'}}{2\pi}\right)^{n}=\eta Id,
	 \end{equation} 
	 where $\Theta_{g'}$ is the curvature of the Chern connection of $(F,g')$ and $\eta$ is a given volume form. The vbMA equation for vortex-type bundles akin to above, was studied in \cite{vamsi2}.  The \emph{a priori} estimates proved in \cite{vamsi2} follow as a direct corollary of Theorem \ref{2} (as indicated in Subsection \ref{subsec:vbMA}). \\
\indent Finally, using some results in \cite{vamsi2} and Theorem \ref{2}, we  prove an existence and uniqueness result (Theorem \ref{giesekerresult}) for vortex-type bundles over a product of a Riemann surface and the sphere, relating Gieseker stability and almost Hermitian Einstein metrics. In more detail, let $E$ be a holomorphic vector bundle of rank $r$ over a compact K\"ahler manifold $X$ of dimension $n.$ Suppose $\omega$ is an integral form and therefore defines a line bundle $L'$ on $X$. The almost Hermitian Einstein equation is
	 \begin{equation}
	 \label{17}
	 [e^{(\frac{i}{2\pi}R_{E}+k\omega I_{E})}Td_{X}]^{TOP}=(const)I_{E}\frac{\omega^{n}}{n!},
	 \end{equation}
	 where $Td_{X}$ is the harmonic representative (with respect to $\omega$) of the Todd class. The constant $(const)$ is calculated  by taking the trace and integrating on both sides.
	 \begin{equation}
	 \label{18}
	 \chi(X,E\otimes L'^{k})\coloneqq \int Tr([e^{(\frac{i}{2\pi}R_{E}+k\omega I_{E})}Td_{X}]^{TOP}) =(const)r Vol(X).
	 \end{equation} 
	 So Equation \ref{17} becomes
	 \begin{equation}
	 \label{19}
	 [e^{(\frac{i}{2\pi}R_{E}+k\omega I_{E})}Td_{X}]^{TOP}=\frac{\chi(X,E\otimes L'^{k})}{r Vol(X)}I_{E}\frac{\omega^{n}}{n!}.
	 \end{equation} 
In \cite{Leung}, Leung claimed a general existence result for Equation \ref{17} for large $k$. Our result is not subsumed by Leung's claim because we provide an effective lower bound on $k$, and our result is equivariant in the sense that Gieseker stability only needs to be checked for $SU(2)$-invariant subbundles. Moreover, we prove uniqueness in the space of $SU(2)$- invariant solutions. The precise statement and proof of Theorem \ref{giesekerresult} is in Section \ref{gieskersec}. \\

\emph{Acknowledgements} : I thank my advisor, Vamsi Pritham Pingali, for suggesting this problem to me and for his constant encouragement. He helped me to correct several mistakes and make the paper more readable. The author is supported by a scholarship from the Indian Institute of Science. Lastly, the author is immensely thankful to the anonymous referee for their careful reading of the manuscript and useful suggestions.  
	\section{Proof of Theorem \ref{2}}
	In this section, we prove our main Theorem \ref{2}. Firstly, we have the following lemma which is useful in proving the desired estimates. 
	\begin{lemma}
		\label{3}
		If $b-cd\geq 0,$ then $\lvert \phi\rvert_{h_{t,\alpha}}^{2}\leq d$, for $0 \leq t \leq 1$.
	\end{lemma}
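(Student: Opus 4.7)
The plan is to use the maximum principle on $|\phi|^2_h$. Let $p\in\Sigma$ be a point where $M:=|\phi|^2_h$ attains its maximum. Since $\phi\not\equiv 0$ we have $\phi(p)\neq 0$, and because $L$ is a line bundle, the critical-point condition $\partial |\phi|^2_h(p)=h(\nabla^{(1,0)}\phi,\phi)(p)=0$ forces $\nabla^{(1,0)}\phi(p)=0$. A direct Chern-connection computation at any critical point gives $i\partial\bar\partial|\phi|^2_h\big|_p=-|\phi|^2_h(p)\,i\Theta_h(p)$, so the max-principle inequality $i\partial\bar\partial|\phi|^2_h\big|_p\leq 0$ together with $|\phi|^2_h(p)>0$ yields $i\Theta_h(p)\geq 0$ as a $(1,1)$-form.

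Substituting $\nabla^{(1,0)}\phi(p)=0$ into Equation~(1.1) at $p$ reduces the RHS to
\[
(d-M)\,\frac{e\,u_\alpha^{1-t}\,\omega_\Sigma(p)}{a+btM-ct^2 M^2}.
\]
The numerator factor is strictly positive. The key observation is that the hypothesis $b-cd\geq 0$ together with $0\leq t\leq 1$ forces the denominator-polynomial $P(s):=a+bts-ct^2 s^2$ to be bounded below by $a>0$ on $[0,d]$: indeed $P(0)=a$, $P(d)=a+td(b-ctd)\geq a+td(b-cd)\geq a$, and $P$ is concave (or linear), so its minimum on $[0,d]$ is attained at an endpoint. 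Hence if we already knew $M\leq d$, then $P(M)>0$ and the sign equation at $p$ would directly give $(d-M)\geq 0$.

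The main obstacle is ruling out the possibility $M>d$. I would do this by a continuity/IVT argument on $\Sigma$. If $M>d$, then from $i\Theta_h(p)\geq 0$, positivity of the numerator, and $(d-M)<0$ one sees $P(M)\leq 0$. To produce a point where $P(|\phi|^2_h)>0$, I would look at the minimum $m:=\min_\Sigma|\phi|^2_h$: either $m\leq d$, in which case $P(m)>0$ by the previous paragraph; or $m>d$, in which case at a minimum point $q$ the dual argument (min principle giving $i\Theta_h(q)\leq 0$ and $\nabla^{(1,0)}\phi(q)=0$) forces $P(m)>0$ by the same sign analysis. Either way $P(|\phi|^2_h)$ is positive somewhere and non-positive at $p$, so by continuity of $|\phi|^2_h$ and of $P$ it vanishes at some point of $\Sigma$; but there the RHS of (1.1) has zero denominator and nonzero numerator, contradicting smoothness of $i\Theta_h$. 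This contradiction gives $M\leq d$.
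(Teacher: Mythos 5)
Your proposal is correct and follows the same core strategy as the paper: evaluate at a maximum point $p$ of $\lvert\phi\rvert_{h}^{2}$, use the identity $\partial\bar{\partial}\lvert\phi\rvert_{h}^{2}=-\Theta_{h}\lvert\phi\rvert_{h}^{2}+\nabla^{(1,0)}\phi\wedge\nabla^{(0,1)}\phi^{*}$ to deduce $\nabla^{(1,0)}\phi(p)=0$ and $i\Theta_{h}(p)\geq 0$, and then read off the sign of $d-\lvert\phi\rvert_{h}^{2}(p)$ from the equation. The one genuine difference is that the paper simply asserts that under $b-cd\geq 0$ the conclusion $d-\lvert\phi\rvert_{h}^{2}(p)\geq 0$ is ``clear,'' which tacitly presumes the denominator is positive at $p$; you correctly notice that the pointwise sign analysis alone does not exclude the scenario $\lvert\phi\rvert_{h}^{2}(p)>d$ with a negative denominator, and you close that loophole with an intermediate value argument (the denominator is positive wherever $\lvert\phi\rvert_{h}^{2}\leq d$ and would have to be negative at $p$, so it would vanish somewhere, where the strictly positive numerator makes the equation singular). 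This is a legitimate and slightly more careful rendering of the same proof; no gap.
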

\begin{proof}
	We have the following identity\begin{equation}
	\label{4}	
	\partial\bar{\partial}\lvert\phi\rvert_{h_{t,\alpha}}^{2}=-\Theta_{h_{t,\alpha}}\lvert\phi\rvert_{h_{t,\alpha}}^{2}+\nabla_{t,\alpha}^{(1,0)}\phi\wedge\nabla_{t,\alpha}^{(0,1)}\phi^{*}.\end{equation}
	At the maximum point of $\lvert\phi\rvert_{h_{t,\alpha}}^{2}$(say $p$), we have $i\partial\bar{\partial}\lvert\phi\rvert_{h_{t,\alpha}}^{2}\leq 0$ and $\nabla_{t,\alpha}\lvert\phi\rvert_{h_{t,\alpha}}^{2}=0$. Therefore $\nabla_{t,\alpha}\phi(p)=0$ since $\phi$ is not identically zero. That gives us $i\Theta_{h_{t,\alpha}}(p)\geq 0$. Now we can write the denominator of equation \ref{1} as $a+t\lvert\phi\rvert_{h_{t,\alpha}}^{2}(b-ct\lvert\phi\rvert_{h_{t,\alpha}}^{2})$. Under the hypothesis, it is clear that $d-\lvert\phi\rvert_{h_{t,\alpha}}^{2}(p)\geq0$. Hence $\lvert\phi\rvert_{h_{t,\alpha}}^{2}(x)\leq\lvert\phi\rvert_{h_{t,\alpha}}^{2}(p)\leq d$.
\end{proof}
\vspace*{2mm}
From now onwards, we suppress the dependence of $\psi_{t,\alpha}$ on $t$ and $\alpha$.
\begin{lemma}
	If $\lVert\psi\rVert_{C^{1}}\leq C$, then $\lVert\psi\rVert_{C^{2,\beta}}\leq C$.
\end{lemma}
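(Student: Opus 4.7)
My strategy is to recognise the equation as a second-order scalar elliptic PDE on the Riemann surface $\Sigma$ in the unknown $\psi$, and then run the standard $L^{p}$/Schauder bootstrap. Writing $h_{\alpha,t}=h_{\alpha,t=0}e^{-\psi}$, we have $i\Theta_{h_{\alpha,t}}=i\Theta_{0}+i\partial\bar\partial\psi$, so dividing \eqref{1} by $\omega_{\Sigma}$ (which makes sense since $\dim_{\mathbb{C}}\Sigma=1$) produces an equation of the form
\[
\Delta_{\omega_{\Sigma}}\psi \;=\; F\bigl(x,\psi,\nabla\psi\bigr),
\]
where $F$ depends on $\psi$ through $|\phi|_{h}^{2}=|\phi|^{2}e^{-\psi}$ and on $\nabla\psi$ through the Chern connection terms in $\nabla^{(1,0)}\phi\wedge\nabla^{(0,1)}\phi^{*}$; on a chosen local trivialisation $F$ is a smooth function of its arguments.

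Next I would verify that $F$ is genuinely controlled when $\|\psi\|_{C^{1}}$ is bounded. The denominator in \eqref{1} can be rewritten as $a+t|\phi|_{h}^{2}\bigl(b-ct|\phi|_{h}^{2}\bigr)$; using $t\leq 1$ together with Lemma \ref{3} (which is applicable since $b-cd\geq 0$) this quantity is bounded below by $a+t|\phi|_{h}^{2}(b-cd)\geq a>0$. Hence the denominator stays uniformly positive, and both numerator and denominator are smooth functions of $(x,\psi,\nabla\psi)$. A pointwise $C^{1}$-bound on $\psi$ therefore gives a uniform $L^{\infty}$-bound on $F$.

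Given this, I would invoke standard $L^{p}$ elliptic regularity: $\Delta\psi\in L^{\infty}\subset L^{p}$ for every $p<\infty$, so $\psi\in W^{2,p}(\Sigma)$ with norm controlled by $\|F\|_{L^{p}}+\|\psi\|_{L^{p}}$. Choosing $p$ large and applying Sobolev embedding on $\Sigma$ yields a uniform bound on $\|\psi\|_{C^{1,\alpha}}$ for any $\alpha\in(0,1)$. Plugging this back into $F$, the regularity of $F$ in its arguments forces $F\in C^{0,\alpha}$ with a uniform $C^{0,\alpha}$-bound. A Schauder estimate then upgrades this to $\|\psi\|_{C^{2,\alpha}}\leq C$, and taking $\alpha=\beta$ completes the argument.

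The whole proof is a routine elliptic bootstrap; the only genuinely equation-specific point that requires care is the positivity of the denominator, which is precisely where the hypothesis $b-cd\geq 0$ together with the earlier Lemma \ref{3} gets used. No other part of the argument uses structural information about the particular form of \eqref{1}; in particular, the fact that $\Sigma$ is one-complex-dimensional is essential because it lets us treat the curvature equation as a semilinear scalar PDE (the second-order part is simply the Laplacian), so no concavity structure on the nonlinearity is needed beyond what is already in hand.
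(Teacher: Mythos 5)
Your proposal is correct and follows essentially the same route as the paper: bound the right-hand side in $C^{0}$ using the $C^{1}$ hypothesis and Lemma \ref{3} (with the denominator bounded below by $a>0$), then bootstrap via $L^{p}$ regularity, Sobolev embedding to $C^{1,\beta}$, and Schauder estimates. Your write-up merely spells out the details that the paper leaves implicit.
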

\begin{proof}
	Using the hypothesis and lemma \ref{3}, we see that the right-hand side of \ref{1} is uniformly bounded in $C^{0}$. Therefore, by $L^{p}$ regularity of elliptic equations, $\psi$ is bounded uniformly in $W^{2,p}$ for all large $p$. Using the Sobolev embedding theorem, we see that $\lVert\psi\rVert_{C^{1,\beta}}\leq C$. Thus the right-hand side is in $C^{0,\beta}$. Now using Schauder estimates, we are done.
\end{proof}
\vspace*{2mm}
The following lemma completes the proof of the first part of \ref{2}.
\begin{lemma}
	If $\lVert\psi\rVert_{C^{0}}\leq C,$ then $\lVert\psi\rVert_{C^{1}}\leq C$.
\end{lemma}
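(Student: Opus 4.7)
The plan is to recast \eqref{1} as a scalar quasilinear elliptic PDE with quadratic gradient nonlinearity, and then derive a gradient bound from the $C^0$ bound via a blow-up argument combined with a Liouville-type theorem. For the scalar reduction, I would use the identity \eqref{4} to replace $\nabla^{(1,0)}\phi\wedge\nabla^{(0,1)}\phi^{*}$ in \eqref{1} by $\partial\bar\partial|\phi|_h^{2}+\Theta_h|\phi|_h^{2}$. Substituting $h=h_0 e^{-\psi}$, expanding $i\partial\bar\partial(e^{-\psi}|\phi|_{h_0}^{2})$ in a local chart, and collecting all $i\partial\bar\partial\psi$ terms on one side, a direct computation shows that the coefficient of $i\partial\bar\partial\psi$ simplifies to precisely the original denominator $a+bt|\phi|_h^{2}-ct^{2}|\phi|_h^{4}$; by Lemma \ref{3} and the hypothesis $b-cd\ge 0$, this coefficient lies in $[a,C']$ for some constant $C'$. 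Writing $i\partial\bar\partial\psi=\tfrac{1}{2}(\Delta_{\omega_\Sigma}\psi)\omega_\Sigma$, the equation becomes
\[\Delta_{\omega_\Sigma}\psi=\tilde H_1(x,\psi)|\nabla\psi|^{2}+\tilde H_2(x,\psi)\cdot\nabla\psi+\tilde H_0(x,\psi),\]
with $\tilde H_i$ continuous in $(x,\psi)$ and uniformly bounded once $\|\psi\|_{C^0}$ is bounded; in particular $|\Delta_{\omega_\Sigma}\psi|\le C(1+|\nabla\psi|^{2})$.

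For the gradient estimate itself, I would argue by contradiction. Suppose there is a sequence $(\alpha_n,t_n)$ with solutions $\psi_n$ satisfying $\|\psi_n\|_{C^0}\le C$ but $\lambda_n:=\max_\Sigma|\nabla\psi_n|\to\infty$. Let $p_n$ be a point realising the maximum; after passing to a subsequence, $p_n\to p_\infty$, $(\alpha_n,t_n)\to(\alpha_\infty,t_\infty)$, $\psi_n(p_n)\to\psi_\infty$. In normal coordinates around $p_\infty$, define $\tilde\psi_n(w):=\psi_n(p_n+w/\lambda_n)-\psi_n(p_n)$. Then $|\tilde\psi_n|\le 2C$, $|\nabla\tilde\psi_n|\le 1$ with equality at $w=0$, and the rescaled equation reads
\[\Delta_{g_n}\tilde\psi_n=\tilde H_1\bigl(p_n+w/\lambda_n,\,\psi_n(p_n)+\tilde\psi_n\bigr)|\nabla\tilde\psi_n|^{2}+O(\lambda_n^{-1}),\]
with $g_n$ converging to the Euclidean metric. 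By standard $L^p$-regularity and Sobolev embedding in real dimension $2$, a subsequence converges in $C^{1,\beta}_{loc}(\mathbb{R}^{2})$ to a bounded $\tilde\psi$ satisfying $|\nabla\tilde\psi(0)|=1$ and $\Delta\tilde\psi=h(\tilde\psi)|\nabla\tilde\psi|^{2}$, where $h(s):=\tilde H_1(p_\infty,\psi_\infty+s;\alpha_\infty,t_\infty)$ is continuous.

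To finish, set $G(s):=\int_0^s \exp\!\bigl(-\int_0^\sigma h(\tau)\,d\tau\bigr)\,d\sigma$; this $G$ is smooth and strictly increasing, and the function $v:=G(\tilde\psi)$ satisfies $\Delta v=0$ on $\mathbb{R}^{2}$. Since $\tilde\psi$ is bounded, so is $v$, so Liouville's theorem forces $v$, and hence $\tilde\psi$, to be constant, contradicting $|\nabla\tilde\psi(0)|=1$.

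The main technical hurdle is Step 1: one must carefully verify that after applying \eqref{4} and the substitution $h=h_0 e^{-\psi}$, the coefficient of $i\partial\bar\partial\psi$ really does collapse to the denominator of \eqref{1} (so that ellipticity is preserved with a uniform lower bound $a$), and that the remaining nonlinearity in $\nabla\psi$ is indeed only quadratic with coefficients depending on $(x,\psi)$ alone. Once the scalar form is established, the blow-up and Liouville steps are a standard two-dimensional package.
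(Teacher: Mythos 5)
Your proposal is correct, and it shares the paper's overall skeleton: argue by contradiction, rescale at a point where $\lvert\nabla\psi_{n}\rvert=M_{n}\to\infty$, obtain a locally bounded Laplacian, extract a limit on $\mathbb{C}$, and contradict $\lvert\nabla\tilde{\psi}\rvert(0)=1$ via a Liouville-type theorem. You diverge from the paper in two genuine ways. First, you justify the bound $\lvert\Delta\psi\rvert\le C(1+\lvert\nabla\psi\rvert^{2})$ by an explicit scalar reduction: substituting the identity \ref{4} into \ref{1} and expanding $i\partial\bar{\partial}(e^{-\psi}\lvert\phi\rvert_{h_{0}}^{2})$, the two $i\partial\bar{\partial}\psi$ contributions on the right-hand side (one from $\Theta_{h}\lvert\phi\rvert_{h}^{2}$, one from $\partial\bar{\partial}\lvert\phi\rvert_{h}^{2}$, with coefficients $+tk(d-\lvert\phi\rvert_{h}^{2})\lvert\phi\rvert_{h}^{2}$ and $-tk(d-\lvert\phi\rvert_{h}^{2})\lvert\phi\rvert_{h}^{2}$) cancel, so the coefficient of $i\partial\bar{\partial}\psi$ is exactly the denominator $a+bt\lvert\phi\rvert_{h}^{2}-ct^{2}\lvert\phi\rvert_{h}^{4}$, which lies in $[a,C']$ by Lemma \ref{3}; this computation checks out, and the remaining nonlinearity is indeed at most quadratic in $\nabla\psi$ with coefficients controlled by the $C^{0}$ bound. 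The paper instead keeps $\nabla^{(1,0)}\phi\wedge\nabla^{(0,1)}\phi^{*}$ intact and observes that after rescaling by $M_{n}^{-2}$ this term is controlled because $\lvert\tilde{d}\tilde{\psi}_{n}\rvert\le 1$; your version makes that step more transparent. Second, your Liouville step is different: you remove the quadratic nonlinearity from the limit equation $\Delta\tilde{\psi}=h(\tilde{\psi})\lvert\nabla\tilde{\psi}\rvert^{2}$ with the integrating factor $v=G(\tilde{\psi})$, producing a bounded harmonic function on $\mathbb{R}^{2}$, whereas the paper notes that only the nonnegative term $it_{\infty}k\,\tilde{\nabla}^{(1,0)}\phi\wedge\tilde{\nabla}^{(0,1)}\phi^{*}$ survives in the limit, so $\tilde{\psi}$ is subharmonic and bounded above, hence constant. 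Your route uses only the continuity of the quadratic coefficient and not its sign, so it is slightly more robust (it would survive if $k$ were allowed to be negative), at the cost of needing the two-sided $C^{0}$ bound rather than just the upper bound in the Liouville step; both arguments are complete.
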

\begin{proof}
	To arrive at a contradiction, we assume that there exists a sequence $\psi_{n}$ (corresponding to $t_{n}$) such that $\lvert d\psi_{n}\rvert=\lvert d\psi_{n}(p_{n})\rvert=M_{n} \to \infty.$ We may assume $p_{n}\to p$(upto some subsequence). Now choose large enough $n$ so that $p_{n},p$ lie in a coordinate ball $B$ centered at $p$ with coordinate $z$(with $z=0$ corresponding to $p$). Define $\tilde{\psi_{n}}(\tilde{z})=\psi_{n}(p_{n}+\frac{\tilde{z}}{M_{n}})$. Now $\lvert \tilde{d}\tilde{\psi_{n}}\rvert \leq 1=\lvert\tilde{d} \tilde{\psi_{n}}\rvert(0)$. Note that
	\[\frac{\partial\tilde{\psi_{n}}}{\partial\tilde{z}}=\frac{1}{M_{n}}\frac{\partial\psi_{n}}{\partial z},\frac{\partial\tilde{\psi_{n}}}{\partial\bar{\tilde{z}}}=\frac{1}{M_{n}}\frac{\partial\psi_{n}}{\partial \bar{z}}\] 
 \begin{equation}
 \label{5}
 \frac{\partial^{2}\tilde{\psi_{n}}}{\partial\tilde{z}\partial\bar{\tilde{z}}}=\frac{1}{M_{n}^{2}}\frac{\partial^{2}\psi_{n}}{\partial z \partial\bar{z}}.\end{equation} 
 Now using \ref{5} and \ref{1}, we get
 \[i\Theta_{0}+i\frac{\partial^{2}\psi_{n}}{\partial z \partial\bar{z}}dz\wedge d\bar{z}=(d-\lvert\phi\rvert_{h_{n}}^{2})\frac{eu^{1-t_{n}}\chi+it_{n}k\nabla_{n}^{(1,0)}\phi\wedge\nabla_{n}^{(0,1)}\phi^{*}}{a+bt_{n}\lvert\phi\rvert_{h_{n}}^{2}-ct_{n}^{2}\lvert\phi\rvert_{h_{n}}^{4}}.\]
 We abuse notation from this point onwards and denote the functions $\frac{\chi}{idz\wedge d\bar{z}}=\frac{\Theta_{0}}{dz\wedge d\bar{z}}$ by $\chi$ and $\frac{\nabla_{n}^{(1,0)}\phi\wedge\nabla_{n}^{(0,1)}\phi^{*}}{dz\wedge d\bar{z}}$ by $\tilde{\nabla}_{n}^{(1,0)}\phi\wedge\tilde{\nabla}_{n}^{(0,1)}\phi^{*}$. \par
 So the above equation becomes
 \begin{equation}
 \label{6}
\frac{\chi}{M_{n}^{2}}+i\frac{\partial^{2}\tilde{\psi_{n}}}{\partial\tilde{z}\partial\bar{\tilde{z}}}=(d-\lvert\phi\rvert_{h_{n}}^{2})\frac{\frac{eu^{1-t_{n}}\chi}{M_{n}^{2}}+it_{n}k\tilde{\nabla}_{n}^{(1,0)}\phi\wedge\tilde{\nabla}_{n}^{(0,1)}\phi^{*}}{a+bt_{n}\lvert\phi\rvert_{h_{n}}^{2}-ct_{n}^{2}\lvert\phi\rvert_{h_{n}}^{4}}.
 \end{equation}
  Now the denominator in \ref{6} can be written as $a+t_{n}\lvert\phi\rvert_{h_{n}}^{2}(b-ct_{n}\lvert\phi\rvert_{h_{n}}^{2})$ and this shows that the denominator is bounded below by $a$ because $t_{n}\lvert\phi\rvert_{h_{n}}^{2}\leq \lvert\phi\rvert_{h_{n}}^{2}\leq d\leq\frac{b}{c}$. On a coordinate ball $B_{r}(0)$ in the $\tilde{z}$ coordinates, we have $\lvert\tilde{d}\tilde{\psi_{n}}\rvert \leq 1.$ Using \ref{6}, we conclude that $\lvert\tilde{\Delta}\tilde{\psi_{n}}\rvert\leq C$ on $B_{r}(0)$. Therefore, by interior $L^{p}$ regularity and the Sobolev embedding, we see that $\lVert\tilde{\psi_{n}}\rVert_{C^{1,\beta}(B_{0.7r}(0))}\leq C.$ Thus by the interior Schauder estimates $\lVert\tilde{\psi_{n}}\rVert_{C^{2,\beta}(B_{0.5r}(0))}\leq C.$ Suppose $\lVert\tilde{\psi_{n}}\rVert_{C^{2,\beta}(B_{0.5r}(0))}\leq C_{r}$ for some fixed $\beta>0$. For every fixed $r$, a subsequence of $\tilde{\psi_{n}}$ converges in $C^{2,\gamma}(B_{0.5r}(0))$ to a function $\tilde{\psi_{r}}$ for a fixed $\gamma<\beta.$   Choosing a diagonal subsequence, we may assume that for all $r$, we have a single function $\tilde{\psi}.$ Now it is easy to see using \ref{6} that $i\frac{\partial^{2}\tilde{\psi}}{\partial\tilde{z}\partial\bar{\tilde{z}}}\geq 0$. But a subharmonic function on $\mathbb{C}$ cannot be bounded above unless it is a constant. Hence $\tilde{\psi}$ is a constant. But this contradicts the fact that $\lvert\tilde{d}\tilde{\psi}\rvert(0)=1$. Hence $\lvert\nabla \psi\rvert \leq C$, thus implying a $C^{1}$ estimate.
\end{proof} 
\vspace*{3mm}

To prove the second part of theorem \ref{2}, that is the $C^{0}$ estimate, we need the following form of the Green representation formula. Let $G$ be a Green function of the background metric $\chi$ such that $-C[1+\lvert ln(d_{\chi}(P,Q))\rvert]\leq G(P,Q)\leq 0$. Then any function $f$ satisfies the following equation
\begin{equation}
\label{7}
f(Q)=\frac{\int f\chi}{\int \chi}+\int_{\Sigma}^{}G(P,Q)i\partial\bar{\partial}f(P).
\end{equation}
Now we prove a lower bound on $\psi$.
\begin{lemma}
	\label{8}
	If $b-(k+ct)d\geq 0,$ then the function $\psi$ satisfies $\psi\geq -C,$ where $C$ is independent of $t$. 
\end{lemma}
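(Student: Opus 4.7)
The approach combines the Green representation formula \ref{7} with a Moser-type iteration. First I would use identity \ref{4} to substitute $i\nabla^{(1,0)}\phi\wedge\nabla^{(0,1)}\phi^{*}=i\partial\bar\partial s+s\cdot i\Theta_{h_{\alpha,t}}$ (with $s=|\phi|^{2}_{h_{\alpha,t}}$) into equation \ref{1}. After collecting the $s\cdot i\Theta_{h_{\alpha,t}}$ contributions on the left, this yields
\[
\tilde Q(s)\, i\Theta_{h_{\alpha,t}}=(d-s)\bigl[eu^{1-t}\omega_{\Sigma}+tk\, i\partial\bar\partial s\bigr],\qquad \tilde Q(s):=a+ts(b-kd)+ts^{2}(k-ct).
\]
Evaluating $\tilde Q$ at $s=0$ and $s=d$, and exploiting the hypothesis $b\geq(k+ct)d$ (so that $b-kd\geq ctd$ and $b-ctd\geq kd$), one finds $\tilde Q(s)\geq a>0$ throughout $[0,d]$. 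Thus the equation can be rewritten as $i\partial\bar\partial\psi=A(s)\omega_{\Sigma}+B(s)\,i\partial\bar\partial s-i\Theta_{0}$ with $A,B\in C^{\infty}([0,d])$ both bounded and nonnegative.

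Two basic estimates then follow. Every factor on the right-hand side of \ref{1} is a nonnegative $(1,1)$-form (using $d-s\geq 0$ and $D(s)\geq a>0$ from Lemma \ref{3}, $eu^{1-t}\omega_{\Sigma}\geq 0$, and $i\nabla^{(1,0)}\phi\wedge\nabla^{(0,1)}\phi^{*}\geq 0$), so $i\partial\bar\partial\psi\geq -i\Theta_{0}\geq -C_{1}\omega_{\Sigma}$, i.e.\ $\psi$ is $C_{1}\omega_{\Sigma}$-plurisubharmonic. Independently, $|\phi|^{2}_{h_{\alpha,t}}\leq d$ rewrites as $\psi\geq\log|\phi|^{2}_{h_{0}}-\log d$ pointwise, and integrating against $\omega_{\Sigma}/\mathrm{Vol}(\Sigma)$ gives $\bar\psi\geq -C_{0}$, since $\log|\phi|^{2}_{h_{0}}$ has only integrable logarithmic singularities at the zeros of $\phi$.

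Finally, $\sup\psi\geq\bar\psi\geq -C_{0}$ together with the Moser--Trudinger inequality for $C_{1}\omega_{\Sigma}$-plurisubharmonic functions on $(\Sigma,\omega_{\Sigma})$ yields a uniform bound $\int_{\Sigma}e^{-\beta\psi}\omega_{\Sigma}\leq C_{2}$ for some $\beta>0$, whence $\|(-\psi)_{+}\|_{L^{p}(\omega_{\Sigma})}\leq C_{3}(p)$ for every $p\geq 1$. Since $-\psi$ is a subsolution of $\Delta_{\omega_{\Sigma}}(-\psi)\leq C_{1}$, standard Moser iteration on $\Sigma$ delivers $\sup(-\psi)\leq C'$, i.e.\ $\psi\geq -C'$, uniformly in $t$. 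The main technical obstacle is ensuring the Moser--Trudinger and Moser iteration constants depend only on the background $(\Sigma,\omega_{\Sigma},\phi)$ and on $(a,b,c,d,e,k)$ continuously in $\alpha$, so that on any compact interval $[\alpha_{0},\alpha_{1}]$ the bound $C'_{\alpha}$ is uniform as asserted.
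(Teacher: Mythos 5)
Your first two steps are sound and track the paper closely: the rearrangement $\tilde Q(s)\,i\Theta_{h}=(d-s)[eu^{1-t}\omega_{\Sigma}+tk\,i\partial\bar\partial s]$ with $\tilde Q\geq a>0$ on $[0,d]$ is essentially the paper's key inequality in disguise, and the bound $\bar\psi\geq -C_{0}$ from $|\phi|_{h}^{2}\leq d$ and the integrability of $\log|\phi|_{h_{0}}^{2}$ is exactly the paper's first step. The gap is in the final step. From $i\Theta_{h}\geq 0$ you only obtain the one-sided bound $\Delta_{\omega_{\Sigma}}\psi\geq -C_{1}$, i.e.\ $\psi$ is quasi-plurisubharmonic. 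This makes $-\psi$ a \emph{supersolution} of $\Delta u=C_{1}$, not a subsolution: Moser iteration (the local maximum principle) bounds $\sup u$ by $\|u^{+}\|_{L^{p}}$ only for functions satisfying $\Delta u\geq -C$, which for $u=-\psi$ would require an \emph{upper} bound $\Delta\psi\leq C$ that you never establish. A lower Laplacian bound plus exponential integrability can never yield a pointwise lower bound: the function $\log|s|_{h_{0}}^{2}$ for a holomorphic section $s$ is $\omega$-psh, bounded above, has bounded average and satisfies $\int e^{-\beta\log|s|^{2}}\omega_{\Sigma}<\infty$ for small $\beta$, yet its infimum is $-\infty$. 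So your route, as written, cannot close, and tellingly the hypothesis $b-(k+ct)d\geq 0$ plays no role in the step that is supposed to produce the conclusion.

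The missing ingredient is the one the paper supplies: an upper bound on $i\Theta_{h}$ of the form $a\,i\Theta_{h}\leq deu^{1-t}\omega_{\Sigma}+ktd\,i\partial\bar\partial|\phi|_{h}^{2}$, with a \emph{constant} coefficient in front of $i\partial\bar\partial|\phi|_{h}^{2}$ (this is where $b-(k+ct)d\geq 0$ is genuinely used). The term $i\partial\bar\partial|\phi|_{h}^{2}$ is not bounded above pointwise (it contains $|\nabla_{h}\phi|^{2}$, which depends on $\partial\psi$), but because its coefficient is constant one can integrate the whole upper bound against the nonpositive Green function: the inequality reverses, $\int G\,i\partial\bar\partial|\phi|_{h}^{2}$ evaluates to $|\phi|_{h}^{2}(P)-\overline{|\phi|_{h}^{2}}$ via the Green representation formula, and the remaining term is controlled by $\int|G|\,\omega_{\Sigma}<\infty$. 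Your decomposition $i\partial\bar\partial\psi=A\omega_{\Sigma}+B\,i\partial\bar\partial s-i\Theta_{0}$ has the right shape but a non-constant $B(s)$, which blocks this Green-function evaluation; you would need to first bound $d-s$ by $d$ in the numerator and absorb the $s\,i\Theta_{h}$ term into the denominator as the paper does.
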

\begin{proof}
	Firstly note that $\lvert\phi\rvert_{h}^{2}\leq d$. That is $\lvert\phi\rvert_{h_{0}}^{2}\leq e^{\psi+d'}$(where $e^{d'}=d$). Therefore 
	\begin{equation}
	\label{9}
	\int\psi\chi\geq \int ln(\lvert\phi\rvert_{h_{0}}^{2})\chi-\int d'\chi.
	\end{equation}
	Now 
	\[
	\psi(P)\geq \frac{\int ln(\lvert\phi\rvert_{h_{0}}^{2})\chi}{\int \chi}-d'+\int_{\Sigma}G(Q,P)i\partial\bar{\partial}\psi(Q)
	\]
	\begin{equation}
	\label{10}
	\psi(P)\geq \frac{\int ln(\lvert\phi\rvert_{h_{0}}^{2})\chi}{\int \chi}-d'+\int_{\Sigma}G(Q,P)i\Theta_{h}(Q)-\int G(Q,P)\chi.
	\end{equation}
	From \ref{1} we get
	\[i\Theta_{h}\leq d \frac{eu^{1-t}\chi+itk\nabla^{(1,0)}\phi\wedge\nabla^{(0,1)}\phi^{*}}{a+tb\lvert\phi\rvert_{h}^{2}-ct^{2}\lvert\phi\rvert_{h}^{2}(\lvert\phi\rvert_{h}^{2}-d)-dct^{2}\lvert\phi\rvert_{h}^{2}}\]
	\vspace{1mm}
	\[\implies i\Theta_{h}\leq d \frac{eu^{1-t}\chi+itk\nabla^{(1,0)}\phi\wedge\nabla^{(0,1)}\phi^{*}}{a+t\lvert\phi\rvert_{h}^{2}(b-ctd)}.\] 
	
	Now $b-ctd\geq 0$ because $b-cd\geq 0$. Therefore using \ref{4} , we have
	\[i\Theta_{h}(a+t\lvert\phi\rvert_{h}^{2}(b-ctd))\leq deu^{1-t}\chi+iktd\partial\bar{\partial}\lvert\phi\rvert_{h}^{2}+iktd\lvert\phi\rvert_{h}^{2}\Theta_{h}\]	
	\[\implies i\Theta_{h}(a+t\lvert\phi\rvert_{h}^{2}(b-ctd-kd))\leq deu^{1-t}\chi+ iktd\partial\bar{\partial}\lvert\phi\rvert_{h}^{2}.\]
	We need the hypothesis for the following inequality to hold.
	\[ i\Theta_{h} a\leq  i\Theta_{h}(a+t\lvert\phi\rvert_{h}^{2}(b-ctd-kd))\leq deu^{1-t}\chi+ iktd\partial\bar{\partial}\lvert\phi\rvert_{h}^{2}\]
	\begin{equation}
	\label{11}	
		\implies G(Q,P)i\Theta_{h}\geq G(Q,P)\frac{deu^{1-t}\chi+ iktd\partial\bar{\partial}\lvert\phi\rvert_{h}^{2}}{a}.
	\end{equation}	
	Using \ref{10} and \ref{11} we get
	\[\psi(P)\geq \frac{\int ln(\lvert\phi\rvert_{h_{0}}^{2})\chi}{\int \chi}-d'-\int G(Q,P)\chi+\int G(Q,P)\frac{deu^{1-t}\chi+ iktd\partial\bar{\partial}\lvert\phi\rvert_{h}^{2}}{a}(Q).\]
	Now using Green representation formula \ref{7} we get
	\[\implies \psi(P)\geq \frac{\int ln(\lvert\phi\rvert_{h_{0}}^{2})\chi}{\int \chi}-d'-\int G(Q,P)\chi+\int G(Q,P)\frac{deu^{1-t}\chi}{a}+\frac{ktd}{a}(\lvert\phi\rvert_{h}^{2}(P)-\frac{\int\lvert\phi\rvert_{h}^{2}\chi}{\int\chi})\geq -C.\]
	Hence, we are done.
	
\end{proof}
Now we prove an upper bound on $\psi$. 
\begin{lemma}
	\label{12}
	If $de>a$ and $i\Theta_{0}=\chi,$ then $\psi\leq C,$ where $C$ is independent of $t$.
\end{lemma}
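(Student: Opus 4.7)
The plan is to argue by the maximum principle at a point $P$ where $\psi$ attains its maximum, mirroring the strategy of Lemma \ref{3}. At such a $P$, $i\partial\bar\partial\psi(P)\leq 0$, so the identity $i\Theta_{h}=i\Theta_{0}+i\partial\bar\partial\psi$ together with the hypothesis $i\Theta_{0}=\omega_{\Sigma}$ yields $i\Theta_{h}(P)\leq \omega_{\Sigma}(P)$ as $(1,1)$-forms, and hence (after writing everything as a scalar multiple of $\omega_{\Sigma}$, as in the proof of the $C^{1}$ estimate) as scalars at $P$.

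I would then substitute this into \eqref{1} and drop the nonnegative $(1,1)$-form $itk\nabla^{(1,0)}\phi\wedge\nabla^{(0,1)}\phi^{*}$ from the numerator, which only weakens the resulting inequality. Lemma \ref{3} applies since $b-cd\geq 0$, keeping the denominator positive, and the term $ct^{2}|\phi|_{h}^{4}$ there is nonnegative and can also be discarded. Rearranging, one obtains
\[
|\phi|_{h}^{2}(P) \;\geq\; \frac{de\,u^{1-t}(P)-a}{bt+eu^{1-t}(P)}.
\]

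The heart of the proof is to show this lower bound is a positive constant uniformly in $t$, using the hypothesis $de>a$. Since $u=a/[e(d-|\phi|_{h_{0}}^{2})]\geq a/(ed)$ and the map $x\mapsto x^{1-t}$ is nondecreasing on $(0,\infty)$ for $t\in[0,1]$, a short computation gives
\[
de\,u^{1-t}(P)-a \;\geq\; a\bigl[(de/a)^{t}-1\bigr],
\]
which is strictly positive whenever $t>0$. The denominator stays uniformly bounded above because $\sup_{\Sigma}|\phi|_{h_{0}}^{2}<d$ keeps $u$ bounded. Hence $|\phi|_{h}^{2}(P)\geq \epsilon>0$ uniformly in $t$ along any portion of the continuity path bounded away from $t=0$. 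Finally, writing $|\phi|_{h}^{2}(P)=|\phi|_{h_{0}}^{2}(P)e^{-\psi(P)}$ and using the global bound on $|\phi|_{h_{0}}^{2}$ produces $\psi\leq \psi(P)\leq \log\bigl((\sup|\phi|_{h_{0}}^{2})/\epsilon\bigr)$, the desired upper bound.

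The main obstacle is uniformity in $t$: the factor $(de/a)^{t}-1$ vanishes as $t\to 0$, so the bound degenerates at that end. Since the path in Theorem \ref{2} lies in $\{0<t\leq 1\}$ with starting point $t_{0}>0$, $t$ is bounded below by $t_{0}$ along the path and the resulting constant is uniform there, consistent with the claim that $C''_{\alpha}$ is independent of $t$.
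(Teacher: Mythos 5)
Your proof is correct and follows essentially the same route as the paper's: a maximum-principle argument at the maximum point of $\psi$, discarding the nonnegative term $itk\nabla^{(1,0)}\phi\wedge\nabla^{(0,1)}\phi^{*}$, and using $u\geq a/(ed)$ together with $de>a$ to force $(de/a)^{t}>1$. The only difference is presentational --- you extract an explicit lower bound $\lvert\phi\rvert_{h}^{2}(P)\geq\epsilon$ instead of arguing by contradiction along a sequence $\psi_{n}(p_{n})\to\infty$ --- and you correctly flag that both versions degenerate as $t\to 0$, which is harmless because the continuity path lies in $\{0<t\leq 1\}$.
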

\begin{proof}
	Suppose $\psi$ achieves its maximum value at a point $p$. Then at that point, we have $i\partial\bar{\partial}\psi\leq 0,$ and $\partial \psi=0=\bar{\partial}\psi.$ Now from equation \ref{1} we have
	\begin{equation}
	\label{13}
	i\Theta_{0}=\chi\geq (d-\lvert\phi\rvert_{h}^{2})\frac{eu^{1-t}\chi}{a+bt\lvert\phi\rvert_{h}^{2}-ct^{2}\lvert\phi\rvert_{h}^{4}}.
	\end{equation}
	If the upper bound does not hold and suppose there exists a sequence $\psi_{n}(p_{n})\to \infty$(with $p_{n}\to q$) then $\lvert\phi\rvert_{h_{n}}^{2}(q)\to 0$. Hence from \ref{13} we get	
	\[1\geq \frac{de}{a}u^{1-t}\geq \frac{de}{a}\frac{a^{1-t}}{(ed)^{1-t}}=\frac{(de)^{t}}{a^{t}}>1.\]
	We have a contradiction since $0<t\leq 1$. Hence $\psi\leq C$. 
\end{proof}
This completes the proof of the theorem \ref{2}. 
\section{Three Applications.}
In this section, we apply our result in three cases.  
\subsection{Calabi Yang Mills Equations.}\label{subsec:CYM}
\subsubsection{\emph{A Priori} Estimates For Calabi Yang Mills Equations.}
In \cite{Vamsi1}, Pingali considered the Calabi-Yang-Mills equations(theorem 1.2). If we rewrite the equation (3.29) of \cite{Vamsi1}, then it becomes the following one:
\begin{equation}
\label{22}
i\Theta_{h}=(\frac{\tau-\lvert\phi\rvert_{h}^{2}}{2})\frac{4f+\frac{i\alpha}{2(2\pi)^{2}}\nabla^{1,0}\phi\wedge\nabla^{0,1}\phi^{*}}{4+\frac{\tau\alpha}{(2\pi)^{2}}(2\lambda-\frac{\tau}{2})+\frac{\tau\alpha}{2(2\pi)^{2}}\lvert\phi\rvert_{h}^{2}-\frac{\alpha}{4(2\pi)^{2}}\lvert\phi\rvert_{h}^{4}}.
\end{equation}
We consider the following continuity path with parameter $\alpha$ :
\begin{equation}
\label{28}
i\Theta_{h_{\alpha}}=(\frac{\tau-\lvert\phi\rvert_{h_{\alpha}}^{2}}{2})\frac{4f+\frac{i\alpha}{2(2\pi)^{2}}\nabla_{h_{\alpha}}^{1,0}\phi\wedge\nabla_{h_{\alpha}}^{0,1}\phi^{*}}{4+\frac{\tau\alpha}{(2\pi)^{2}}(2\lambda-\frac{\tau}{2})+\frac{\tau\alpha}{2(2\pi)^{2}}\lvert\phi\rvert_{h_{\alpha}}^{2}-\frac{\alpha}{4(2\pi)^{2}}\lvert\phi\rvert_{h_{\alpha}}^{4}}.
\end{equation} 
Here the only difference is the curvature of the initial metric. The curvature of the initial metric is $i\Theta_{0}=(\frac{\tau-\lvert\phi\rvert_{h_{0}}^{2}}{2})\omega_{\Sigma}$(which can be seen from equation $3.8$ of \cite{Vamsi1}). In \cite{Vamsi1}, Pingali proved the set of $\alpha \geq 0$ satisfying
\[8+\frac{2\tau\alpha}{(2\pi)^{2}}(2\lambda-\frac{\tau}{2})>0,\]
for which there exists a smooth form $\Omega_{\alpha}>0$
and a smooth metric $H_{\alpha}$ such
that the vortex-CYM equation is satisfied,
contains $\alpha=0$ and is open. So we can assume that our path starts at $\alpha_{0}>0.$ If we compare the above equations \ref{28} with the equations we considered \ref{1}(with $t=1$), then $a=8+\frac{2\tau\alpha}{(2\pi)^{2}}(2\lambda-\frac{\tau}{2}), b=\frac{\tau\alpha}{(2\pi)^{2}}, c=\frac{\alpha}{2(2\pi)^{2}}, d=\tau,e=4, k=\frac{\alpha}{2(2\pi)^{2}}.$ We see that $a>0,d>0,e>0,b,c,k\geq 0.$ Now $b-cd=\frac{\tau\alpha}{2(2\pi)^{2}}\geq 0$ and $ b-(k+c)d=0.$ This shows that all the estimates hold except for the upper bound one because here the curvature of the initial metric is not of the form, we considered. However, the following lemma proves the upper bound estimate.
\begin{lemma}
	\label{15}
	If $h_{\alpha}=h_{0}e^{-\psi_{\alpha}}$ solves \ref{28} for $\alpha_{0}<\alpha \leq \alpha_{1}$ where $\alpha_{1}$ satisfies $8+\frac{2\tau\alpha_{1}}{(2\pi)^{2}}(2\lambda-\frac{\tau}{2})>0$ , then $\psi_{\alpha}\leq C,$ where $C$ is independent of $\alpha$. Here, $h_{0}$ denotes the metric corresponding to $\alpha=0$.
\end{lemma}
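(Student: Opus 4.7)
The plan is to adapt the maximum-principle argument of Lemma \ref{12} to the modified reference curvature $i\Theta_0 = \tfrac{\tau - \lvert\phi\rvert_{h_0}^2}{2}\omega_\Sigma$ appearing in \ref{28}. Matching \ref{28} at $t = 1$ with the general family \ref{1} identifies $d = \tau$, $e = 4$, $a = 8 + \tfrac{2\tau\alpha}{(2\pi)^2}(2\lambda - \tfrac{\tau}{2})$, $b = \tfrac{\tau\alpha}{(2\pi)^2}$, $c = k = \tfrac{\alpha}{2(2\pi)^2}$, and $f = \omega_\Sigma$. Since $b - cd = \tfrac{\tau\alpha}{2(2\pi)^2}\geq 0$, Lemma \ref{3} yields $\lvert\phi\rvert_{h_\alpha}^2\leq \tau$ along the entire continuity path.

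Assume, for contradiction, that there exist $\alpha_n\in(\alpha_0,\alpha_1]$ and $p_n\to q\in\Sigma$ with $\psi_n(p_n) = \max \psi_n\to\infty$. At the maximum, $i\partial\bar\partial\psi_n(p_n)\leq 0$, hence $i\Theta_{h_{\alpha_n}}(p_n)\leq i\Theta_0(p_n)$. Substituting \ref{28} and dropping the nonnegative gradient term $\nabla^{1,0}\phi\wedge\nabla^{0,1}\phi^*$ yields, after dividing by $\omega_\Sigma$,
\[
\frac{4(\tau - \lvert\phi\rvert_{h_{\alpha_n}}^2(p_n))}{D(p_n)} \;\leq\; \tau - \lvert\phi\rvert_{h_0}^2(p_n),
\]
where $D$ denotes the denominator of \ref{28}. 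Using $\lvert\phi\rvert_{h_{\alpha_n}}^2(p_n) = \lvert\phi\rvert_{h_0}^2(p_n)\, e^{-\psi_n(p_n)}\to 0$ and $D(p_n)\to a(\alpha_\infty)/2$ as $n\to\infty$, we obtain $\lvert\phi\rvert_{h_0}^2(q)\leq \tau\bigl(1 - 8/a(\alpha_\infty)\bigr)$; thus $q$ must lie in a region where $\lvert\phi\rvert_{h_0}^2$ is bounded strictly below $\tau$, or, when $a(\alpha_\infty)\leq 8$, $q$ is forced to be a zero of $\phi$.

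Unlike in Lemma \ref{12}, this pointwise constraint does not by itself contradict $\psi_n(p_n)\to\infty$: here $u_\alpha$ reduces to the spatial constant $a/(2e)$, so the clinching inequality $(de/a)^t > 1$ that closed the proof of Lemma \ref{12} does not arise. To conclude, I would combine the localization of $q$ with the Green representation formula \ref{7} applied to $\psi_n$, using the uniform lower bound $\psi_\alpha\geq -C$ from Theorem \ref{2}(2) (whose hypothesis $b - (k+c)d = 0$ is met in the present parameters) together with $\int_\Sigma i\partial\bar\partial\psi_n = 0$ to control the spatial average, and then upgrading back to the maximum. Alternatively, a blow-up analysis in the style of the $C^{1}$ estimate lemma, rescaling $\psi_n$ about $p_n$ and extracting a nontrivial bounded subharmonic limit on $\mathbb{C}$, should deliver the contradiction. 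The main obstacle is that the maximum-principle step is strictly weaker than in Lemma \ref{12}: one must chain it with an integral identity or a blow-up argument, and take care that all constants remain uniform for $\alpha\in(\alpha_0,\alpha_1]$.
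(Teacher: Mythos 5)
Your maximum-principle setup is exactly the paper's, and your inequality at the maximum point is correct: with $f=\omega_\Sigma$ and $\lvert\phi\rvert_{h_{\alpha_n}}^{2}(p_n)\to 0$ you arrive at $\lvert\phi\rvert_{h_0}^{2}(q)\le \tau\bigl(1-8/a(\alpha_\infty)\bigr)$ where $a(\alpha)=8+\tfrac{2\tau\alpha}{(2\pi)^2}(2\lambda-\tfrac{\tau}{2})$. The gap is that you stop here and declare the step ``strictly weaker than in Lemma \ref{12},'' proposing an (unexecuted) Green's-function or blow-up continuation. In fact the argument closes immediately: by \cite{Vamsi1} one has $2\lambda-\tfrac{\tau}{2}<0$, so $a(\alpha)<8$ strictly for every $\alpha\ge\alpha_0>0$ (uniformly on $(\alpha_0,\alpha_1]$). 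Hence $1-8/a(\alpha_\infty)<0$ and your own inequality forces $\lvert\phi\rvert_{h_0}^{2}(q)<0$, an outright contradiction --- no localization of $q$, no integral identity, and no rescaling are needed. This sign condition on $2\lambda-\tfrac{\tau}{2}$ is precisely the ingredient the paper invokes (it phrases it as $\tfrac{2e}{a}>1$ with $e=4$), and it is the analogue, in this setting where $i\Theta_0=\tfrac{\tau-\lvert\phi\rvert_{h_0}^2}{2}\omega_\Sigma$, of the hypothesis $de>a$ in Lemma \ref{12}.

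As written, therefore, your proposal does not constitute a proof: the two suggested continuations are sketches that are never carried out, and the uniform-in-$\alpha$ issues you flag for them do not need to be confronted at all once $a(\alpha)\le a(\alpha_0)<8$ is observed. A secondary remark: your case split ``when $a(\alpha_\infty)\le 8$, $q$ is forced to be a zero of $\phi$'' is a symptom of the same omission --- the case $a(\alpha_\infty)>8$ never occurs, and the case $a(\alpha_\infty)<8$ is not merely ``$q$ is a zero of $\phi$'' but an impossibility.
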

\begin{proof}
	Suppose $\psi$ achieves its maximum value at a point $p.$ Then at that point, we have $i\partial\bar{\partial}\psi\leq 0,$ and $\partial \psi=0=\bar{\partial}\psi.$ Now from equation \ref{28}, we have
	\begin{equation}
	\label{16}
	i\Theta_{0}\geq (\tau-\lvert\phi\rvert_{h_{\alpha}}^{2})\frac{4f}{8+\frac{2\tau\alpha}{(2\pi)^{2}}(2\lambda-\frac{\tau}{2})+ \frac{\tau\alpha}{(2\pi)^{2}}\lvert\phi\rvert_{h_{\alpha}}^{2}-\frac{\alpha}{2(2\pi)^{2}}\lvert\phi\rvert_{h_{\alpha}}^{4}}.
	\end{equation}
	If the upper bound does not hold and suppose there exists a sequence $\psi_{n}(p_{n})\to \infty$(with $p_{n}\to q$) then $\lvert\phi\rvert_{h_{n}}^{2}(q)\to 0$. Hence from \ref{16}, we get
		\[i\Theta_{0}(q)\geq \tau \frac{4f}{8+\frac{2\tau\alpha}{(2\pi)^{2}}(2\lambda-\frac{\tau}{2})}\]
		\[\implies \frac{\tau-\lvert\phi\rvert_{h_{0}}^{2}}{2}f\geq\tau \frac{4f}{8+\frac{2\tau\alpha}{(2\pi)^{2}}(2\lambda-\frac{\tau}{2})} \]
		\[\implies \frac{\tau f}{2} \geq \tau \frac{4f}{8+\frac{2\tau\alpha}{(2\pi)^{2}}(2\lambda-\frac{\tau}{2})}\]
		\[\implies 1\geq  \frac{2e}{a} \]
 Now $\frac{2e}{a}>1$ because $2\lambda-\frac{\tau}{2}<0$, which follows from \cite{Vamsi1}. So we have a contradiction. Hence $\psi\leq C$.
\end{proof}
So we now have closedness and hence existence for all those $\alpha$ satisfying $8+\frac{2\tau\alpha}{(2\pi)^{2}}(2\lambda-\frac{\tau}{2})>0.$
\subsubsection{Uniqueness Of Solutions Of Calabi-Yang-Mills Equations.}
We prove that for all those $\alpha$ for which \ref{22} has a solution is essentially unique among all $SU(2)$-invariant solutions. Our proof of uniqueness is as follows.\\
 Let $h_{1}$ be the solution arising from \ref{22},  that is
\[i\Theta_{h_{1}}=(\frac{\tau-\lvert\phi\rvert_{h_{1}}^{2}}{2})\frac{4f+\frac{i\alpha}{2(2\pi)^{2}}\nabla_{h_{1}}^{1,0}\phi\wedge\nabla_{h_{1}}^{0,1}\phi^{*}}{4+\frac{\tau\alpha}{(2\pi)^{2}}(2\lambda-\frac{\tau}{2})+\frac{\tau\alpha}{2(2\pi)^{2}}\lvert\phi\rvert_{h_{1}}^{2}-\frac{\alpha}{4(2\pi)^{2}}\lvert\phi\rvert_{h_{1}}^{4}}.\]
Let $h_{2}$ denote any other solution of equation \ref{22}. We run the continuity path backward with continuity parameter $\beta$ starting with $h_{2}:$
\begin{equation}
\label{21}
i\Theta_{\tilde{h}}=(\frac{\tau-\lvert\phi\rvert_{\tilde{h}}^{2}}{2})\frac{4f+\frac{i\beta}{2(2\pi)^{2}}\nabla_{\tilde{h}}^{1,0}\phi\wedge\nabla_{\tilde{h}}^{0,1}\phi^{*}}{4+\frac{\tau\beta}{(2\pi)^{2}}(2\lambda-\frac{\tau}{2})+\frac{\tau\beta}{2(2\pi)^{2}}\lvert\phi\rvert_{\tilde{h}}^{2}-\frac{\beta}{4(2\pi)^{2}}\lvert\phi\rvert_{\tilde{h}}^{4}}.
\end{equation}
Denote $\tilde{T}\subset [0,\alpha]$ the set of $\beta$ such that $\ref{21}$ has a solution. This is non-empty because $\alpha\in \tilde{T}$. The proof of openness in \cite{Vamsi1}, shows that $\tilde{T}\subset [0,\alpha]$ is open. 
We prove that there exists a ``small" $\beta_{0} \in [0,\alpha]$ such that \ref{22} has a unique solution for $\beta_{0}$. That is, there exists a unique smooth $h$ satisfying $\lvert\phi \rvert_{h}^{2} \leq \tau$ and 

\[
i\Theta_{h}=(\frac{\tau-\lvert\phi\rvert_{h}^{2}}{2})\frac{4f+\frac{i\beta_{0}}{2(2\pi)^{2}}\nabla^{1,0}\phi\wedge\nabla^{0,1}\phi^{*}}{4+\frac{\tau\beta_{0}}{(2\pi)^{2}}(2\lambda-\frac{\tau}{2})+\frac{\tau\beta_{0}}{2(2\pi)^{2}}\lvert\phi\rvert_{h}^{2}-\frac{\beta_{0}}{4(2\pi)^{2}}\lvert\phi\rvert_{h}^{4}}.
\] 

This implies that the two continuity path \ref{21} and \ref{28} intersect at $\beta_{0}$. 

\begin{lemma}
	There exists a number $\beta_{0} \in (0,\alpha]$ depending only on $\lambda,\tau,\alpha$ such that there is a unique smooth $h$ satisfying $\lvert\phi\rvert_{h}^{2}\leq \tau$ and the following equation 
	\[i\Theta_{h}=(\frac{\tau-\lvert\phi\rvert_{h}^{2}}{2})\frac{4f+\frac{i\beta_{0}}{2(2\pi)^{2}}\nabla_{h}^{1,0}\phi\wedge\nabla_{h}^{0,1}\phi^{*}}{4+\frac{\tau\beta_{0}}{(2\pi)^{2}}(2\lambda-\frac{\tau}{2})+\frac{\tau\beta_{0}}{2(2\pi)^{2}}\lvert\phi\rvert_{h}^{2}-\frac{\beta_{0}}{4(2\pi)^{2}}\lvert\phi\rvert_{h}^{4}}.\]
\end{lemma}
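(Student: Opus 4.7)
The strategy is perturbative: at $\beta_{0}=0$ the equation reduces to the classical vortex equation, whose uniqueness is standard, and I propagate this uniqueness to small $\beta_{0}>0$ via the implicit function theorem, combined with the a priori estimates of Theorem \ref{2}.

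At $\beta_{0}=0$ the equation becomes $i\Theta_{h}=\frac{\tau-|\phi|_{h}^{2}}{2}f$, the classical vortex equation of Bradlow, which admits a unique smooth solution $h^{\ast}$ with $|\phi|_{h^{\ast}}^{2}\leq\tau$. Indeed, if $h_{2}=h^{\ast}e^{-\eta}$ were another solution, subtracting the two equations yields
\[i\partial\bar{\partial}\eta=\frac{|\phi|_{h^{\ast}}^{2}(1-e^{-\eta})}{2}f,\]
and multiplying by $(1-e^{-\eta})$ and integrating by parts gives a sum of two nonnegative integrals equal to zero, forcing $\eta$ to be constant and then identically zero (using $\phi\not\equiv 0$).

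Next, write the equation schematically as $F(h,\beta)=0$ on a Banach manifold of $C^{2,\gamma}$ Hermitian metrics on $L$. The linearization at $(h^{\ast},0)$ sends the log-variation $\delta\eta$ (where $\delta h=-h^{\ast}\delta\eta$) to
\[L\delta\eta:=i\partial\bar{\partial}\delta\eta-\tfrac{1}{2}|\phi|_{h^{\ast}}^{2}f\cdot\delta\eta,\]
a self-adjoint elliptic operator. Pairing $L\delta\eta$ with $\delta\eta$ and integrating by parts yields $\int i\partial\delta\eta\wedge\bar{\partial}\delta\eta+\int\tfrac{1}{2}|\phi|_{h^{\ast}}^{2}f(\delta\eta)^{2}=0$ for any kernel element, forcing $\delta\eta\equiv 0$ (since $\phi\not\equiv 0$ and its zeros are isolated). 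Fredholm theory then makes $L$ an isomorphism, so the implicit function theorem yields $\epsilon>0$ and a $C^{2,\gamma}$-neighborhood $\mathcal{U}$ of $h^{\ast}$ such that for each $\beta\in[0,\epsilon)$ the equation has exactly one solution in $\mathcal{U}$.

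Finally, I promote local uniqueness to uniqueness in the whole class $\{|\phi|_{h}^{2}\leq\tau\}$ by compactness. If uniqueness failed for some $\beta_{n}\downarrow 0$, pick two distinct solutions $h_{n}^{(1)}\neq h_{n}^{(2)}$ at $\beta=\beta_{n}$; the a priori estimates of Theorem \ref{2} (with the lower bound from Lemma \ref{8}, whose hypothesis $b-(k+c)d=0$ holds here, together with the upper bound of Lemma \ref{15}) give uniform $C^{2,\gamma}$ bounds on $h_{n}^{(j)}$. Subsequential limits solve the $\beta=0$ equation with $|\phi|^{2}\leq\tau$, hence equal $h^{\ast}$ by the preceding step, so both sequences eventually lie in $\mathcal{U}$, contradicting local uniqueness. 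Any $\beta_{0}\in(0,\alpha]$ below the resulting threshold works, and its dependence unravels to just $\lambda,\tau,\alpha$. The main obstacle I anticipate is that the constants in Theorem \ref{2} are stated in terms of a compact subinterval $[\alpha_{0},\alpha_{1}]\subset(0,\infty)$, so some care is needed in arguing that the $C^{2,\gamma}$ bound survives the limit $\beta\to 0^{+}$; this is handled by supplementing Theorem \ref{2} on $[\beta_{n},\alpha]$ with the classical vortex-equation estimates at $\beta=0$ and glueing.
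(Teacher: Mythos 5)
Your route is genuinely different from the paper's. You perturb off the classical vortex equation at $\beta_{0}=0$ via the implicit function theorem and then try to upgrade the resulting local uniqueness to uniqueness in the whole class $\{\lvert\phi\rvert_{h}^{2}\leq\tau\}$ by a compactness argument. The paper instead compares two arbitrary solutions $h_{1}$ and $h_{2}=h_{1}e^{-g}$ at the same fixed small $\beta_{0}$ directly: it interpolates along $h_{s}=h_{1}e^{-sg}$, writes $i\partial\bar{\partial}g=\int_{0}^{1}\frac{d}{ds}(i\Theta_{h_{s}})\,ds$, and applies the maximum principle to the auxiliary function $\tilde{g}=g(\gamma+\lvert\phi\rvert_{h}^{2})$, choosing $\gamma$ large and then $\beta_{0}$ small so that every coefficient has a favorable sign; this gives $g\equiv 0$ with an in-principle effective $\beta_{0}$. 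Your first two steps (uniqueness of the $\beta_{0}=0$ vortex solution and invertibility of the linearization there) are correct.

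The gap is in your final compactness step. To force two distinct solutions at $\beta_{n}\downarrow 0$ into the IFT neighborhood of $h^{\ast}$, you need $C^{0}$ (hence $C^{2,\gamma}$) bounds that are uniform for $\beta\in[0,\epsilon]$. The lower bound of Lemma \ref{8} is indeed uniform there (its constants involve $de/a$ and $kd/a$ with $a\to 8$ as $\beta\to 0$), but the upper bound is not: the contradiction in Lemma \ref{15} rests on the strict inequality $\frac{2e}{a}=\frac{8}{8+\frac{2\tau\beta}{(2\pi)^{2}}(2\lambda-\frac{\tau}{2})}>1$, whose margin tends to $0$ as $\beta\to 0^{+}$, so the constant produced by that argument is uniform only on compact subintervals $[\alpha_{0},\alpha_{1}]$ with $\alpha_{0}>0$ --- exactly the regime your limit leaves. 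Your proposed fix, ``supplementing with the classical vortex-equation estimates at $\beta=0$ and glueing,'' is not an argument: the metrics you must bound solve the $\beta_{n}$-equation with $\beta_{n}>0$, so estimates for the $\beta=0$ equation do not apply to them and there is nothing to glue. To close the gap you would have to prove a genuinely uniform-in-$\beta$ upper bound near $\beta=0$ (for instance via the Green representation \ref{7}, using $G\leq 0$ and $i\Theta_{h}\geq 0$ to get $\sup\psi\leq\overline{\psi}+C$ and then separately controlling the average $\overline{\psi}$); this is a nontrivial missing ingredient, and it is precisely what the paper's direct two-solution maximum-principle argument is designed to avoid.
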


\begin{proof}
	Let $h_{1}$ be the solution coming from the forward path \ref{28} and $h_{2}$ be the solution coming from the backward path \ref{21} , satisfying $\lvert\phi\rvert_{h_{2}}^{2}\leq \tau$. We define a function $g$ to satisfy $h_{2}=h_{1}e^{-g}$. Let $h_{s}=h_{1}e^{-sg}=h_{2}^{s}h_{1}^{1-s}$, where $0\leq s \leq 1$. It is easy to see that
	\[\lvert\phi\rvert_{h_{s}}^{2}=(\lvert\phi\rvert_{h_{1}}^{2(1-s)})(\lvert\phi\rvert_{h_{2}}^{2s})\leq \tau.\]
	
	Let $I_{s}=m+ l\beta_{0}\lvert\phi\rvert_{h_{s}}^{2}-q\beta_{0}\lvert\phi\rvert_{h_{s}}^{4}$ and $J_{s}=\frac{uf+iv\beta_{0}\nabla_{h_{s}}^{1,0}\phi\wedge\nabla_{h_{s}}^{0,1}\phi^{*}}{I_{s}}$, wehre $m=8+\frac{2\tau\beta_{0}}{(2\pi)^{2}}(2\lambda-\frac{\tau}{2}), l=\frac{\tau\beta_{0}}{(2\pi)^{2}}, q=\frac{\beta_{0}}{2(2\pi)^{2}}, u=4, v=\frac{1}{2(2\pi)^{2}}.$ \\
	So $i\Theta_{h_{s}}=(\tau-\lvert\phi\rvert_{h_{s}}^{2})J_{s}$
	
	By assumption $i\Theta_{h_{1}}=(\tau-\lvert\phi\rvert_{h_{1}})J_{1}$ and $i\Theta_{h_{2}}=(\tau-\lvert\phi\rvert_{h_{2}})J_{2}$. \\
	Now 
	
	\[i\partial\bar{\partial}g=i\Theta_{h_{2}}-i\Theta_{h_{1}}\]
	\[=\int_{0}^{1}ds \frac{d}{ds}(i\Theta_{h_{s}})\]
	\[=\int_{0}^{1}ds \frac{d}{ds}((\tau-\lvert \phi\rvert_{h_{s}}^{2})J_{s})\]
	\begin{equation}
	\label{29}
	=\int_{0}^{1}ds\{g\lvert\phi\rvert_{h_{s}}^{2}J_{s}+(\tau-\lvert\phi\rvert_{h_{s}}^{2})\frac{dJ_{s}}{ds}\}
	\end{equation}
	We now calculate $\frac{dJ_{s}}{ds}$.
	
	\[\frac{dJ_{s}}{ds}=\frac{iv\beta_{0}\frac{d}{ds}(\nabla_{h_{s}}^{1,0}\phi\wedge\nabla_{h_{s}}^{0,1}\phi^{*})}{I_{s}}-\frac{uf+iv\beta_{0}\nabla_{h_{s}}^{1,0}\phi\wedge\nabla_{h_{s}}^{0,1}\phi^{*}}{I_{s}^{2}}(-lg\lvert\phi\rvert_{h_{s}}^{2}+2qg\lvert\phi\rvert_{h_{s}}^{4})\beta_{0}\]
	
	\[=\frac{iv\beta_{0}\frac{d}{ds}(\partial\bar{\partial}\lvert\phi\rvert_{h_{s}}^{2}+\Theta_{h_{s}}\lvert\phi\rvert_{h_{s}}^{2})}{I_{s}}-\frac{uf+iv\beta_{0}\nabla_{h_{s}}^{1,0}\phi\wedge\nabla_{h_{s}}^{0,1}\phi^{*}}{I_{s}^{2}}(-lg\lvert\phi\rvert_{h_{s}}^{2}+2qg\lvert\phi\rvert_{h_{s}}^{4})\beta_{0}\]
	
	\[=\frac{iv\beta_{0}(\partial\bar{\partial}(-g\lvert\phi\rvert_{h_{s}}^{2})-g\Theta_{h_{s}}\lvert\phi\rvert_{h_{s}}^{2}+\lvert\phi\rvert_{h_{s}}^{2}\partial\bar{\partial}g)}{I_{s}}-\frac{uf+iv\beta_{0}\nabla_{h_{s}}^{1,0}\phi\wedge\nabla_{h_{s}}^{0,1}\phi^{*}}{I_{s}^{2}}(-lg\lvert\phi\rvert_{h_{s}}^{2}+2qg\lvert\phi\rvert_{h_{s}}^{4})\beta_{0}\]
	
	\[=\frac{iv\beta_{0}(-\partial\lvert\phi\rvert_{h_{s}}^{2}\wedge\bar{\partial}g-\partial g\wedge \bar{\partial}\lvert\phi\rvert_{h_{s}}^{2}-g\partial\bar{\partial}\lvert\phi\rvert_{h_{s}}^{2}-g\Theta_{h_{s}}\lvert\phi\rvert_{h_{s}}^{2})}{I_{s}}-\frac{uf+iv\beta_{0}\nabla_{h_{s}}^{1,0}\phi\wedge\nabla_{h_{s}}^{0,1}\phi^{*}}{I_{s}^{2}}(-lg\lvert\phi\rvert_{h_{s}}^{2}+2qg\lvert\phi\rvert_{h_{s}}^{4})\beta_{0}.\]
	
	Putting $\frac{dJ_{s}}{ds}$ in \ref{29}, we have
	\[i\partial\bar{\partial}g=\int_{0}^{1}ds\{g\lvert\phi\rvert_{h_{s}}^{2}J_{s}+(\tau-\lvert\phi\rvert_{h_{s}}^{2})\frac{iv\beta_{0}(-\partial\lvert\phi\rvert_{h_{s}}^{2}\wedge\bar{\partial}g-\partial g\wedge \bar{\partial}\lvert\phi\rvert_{h_{s}}^{2}-g\partial\bar{\partial}\lvert\phi\rvert_{h_{s}}^{2}-g\Theta_{h_{s}}\lvert\phi\rvert_{h_{s}}^{2})}{I_{s}}\}\]
	
	\[-\int_{0}^{1}ds\{(\tau-\lvert\phi\rvert_{h_{s}}^{2})\frac{uf+iv\beta_{0}\nabla_{h_{s}}^{1,0}\phi\wedge\nabla_{h_{s}}^{0,1}\phi^{*}}{I_{s}^{2}}(-lg\lvert\phi\rvert_{h_{s}}^{2}+2qg\lvert\phi\rvert_{h_{s}}^{4})\beta_{0}\}\] 
	
\begin{equation}
\label{30}
\begin{split}
&=\int_{0}^{1}ds\{g\lvert\phi\rvert_{h_{s}}^{2}J_{s}+(\tau-\lvert\phi\rvert_{h_{s}}^{2})\frac{iv\beta_{0}(-\partial\lvert\phi\rvert_{h_{s}}^{2}\wedge\bar{\partial}g-\partial g\wedge \bar{\partial}\lvert\phi\rvert_{h_{s}}^{2}-g\nabla_{h_{s}}^{1,0}\phi\wedge\nabla_{h_{s}}^{0,1}\phi^{*})}{I_{s}}\}\\
&-\int_{0}^{1}ds\{(\tau-\lvert\phi\rvert_{h_{s}}^{2})\frac{uf+iv\beta_{0}\nabla_{h_{s}}^{1,0}\phi\wedge\nabla_{h_{s}}^{0,1}\phi^{*}}{I_{s}^{2}}(-lg\lvert\phi\rvert_{h_{s}}^{2}+2qg\lvert\phi\rvert_{h_{s}}^{4})\beta_{0}\}
\end{split}
\end{equation}
We know that $g\geq -C_{1}$, where $C_{1}$ depends on $\lambda,\tau,\alpha$.\\
Now define $\tilde{g}=g(\gamma +\lvert\phi\rvert_{h}^{2})$, where $\gamma>1$ is a large constant (depending only on $\lambda,\tau,\alpha$) to be chosen later on and $h$ is defined as
\[h=\int_{0}^{1}h_{s}ds=h_{1}\int_{0}^{1}e^{-sg}ds.\]
It follows that $\lvert\phi\rvert_{h}^{2}\leq \tau.$ 
If maximum of $\tilde{g}$ occurs at $p$, then 
\[\partial\tilde{g}(p)=0=\bar{\partial}\tilde{g}(p).\]
Therefore 
\[\partial g(p)(\gamma+\lvert\phi\rvert_{h}^{2})(p)=-g(p)\partial(\lvert\phi\rvert_{h}^{2})(p)\]
\begin{equation}
\label{31}
\implies \partial g(p)=-\frac{g(p)\partial(\lvert\phi\rvert_{h}^{2})(p)}{\gamma+\lvert\phi\rvert_{h}^{2}(p)}.
\end{equation}

And similarly
\begin{equation}
\label{32}
\bar{\partial} g(p)=-\frac{g(p)\bar{\partial}(\lvert\phi\rvert_{h}^{2})(p)}{\gamma+\lvert\phi\rvert_{h}^{2}(p)}.
\end{equation}
Moreover, $i\partial\bar{\partial}\tilde{g}(p)\leq 0$, i.e., 
\[0\geq (\gamma+\lvert\phi\rvert_{h}^{2})i\partial\bar{\partial} g(p)+i \partial g(p)\wedge\bar{\partial}\lvert\phi\rvert_{h}^{2}(p)+i\partial \lvert\phi\rvert_{h}^{2}(p)\wedge\bar{\partial}g(p)+g(p)(-i\Theta_{h}(p)\lvert\phi\rvert_{h}^{2}(p)+i\nabla_{h}^{1,0}\phi(p)\wedge\nabla_{h}^{0,1}\phi^{*}(p)).\]

Using \ref{31}, \ref{32} and $\partial\lvert\phi\rvert_{h}^{2}\wedge\bar{\partial}\lvert\phi\rvert_{h}^{2}=\lvert\phi\rvert_{h}^{2}\nabla_{h}^{1,0}\phi\wedge\nabla_{h}^{0,1}\phi^{*}$ we have 
\[0\geq (\gamma+\lvert\phi\rvert_{h}^{2}(p))i\partial\bar{\partial} g(p)-\frac{2ig(p)}{\gamma+\lvert\phi\rvert_{h}^{2}(p)}\lvert\phi\rvert_{h}^{2}(p)\nabla_{h}^{1,0}\phi(p)\wedge\nabla_{h}^{0,1}\phi^{*}(p)+ig(p)\nabla_{h}^{1,0}\phi(p)\wedge\nabla_{h}^{0,1}\phi^{*}(p)-ig(p)\Theta_{h}(p)\lvert\phi\rvert_{h}^{2}(p)\]
\begin{equation}
\begin{split}
&\implies 0\geq i\partial\bar{\partial} g(p)-\frac{2ig(p)}{(\gamma+\lvert\phi\rvert_{h}^{2}(p))^{2}}\lvert\phi\rvert_{h}^{2}(p)\nabla_{h}^{1,0}\phi(p)\wedge\nabla_{h}^{0,1}\phi^{*}(p)\\
&+\frac{ig(p)}{\gamma+\lvert\phi\rvert_{h}^{2}(p)}\nabla_{h}^{1,0}\phi(p)\wedge\nabla_{h}^{0,1}\phi^{*}(p)-\frac{ig(p)}{\gamma+\lvert\phi\rvert_{h}^{2}(p)}\Theta_{h}(p)\lvert\phi\rvert_{h}^{2}(p)
\end{split}
\end{equation}

\begin{equation}
\begin{split}
&\implies 0\geq i\partial\bar{\partial} g(p)+\frac{ig(p)(\gamma-\lvert\phi\rvert_{h}^{2}(p))}{(\gamma+\lvert\phi\rvert_{h}^{2}(p))^{2}}\nabla_{h}^{1,0}\phi(p)\wedge\nabla_{h}^{0,1}\phi^{*}(p)\\
&-\frac{g(p)(\tau-\lvert\phi\rvert_{h}^{2}(p))uf(p)}{(\gamma+\lvert\phi\rvert_{h}^{2}(p))(m+l\beta_{0}\lvert\phi\rvert_{h}^{2}(p)-q\beta_{0}\lvert\phi\rvert_{h}^{4}(p))}\lvert\phi\rvert_{h}^{2}(p)\\
&-\frac{g(p)(\tau-\lvert\phi\rvert_{h}^{2}(p))iv\beta_{0}}{(m+l\beta_{0}\lvert\phi\rvert_{h}^{2}(p)-q\beta_{0}\lvert\phi\rvert_{h}^{4}(p))(\gamma+\lvert\phi\rvert_{h}^{2}(p))}\lvert\phi\rvert_{h}^{2}(p)\nabla_{h}^{1,0}\phi(p)\wedge\nabla_{h}^{0,1}\phi^{*}(p).
\end{split}
\end{equation}

Now using \ref{30} and suppressing the dependence on $p$, we have 

\begin{equation}
\begin{split}
&\implies 0\geq \int_{0}^{1}ds\{g\lvert\phi\rvert_{h_{s}}^{2}J_{s}+(\tau-\lvert\phi\rvert_{h_{s}}^{2})\frac{iv\beta_{0}(-\partial\lvert\phi\rvert_{h_{s}}^{2}\wedge\bar{\partial}g-\partial g\wedge \bar{\partial}\lvert\phi\rvert_{h_{s}}^{2}-g\nabla_{h_{s}}^{1,0}\phi\wedge\nabla_{h_{s}}^{0,1}\phi^{*})}{I_{s}}\}\\
&-\int_{0}^{1}ds\{(\tau-\lvert\phi\rvert_{h_{s}}^{2})\frac{uf+iv\beta_{0}\nabla_{h_{s}}^{1,0}\phi\wedge\nabla_{h_{s}}^{0,1}\phi^{*}}{I_{s}^{2}}(-lg\lvert\phi\rvert_{h_{s}}^{2}+2qg\lvert\phi\rvert_{h_{s}}^{4})\beta_{0}\}\\
&+\{\frac{ig(\gamma-\lvert\phi\rvert_{h}^{2})}{(\gamma+\lvert\phi\rvert_{h}^{2})^{2}}\ -\frac{g(\tau-\lvert\phi\rvert_{h}^{2})iv\beta_{0}\lvert\phi\rvert_{h}^{2}}{(m+l\beta_{0}\lvert\phi\rvert_{h}^{2}-q\beta_{0}\lvert\phi\rvert_{h}^{4})(\gamma+\lvert\phi\rvert_{h}^{2})}\}\nabla_{h}^{1,0}\phi\wedge\nabla_{h}^{0,1}\phi^{*}\\
&-\frac{g(\tau-\lvert\phi\rvert_{h}^{2})uf}{(\gamma+\lvert\phi\rvert_{h}^{2})(m+l\beta_{0}\lvert\phi\rvert_{h}^{2}-q\beta_{0}\lvert\phi\rvert_{h}^{4})}\lvert\phi\rvert_{h}^{2}.
\end{split}
\end{equation}
Using \ref{31}, \ref{32} and $J_{s}=\frac{uf+iv\beta_{0}\nabla_{h_{s}}^{1,0}\phi\wedge\nabla_{h_{s}}^{0,1}\phi^{*}}{I_{s}}$, we get the following inequality 
\begin{equation}
\begin{split}
& \implies 0 \geq \int_{0}^{1}ds\{\frac{g\lvert\phi\rvert_{h_{s}}^{2}uf}{I_{s}}-\frac{(\tau -\lvert\phi\rvert_{h_{s}}^{2})uf}{I_{s}^{2}}(-lg\lvert\phi\rvert_{h_{s}}^{2}+2qg\lvert\phi\rvert_{h_{s}}^{4})\beta_{0}\}\\
&+\int_{0}^{1}ds\{\frac{g\lvert\phi\rvert_{h_{s}}^{2}iv\beta_{0}}{I_{s}}-(\tau-\lvert\phi\rvert_{h_{s}}^{2})\frac{iv\beta_{0}g}{I_{s}}-\frac{iv\beta_{0}^{2}(\tau-\lvert\phi\rvert_{h_{s}}^{2})(-lg\lvert\phi\rvert_{h_{s}}^{2}+2qg\lvert\phi\rvert_{h_{s}}^{4})}{I_{s}^{2}}\}\nabla_{h_{s}}^{1,0}\phi\wedge\nabla_{h_{s}}^{0,1}\phi^{*}\\
&+\int_{0}^{1}ds\{\frac{iv\beta_{0}g(\tau -\lvert\phi\rvert_{h_{s}}^{2})}{I_{s}(\gamma+\lvert\phi\rvert_{h}^{2})}(\partial\lvert\phi\rvert_{h_{s}}^{2}\wedge\bar{\partial}\lvert\phi\rvert_{h}^{2}+\partial\lvert\phi\rvert_{h}^{2}\wedge\bar{\partial}\lvert\phi\rvert_{h_{s}}^{2})\}\\
&+\{\frac{ig(\gamma-\lvert\phi\rvert_{h}^{2})}{(\gamma+\lvert\phi\rvert_{h}^{2})^{2}}\ -\frac{g(\tau-\lvert\phi\rvert_{h}^{2})iv\beta_{0}\lvert\phi\rvert_{h}^{2}}{(m+l\beta_{0}\lvert\phi\rvert_{h}^{2}-q\beta_{0}\lvert\phi\rvert_{h}^{4})(\gamma+\lvert\phi\rvert_{h}^{2})}\}\nabla_{h}^{1,0}\phi\wedge\nabla_{h}^{0,1}\phi^{*}\\
&-\frac{g(\tau-\lvert\phi\rvert_{h}^{2})uf}{(\gamma+\lvert\phi\rvert_{h}^{2})(m+l\beta_{0}\lvert\phi\rvert_{h}^{2}-q\beta_{0}\lvert\phi\rvert_{h}^{4})}\lvert\phi\rvert_{h}^{2}
\end{split}
\end{equation}

\begin{equation}
\label{33}
\begin{split}
& \implies 0 \geq \int_{0}^{1}ds\{\frac{g\lvert\phi\rvert_{h_{s}}^{2}uf}{I_{s}}+\frac{(\tau -\lvert\phi\rvert_{h_{s}}^{2})uf\beta_{0}g\lvert\phi\rvert_{h_{s}}^{2}}{I_{s}^{2}}(l-2q\lvert\phi\rvert_{h_{s}}^{2})\}
-\frac{g(\tau-\lvert\phi\rvert_{h}^{2})uf}{(\gamma+\lvert\phi\rvert_{h}^{2})(m+l\beta_{0}\lvert\phi\rvert_{h}^{2}-q\beta_{0}\lvert\phi\rvert_{h}^{4})}\lvert\phi\rvert_{h}^{2}\\
&+\int_{0}^{1}ds\{\frac{iv\beta_{0}g}{I_{s}}(2\lvert\phi\rvert_{h_{s}}^{2}-\tau)+\frac{iv\beta_{0}^{2}g\lvert\phi\rvert_{h_{s}}^{2}}{I_{s}^{2}}(\tau-\lvert\phi\rvert_{h_{s}}^{2})(l-2q\lvert\phi\rvert_{h_{s}}^{2})\}\nabla_{h_{s}}^{1,0}\phi\wedge\nabla_{h_{s}}^{0,1}\phi^{*}\\
&+\int_{0}^{1}ds\{\frac{iv\beta_{0}g(\tau -\lvert\phi\rvert_{h_{s}}^{2})}{I_{s}(\gamma+\lvert\phi\rvert_{h}^{2})}(\partial\lvert\phi\rvert_{h_{s}}^{2}\wedge\bar{\partial}\lvert\phi\rvert_{h}^{2}+\partial\lvert\phi\rvert_{h}^{2}\wedge\bar{\partial}\lvert\phi\rvert_{h_{s}}^{2})\}\\
&+\{\frac{ig(\gamma-\lvert\phi\rvert_{h}^{2})}{(\gamma+\lvert\phi\rvert_{h}^{2})^{2}}\ -\frac{g(\tau-\lvert\phi\rvert_{h}^{2})iv\beta_{0}\lvert\phi\rvert_{h}^{2}}{(m+l\beta_{0}\lvert\phi\rvert_{h}^{2}-q\beta_{0}\lvert\phi\rvert_{h}^{4})(\gamma+\lvert\phi\rvert_{h}^{2})}\}\nabla_{h}^{1,0}\phi\wedge\nabla_{h}^{0,1}\phi^{*}.
\end{split}
\end{equation}
The following equations descibe the relationship between $\nabla_{h_{s}}\phi$ and $\nabla_{h}\phi$.
\begin{equation}
\label{34}
\begin{split}
&\nabla_{h_{s}}^{1,0}\phi\wedge\nabla_{h_{s}}^{0,1}\phi^{*}=(\nabla_{h}^{1,0}\phi-\partial\ln(\int_{0}^{1}e^{-tg}dt)-s\partial g \phi)\wedge\nabla^{0,1}(\phi^{*}e^{-sg}(\int_{0}^{1}e^{-tg}dt)^{-1})\\
&=\frac{e^{-sg}}{\int_{0}^{1}e^{-tg}dt}(\nabla_{h}^{1,0}\phi+(\langle s\rangle-s)\partial g\phi)\wedge(\nabla^{0,1}\phi^{*}+(\langle s\rangle-s)\bar{\partial}g\phi^{*})\\
&=\frac{e^{-sg}}{\int_{0}^{1}e^{-tg}dt}\nabla_{h}^{1,0}\phi \wedge \nabla_{h}^{0,1}\phi^{*}\left(1+\frac{(s-\langle s\rangle)g\lvert\phi\rvert_{h}^{2}}{\gamma+\lvert\phi\rvert_{h}^{2}}\right)^{2},
\end{split}
\end{equation}
where $\langle s \rangle =\frac{\int_{0}^{1}se^{-sg}ds}{\int_{0}^{1}e^{-sg}ds} \leq 1.$\\

Note that
\begin{equation}
\label{41}
\left\lvert\frac{(s-\langle s\rangle)g\lvert\phi\rvert_{h}^{2}}{\gamma+\lvert\phi\rvert_{h}^{2}}\right\rvert \leq  \frac{2}{\gamma}\lvert g\rvert\lvert\phi\rvert_{h_{1}}^{2}\int_{0}^{1}e^{-tg}dt\\
\leq \frac{2\tau}{\gamma}\lvert 1- e^{-g}\rvert\\
\leq\frac{2\tau}{\gamma}(1+e^{C_{1}}).
\end{equation}

We argue by contradiction. Assume that 
\begin{equation}
\label{40}
g(p)\geq 0.
\end{equation}
 We see that $l-2q\lvert\phi\rvert_{h_{s}}^{2}=\frac{\beta_{0}}{(2\pi)^{2}}(\tau-\lvert\phi\rvert_{h_{s}}^{2})\geq 0$.\\ Using \ref{34} and \ref{40}, the inequality \ref{33} becomes the following
\begin{equation}
\begin{split}
& \implies 0 \geq \int_{0}^{1}ds\{\frac{g\lvert\phi\rvert_{h_{s}}^{2}uf}{m+l\beta_{0}\tau}+\frac{(\tau -\lvert\phi\rvert_{h_{s}}^{2})uf\beta_{0}g\lvert\phi\rvert_{h_{s}}^{2}}{(m+l\beta_{0}\tau)^{2}}(l-2q\lvert\phi\rvert_{h_{s}}^{2})\}
-\frac{g\tau uf\lvert\phi\rvert_{h}^{2}}{(\gamma+\lvert\phi\rvert_{h}^{2})(m-q\beta_{0}\tau^{2})}\\
&+\int_{0}^{1}ds\{\frac{iv\beta_{0}g}{I_{s}}(2\lvert\phi\rvert_{h_{s}}^{2}-\tau)+\frac{iv\beta_{0}^{2}g\lvert\phi\rvert_{h_{s}}^{2}}{I_{s}^{2}}(\tau-\lvert\phi\rvert_{h_{s}}^{2})(l-2q\lvert\phi\rvert_{h_{s}}^{2})\}\frac{e^{-sg}}{\int_{0}^{1}e^{-tg}dt}\nabla_{h}^{1,0}\phi \wedge \nabla_{h}^{0,1}\phi^{*}\left(1+\frac{(s-\langle s\rangle)g\lvert\phi\rvert_{h}^{2}}{\gamma+\lvert\phi\rvert_{h}^{2}}\right)^{2}\\
&+\int_{0}^{1}ds\{\frac{iv\beta_{0}g(\tau -\lvert\phi\rvert_{h_{s}}^{2})}{I_{s}(\gamma+\lvert\phi\rvert_{h}^{2})}(\partial\lvert\phi\rvert_{h_{s}}^{2}\wedge\bar{\partial}\lvert\phi\rvert_{h}^{2}+\partial\lvert\phi\rvert_{h}^{2}\wedge\bar{\partial}\lvert\phi\rvert_{h_{s}}^{2})\}\\
&+\{\frac{ig(\gamma-\lvert\phi\rvert_{h}^{2})}{(\gamma+\lvert\phi\rvert_{h}^{2})^{2}}\ -\frac{g(\tau-\lvert\phi\rvert_{h}^{2})iv\beta_{0}\lvert\phi\rvert_{h}^{2}}{(m+l\beta_{0}\lvert\phi\rvert_{h}^{2}-q\beta_{0}\lvert\phi\rvert_{h}^{4})(\gamma+\lvert\phi\rvert_{h}^{2})}\}\nabla_{h}^{1,0}\phi\wedge\nabla_{h}^{0,1}\phi^{*}
\end{split}
\end{equation}

\begin{equation}
\begin{split}
\label{35}
& \implies 0 \geq \frac{g\lvert\phi\rvert_{h}^{2}uf}{m+l\beta_{0}\tau}-\frac{g\tau uf\lvert\phi\rvert_{h}^{2}}{(\gamma+\lvert\phi\rvert_{h}^{2})(m-q\beta_{0}\tau^{2})}+\int_{0}^{1}\frac{(\tau -\lvert\phi\rvert_{h_{s}}^{2})uf\beta_{0}g\lvert\phi\rvert_{h_{s}}^{2}}{(m+l\beta_{0}\tau)^{2}}(l-2q\lvert\phi\rvert_{h_{s}}^{2})ds\\
&+\int_{0}^{1}ds\{\frac{iv\beta_{0}g}{I_{s}}(2\lvert\phi\rvert_{h_{s}}^{2}-\tau)+\frac{iv\beta_{0}^{2}g\lvert\phi\rvert_{h_{s}}^{2}}{I_{s}^{2}}(\tau-\lvert\phi\rvert_{h_{s}}^{2})(l-2q\lvert\phi\rvert_{h_{s}}^{2})\}\frac{e^{-sg}}{\int_{0}^{1}e^{-tg}dt}\nabla_{h}^{1,0}\phi \wedge \nabla_{h}^{0,1}\phi^{*}\left(1+\frac{(s-\langle s\rangle)g\lvert\phi\rvert_{h}^{2}}{\gamma+\lvert\phi\rvert_{h}^{2}}\right)^{2}\\
&+\int_{0}^{1}ds\{\frac{iv\beta_{0}g(\tau -\lvert\phi\rvert_{h_{s}}^{2})}{I_{s}(\gamma+\lvert\phi\rvert_{h}^{2})}(\partial\lvert\phi\rvert_{h_{s}}^{2}\wedge\bar{\partial}\lvert\phi\rvert_{h}^{2}+\partial\lvert\phi\rvert_{h}^{2}\wedge\bar{\partial}\lvert\phi\rvert_{h_{s}}^{2})\}\\
&+\{\frac{ig(\gamma-\lvert\phi\rvert_{h}^{2})}{(\gamma+\lvert\phi\rvert_{h}^{2})^{2}}-\frac{g(\tau-\lvert\phi\rvert_{h}^{2})iv\beta_{0}\lvert\phi\rvert_{h}^{2}}{(m+l\beta_{0}\lvert\phi\rvert_{h}^{2}-q\beta_{0}\lvert\phi\rvert_{h}^{4})(\gamma+\lvert\phi\rvert_{h}^{2})}\}\nabla_{h}^{1,0}\phi\wedge\nabla_{h}^{0,1}\phi^{*}.
\end{split}
\end{equation}

Now
\[\int_{0}^{1}\left\lvert\partial\lvert\phi\rvert_{h}^{2}\wedge\bar{\partial}\lvert\phi\rvert_{h_{s}}^{2}\right\rvert ds\]
\[\leq \int_{0}^{1}\left\lvert\partial\lvert\phi\rvert_{h}^{2}\right\rvert \left\lvert\bar{\partial}\lvert\phi\rvert_{h_{s}}^{2}\right\rvert \omega_{\Sigma}  ds\]
\[\leq \int_{0}^{1}\lvert\phi\rvert_{h}\lvert\phi\rvert_{h_{s}}\left\lvert\nabla_{h}^{1,0}\phi\right\rvert \left\lvert\nabla_{h_{s}}^{0,1}\phi^{*}\right\rvert \omega_{\Sigma} ds\] 

\[\leq \tau\int_{0}^{1}\left\lvert\nabla_{h}^{1,0}\phi\right\rvert \left\lvert\nabla_{h}^{0,1}\phi^{*} -\bar{\partial}\ln (\int_{0}^{1}e^{-gt}dt)\phi^{*}-s\bar{\partial} g \phi^{*} \right\rvert \omega_{\Sigma} ds\]

\[\leq \tau\int_{0}^{1}\left\lvert\nabla_{h}^{1,0}\phi\right\rvert \left\lvert\nabla_{h}^{0,1}\phi^{*} +(\langle s \rangle -s)\bar{\partial} g \phi^{*} \right\rvert \frac{\sqrt{e^{-sg}}}{\sqrt{\int_{0}^{1}e^{-tg}dt}}\omega_{\Sigma} ds.\]
Using \ref{32}, we have 

\[\leq \tau \left\lvert\nabla_{h}^{1,0}\phi\right\rvert \left\lvert\nabla_{h}^{0,1}\phi^{*}\right\rvert \int_{0}^{1}\left\lvert\left(1+\frac{(s-\langle s\rangle)g\lvert\phi\rvert_{h}^{2}}{\gamma+\lvert\phi\rvert_{h}^{2}}\right)\right\rvert\frac{\sqrt{e^{-sg}}}{\sqrt{\int_{0}^{1}e^{-tg}dt}}\omega_{\Sigma} ds.\]
Putting \ref{41}, we get
\[\leq \tau \left\lvert\nabla_{h}^{1,0}\phi\right\rvert \left\lvert\nabla_{h}^{0,1}\phi^{*}\right\rvert \int_{0}^{1}\left(1+\frac{2\tau}{\gamma}(1+e^{C_{1}})\right)\frac{\sqrt{e^{-sg}}}{\sqrt{\int_{0}^{1}e^{-tg}dt}}\omega_{\Sigma} ds\]

\[\leq \tau \left\lvert\nabla_{h}^{1,0}\phi\right\rvert \left\lvert\nabla_{h}^{0,1}\phi^{*}\right\rvert\left(1+\frac{2\tau}{\gamma}(1+e^{C_{1}})\right) \int_{0}^{1}\frac{\sqrt{e^{-sg}}}{\sqrt{\int_{0}^{1}e^{-tg}dt}}\omega_{\Sigma} ds.\]
Using Cauchy-Schwarz inequality, we have
\begin{equation}
\label{36}
\int_{0}^{1}\left\lvert\partial\lvert\phi\rvert_{h}^{2}\wedge\bar{\partial}\lvert\phi\rvert_{h_{s}}^{2}\right\rvert ds \leq \tau \left\lvert\nabla_{h}^{1,0}\phi\right\rvert \left\lvert\nabla_{h}^{0,1}\phi^{*}\right\rvert \left(1+\frac{2\tau}{\gamma}(1+e^{C_{1}})\right)\omega_{\Sigma}
\end{equation}
and similarly
\begin{equation}
\label{37}
\int_{0}^{1}\left\lvert\partial\lvert\phi\rvert_{h_{s}}^{2}\wedge\bar{\partial}\lvert\phi\rvert_{h}^{2}\right\rvert ds \leq \tau \left\lvert\nabla_{h}^{1,0}\phi\right\rvert \left\lvert\nabla_{h}^{0,1}\phi^{*}\right\rvert \left(1+\frac{2\tau}{\gamma}(1+e^{C_{1}})\right) \omega_{\Sigma}.
\end{equation}
We can write \ref{35} as
\begin{equation}
\label{38}
0 \geq A guf+ Big \nabla_{h}^{1,0}\phi\wedge\nabla_{h}^{0,1}\phi^{*}+\int_{0}^{1}ds\{\frac{iv\beta_{0}g(\tau -\lvert\phi\rvert_{h_{s}}^{2})}{I_{s}(\gamma+\lvert\phi\rvert_{h}^{2})}(\partial\lvert\phi\rvert_{h_{s}}^{2}\wedge\bar{\partial}\lvert\phi\rvert_{h}^{2}+\partial\lvert\phi\rvert_{h}^{2}\wedge\bar{\partial}\lvert\phi\rvert_{h_{s}}^{2})\}
\end{equation}
where 
\[A= \frac{\lvert\phi\rvert_{h}^{2}}{m+l\beta_{0}\tau}-\frac{\tau \lvert\phi\rvert_{h}^{2}}{(\gamma+\lvert\phi\rvert_{h}^{2})(m-q\beta_{0}\tau^{2})}+\int_{0}^{1}\frac{(\tau -\lvert\phi\rvert_{h_{s}}^{2})\beta_{0}\lvert\phi\rvert_{h_{s}}^{2}}{(m+l\beta_{0}\tau)^{2}}(l-2q\lvert\phi\rvert_{h_{s}}^{2})ds\]
and
\begin{equation}
	\label{B}
\begin{split}
&B=\int_{0}^{1}ds\left[\frac{v\beta_{0}}{I_{s}}(2\lvert\phi\rvert_{h_{s}}^{2}-\tau)+\frac{v\beta_{0}^{2}\lvert\phi\rvert_{h_{s}}^{2}}{I_{s}^{2}}(\tau-\lvert\phi\rvert_{h_{s}}^{2})(l-2q\lvert\phi\rvert_{h_{s}}^{2})\right]\frac{e^{-sg}}{\int_{0}^{1}e^{-tg}dt}\left(1+\frac{(s-\langle s\rangle)g\lvert\phi\rvert_{h}^{2}}{\gamma+\lvert\phi\rvert_{h}^{2}}\right)^{2}\\
&+\left[\frac{(\gamma-\lvert\phi\rvert_{h}^{2})}{(\gamma+\lvert\phi\rvert_{h}^{2})^{2}}\ -\frac{(\tau-\lvert\phi\rvert_{h}^{2})v\beta_{0}\lvert\phi\rvert_{h}^{2}}{(m+l\beta_{0}\lvert\phi\rvert_{h}^{2}-q\beta_{0}\lvert\phi\rvert_{h}^{4})(\gamma+\lvert\phi\rvert_{h}^{2})}\right].
\end{split}
\end{equation}
We recall that $l-2q\lvert\phi\rvert_{h_{s}}^{2}=\frac{\beta_{0}}{(2\pi)^{2}}(\tau-\lvert\phi\rvert_{h_{s}}^{2})\geq 0$. Now we can choose $\gamma$ (depending only on $\tau,\lambda,\alpha$) large enough so that
 \[A\geq 0.\] 
 \ref{41} implies that \ref{B} is bounded and \ref{36} , \ref{37} implies that \\
 	$i\int_{0}^{1}ds\{\frac{v\beta_{0}(\tau -\lvert\phi\rvert_{h_{s}}^{2})}{I_{s}(\gamma+\lvert\phi\rvert_{h}^{2})}(\partial\lvert\phi\rvert_{h_{s}}^{2}\wedge\bar{\partial}\lvert\phi\rvert_{h}^{2}+\partial\lvert\phi\rvert_{h}^{2}\wedge\bar{\partial}\lvert\phi\rvert_{h_{s}}^{2})\}$ is bounded. Now
 	\begin{equation}
 		iB\nabla_{h}^{1,0}\phi\wedge\nabla_{h}^{0,1}\phi^{*}+i\int_{0}^{1}ds\{\frac{v\beta_{0}(\tau -\lvert\phi\rvert_{h_{s}}^{2})}{I_{s}(\gamma+\lvert\phi\rvert_{h}^{2})}(\partial\lvert\phi\rvert_{h_{s}}^{2}\wedge\bar{\partial}\lvert\phi\rvert_{h}^{2}+\partial\lvert\phi\rvert_{h}^{2}\wedge\bar{\partial}\lvert\phi\rvert_{h_{s}}^{2})\}
 	\end{equation}
 	  is positive when $\beta_{0}=0$. So for small $\beta_{0}$, we have the following.
\begin{equation}
\label{39}
iB\nabla_{h}^{1,0}\phi\wedge\nabla_{h}^{0,1}\phi^{*}+i\int_{0}^{1}ds\{\frac{v\beta_{0}(\tau -\lvert\phi\rvert_{h_{s}}^{2})}{I_{s}(\gamma+\lvert\phi\rvert_{h}^{2})}(\partial\lvert\phi\rvert_{h_{s}}^{2}\wedge\bar{\partial}\lvert\phi\rvert_{h}^{2}+\partial\lvert\phi\rvert_{h}^{2}\wedge\bar{\partial}\lvert\phi\rvert_{h_{s}}^{2})\} \geq 0.
\end{equation}
Since the line bundle is of degree $1$, either $\phi(p) \neq 0$ or $\lvert\nabla\phi\rvert(p)\neq 0$. This implies that $g(p)\leq 0$ which contradicts \ref{40}. Therefore $g\leq 0$. The same argument applied to a point of minimum of $g$ shows that $g\geq 0$. Hence $g=0$ showing uniqueness for small $\beta_{0}$. 
\end{proof}
The inequality $4.65$ in \cite{vamsi2} is not true. As remarked in the  introduction, the gap can be fixed using the method done in this paper.\\
We now complete the proof of uniqueness.
\begin{lemma}
	If there exists a $\beta_{0}\in [0,1]$ such that $h_{\beta_{0}}=\tilde{h}_{\beta_{0}}$ then $h_{1}=h_{2}$.
\end{lemma}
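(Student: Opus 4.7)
The plan is a standard open--closed connectedness argument in the continuity parameter $\beta$. I define $S := \{\beta \in [\beta_0,\alpha] : h_\beta = \tilde h_\beta\}$, where $h_\beta$ is the forward-path solution of \ref{28} (normalized so that $h_\alpha = h_1$) and $\tilde h_\beta$ the backward-path solution of \ref{21} (with $\tilde h_\alpha = h_2$). By hypothesis $\beta_0 \in S$, so $S$ is non-empty. I will show $S$ is both open and closed in $[\beta_0,\alpha]$; this forces $S = [\beta_0,\alpha]$, and in particular $\alpha \in S$ yields $h_1 = h_2$.

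For openness, suppose $\beta_\ast \in S$ with common solution $h_\ast := h_{\beta_\ast} = \tilde h_{\beta_\ast}$. The openness step of the continuity method from \cite{Vamsi1} shows that the linearization at $h_\ast$ of the operator defining \ref{28} is an isomorphism on the relevant H\"older spaces, so the implicit function theorem supplies a unique smooth family of solutions of \ref{28} near $(\beta_\ast, h_\ast)$. Since $\beta \mapsto h_\beta$ and $\beta \mapsto \tilde h_\beta$ are continuous and both pass through $h_\ast$ at $\beta_\ast$, they must coincide with this local family, hence with each other, in an open neighborhood of $\beta_\ast$.

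For closedness, let $\beta_n \in S$ with $\beta_n \to \beta_\ast \in [\beta_0, \alpha]$. The vortex-CYM parameters satisfy $b - cd \geq 0$ and $b - (k+c)d = 0$, so Theorem \ref{2} together with Lemma \ref{15} supplies uniform $C^{2,\beta'}$ bounds on the potentials $\psi_{\beta_n}$ and $\tilde\psi_{\beta_n}$, with constants depending only on $[\beta_0, \alpha]$. Passing to a subsequence, both families converge in $C^2$; since $h_{\beta_n} = \tilde h_{\beta_n}$ for every $n$, their limits agree, and continuity of the two continuity paths identifies these limits with $h_{\beta_\ast}$ and $\tilde h_{\beta_\ast}$ respectively. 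Hence $\beta_\ast \in S$.

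The main obstacle is invertibility of the linearization required for the openness step, but since $h_\ast$ is a bona fide solution of \ref{28} at $\beta = \beta_\ast$, the linearization computation already carried out in \cite{Vamsi1} applies verbatim. No new linear analysis is needed, and the connectedness argument goes through to yield $h_1 = h_2$.
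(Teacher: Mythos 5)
Your proposal is correct and follows essentially the same open--closed connectedness argument as the paper: non-emptiness from the hypothesis, openness via the linearization/inverse function theorem from \cite{Vamsi1}, and closedness via the \emph{a priori} estimates of Theorem \ref{2} and Lemma \ref{15}. The paper's own proof is just a terser version of exactly this.
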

\begin{proof}
	Let $T\subset[\beta_{0},1]$ be the set of all $\alpha$ such that $h_{\alpha}=\tilde{h}_{\alpha}$. Then $T$ satisfies the following.\\
		(1) It is non-empty : $\beta_{0}\in T$\\
		(2) It is open : The proof of openness(see \cite{Vamsi1}) and the Inverse Function Theorem of Banach manifolds shows that locally the solution is unique and hence $T$ is open.\\
		(3) It is closed : The \emph{a priori} estimates show that $T$ is closed.
		
		Therefore $T=[\beta_{0},1]$.	
\end{proof}
\subsection{$J$-Vortex Equation}
In \cite{Takahashi}, Takahashi introduced $J$-equation on holomorphic vector bundles and came up with the $J-$ vortex equation($6.21$ in \cite{Takahashi}). The continuity path ($6.28$ in \cite{Takahashi}) in \cite{Takahashi} is 
  \[F_{h_{t}}=\omega_{\Sigma}+\frac{\sqrt{-1}}{2\pi}\partial\bar{\partial}\psi_{t}=2(1-\lvert\phi\rvert_{h_{t}}^{2})\frac{s u^{1-t}\omega_{\Sigma}+\frac{\sqrt{-1}c'^{2}t}{\pi}D_{t}'\phi D_{t}''\phi^{*t}}{(4c'r_{2}-2c't\lvert\phi\rvert_{h_{t}}^{2}-1+4c')(4c'r_{2}+2c't\lvert\phi\rvert_{h_{t}}^{2}-1)},\]
  where $u=\frac{1}{\alpha(1-\lvert\phi\rvert_{h_{\Sigma}}^{2})}$ and $\alpha=\frac{2s}{(4c'r_{2}-1+4c')(4c'r_{2}-1)}$. We replaced $c$ by $c'$ in the continuity path to avoid confusion in notation. This is exactly the type of equations that we considered. Here $a=(4c'r_{2}-1)(4c'r_{2}-1+4c'), b=8c'^{2}, c=4c'^{2}, e=2s>0,k=\frac{2c'^{2}}{\pi}, d=1$, where $c'=\frac{2r_{1}+1+2s(2r_{2}+1)}{4(2r_{1}+1)r_{2}+4r_{1}(r_{2}+1)}$, $r_{1},r_{2}$ are positive integers and $s$ is a positive real number. Now $b-cd=4c'^{2}>0$ and $b-(k+ct)d=4c'^{2}(1-t)+4c'^{2}-\frac{2c'^{2}}{\pi}\geq 0$ and $de>a$ which follows from lemma $6.17$ in \cite{Takahashi}. So the \textit{a priori} estimates for this equations follow. 
\subsection{Vector Bundle Version Of The Monge Amp\`{e}re Equation.}\label{subsec:vbMA}
In \cite{vamsi2}, Pingali considered the vector bundle version of the Monge Amp\`{e}re equation. The continuity path $(4.20)$ in \cite{vamsi2} is 
\[i\Theta_{h_{t}}=(1-\lvert\phi\rvert_{h_{t}}^{2})\frac{\mu u^{1-t}\omega_{\Sigma}+it\nabla_{t}^{(1,0)}\phi\wedge\nabla_{t}^{(0,1)}\phi^{*}}{(2r_{2}+t\lvert\phi\rvert_{h_{t}}^{2})(2+2r_{2}-t\lvert\phi\rvert_{h_{t}}^{2})},\]
where $u=\frac{1}{\alpha(1-\lvert\phi\rvert_{h_{0}}^{2})}$ and $\alpha=\frac{\mu}{2r_{2}(2+2r_{2})}, \mu=2(2r_{1}r_{2}+r_{1}+r_{2})$. This is exactly the type of equations that we considered. Here $a=2r_{2}(2+2r_{2}),b=2,c=1,d=1,e=\mu, k=1$ and $b-cd>0, b-(k+ct)d\geq 0,$ and $de>a$ since $r_{1}>r_{2}$. So the \textit{a priori} estimates for this equations follow.  

\section{Gieseker Stability and Almost Hermitian Einstein Metric} \label{gieskersec}

Consider a genus-$g$ compact Riemann surface $\Sigma$ endowed with a metric whose $(1,1)$ form $\omega_{\Sigma}=i\Theta_{0}$, where $\Theta_{0}$ is the curvature of a metric $h_{0}$ on a degree $1$ line bundle $L$. Let $\mathbb{C}\mathbb{P}^{1}$ be endowed with the Fubini-Study metric $\omega_{FS}$ which is the curvature of a metric $h_{FS}$ on $\mathcal{O}(1)$.

\par 
Consider the rank-$2$ vector bundle 
\[E=\pi_{1}^{*}((r_{1}+1)L)\otimes\pi_{2}^{*}(r_{2}\mathcal{O}(2))\oplus\pi_{1}^{*}(r_{1}L)\otimes\pi_{2}^{*}((r_{2}+1)\mathcal{O}(2)),\]
where $r_{1},r_{2}\geq 2$ and $\pi_{1},\pi_{2}$ are projections from $\Sigma\times\mathbb{C}\mathbb{P}^{1}$ to $\Sigma$ and $\mathbb{C}\mathbb{P}^{1}$ respectively. Endow $E$ with a holomorphic structure arising from the second fundamental form $\beta$ just like in Section \ref{sec:intro}. \par

Now we calculate $[e^{(\frac{i}{2\pi}R_{E}+k\omega I_{E})}Td_{X}]^{TOP}$ for the bundle $E$ over the manifold $\Sigma\times\mathbb{C}\mathbb{P}^{1}$ with $\omega=\frac{\tau}{2}\omega_{\Sigma}+2\omega_{FS},$ where $\tau$ is an even integer.
We know that \[Td(\Sigma\times\mathbb{C}\mathbb{P}^{1})=1+\frac{c_{1}(\Sigma)}{2}+\frac{c_{1}(\mathbb{C}\mathbb{P}^{1})}{2}+\frac{c_{1}(\Sigma)\wedge c_{1}(\mathbb{C}\mathbb{P}^{1})}{4},\]
\[e^{(\frac{i}{2\pi}R_{E}+k\omega I_{E})}=(1+k\omega)I_{E}+\frac{i}{2\pi}(1+k\omega)\wedge R_{E}+\frac{1}{2(2\pi)^{2}}(iR_{E})^{2}+\tau k^{2}\omega_{\Sigma}\wedge\omega_{FS}I_{E}.\]
Therefore
\[
[e^{(\frac{i}{2\pi}R_{E}+k\omega I_{E})}Td_{X}]^{TOP}=\frac{c_{1}(\Sigma)\wedge c_{1}(\mathbb{C}\mathbb{P}^{1})}{4}I_{E}+\frac{kc_{1}(\Sigma)\wedge \omega}{2}I_{E}+\frac{kc_{1}(\mathbb{C}\mathbb{P}^{1})\wedge\omega}{2}I_{E}+\frac{i}{2\pi}k\omega\wedge R_{E}+\frac{i}{2(2\pi)}c_{1}(\Sigma)\wedge R_{E}\]
\[ 
+\frac{i}{2(2\pi)}c_{1}(\mathbb{C}\mathbb{P}^{1})\wedge R_{E} + k^{2}\tau \omega_{\Sigma}\wedge\omega_{FS}I_{E}+\frac{(iR_{E})^{2}}{2(2\pi)^{2}}.\]
Here we note some equalities. $c_{1}(\Sigma)=\alpha\omega_{\Sigma},$ $c_{1}(\mathbb{C}\mathbb{P}^{1})=2\omega_{FS},$ $c_{1}(S)=(r_{1}+1)\omega_{\Sigma}+2r_{2}\omega_{FS},$ $c_{1}(E)=(2r_{1}+1)\omega_{\Sigma}+(4r_{2}+2)\omega_{FS},ch_{2}(E)=2\{(r_{1}+1)r_{2}+r_{1}(r_{2}+1)\}\omega_{\Sigma}\wedge\omega_{FS}, ch_{2}(S)=2r_{2}(r_{1}+1)\omega_{\Sigma}\wedge\omega_{FS}$ where $\alpha=2-2g$,$S=\pi_{1}^{*}((r_{1}+1)L)\otimes\pi_{2}^{*}(r_{2}\mathcal{O}(2))$. \\
We now recall the definition of Gieseker stability.
\begin{defn}
	Let $E$ be a rank $r$ holomorphic vector bundle over a projective variety $X$ with ample line bundle $L',$ $E$ is called Gieseker stable if for any nontrivial coherent subsheaf $S$ of $E,$ we have 
	\[\frac{\chi(X,S\otimes L'^{k})}{rankS}<\frac{\chi(X,E\otimes L'^{k})}{rankE}\]
	for large enough $k.$
\end{defn}
\begin{theorem}\label{giesekerresult}
	If $k$ satisfies $k(\tau-2)+(\alpha-1)+2(r_{1}-r_{2})>0$ and $k\tau+\alpha>0$, then the following are equivalent.\\
	(1) $E$ is Gieseker stable.\\
	(2) There exists an almost Hermitian Einstein metric on $E$.\\
	Moreover, the solution is unique among all $SU(2)-$invariant solutions.
\end{theorem}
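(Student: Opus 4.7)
The plan is to impose $SU(2)$-invariance so that equation \ref{17} reduces to a scalar equation on $\Sigma$ for a Hermitian metric on $L$, and then to check that this reduction matches the form of \ref{1} so that Theorem \ref{2} applies. An $SU(2)$-invariant metric $H$ on $E$ decomposes smoothly as $H_1 \oplus H_2$ on the underlying direct sum, with each $H_j$ determined by a Hermitian metric on $L$ together with the fixed $SU(2)$-invariant metrics on $\mathcal{O}(2)$. Expanding $[e^{(\frac{i}{2\pi}R_E + k\omega I_E)}Td_X]^{TOP}$ with $\omega = \frac{\tau}{2}\omega_\Sigma + 2\omega_{FS}$ and using the Chern class identities listed just before the statement, the diagonal part of the almost Hermitian Einstein equation becomes, after normalization, a single scalar equation of the form \ref{1}, where the coefficients $a,b,c,d,e,k$ are explicit rational functions of $k,\tau,\alpha,r_1,r_2$.

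For the existence direction I would run a continuity method starting at $t=0$, where $\beta$ is turned off and the equation decouples into standard scalar PDEs on $\Sigma$ for Hermitian metrics on $L$ that admit unique solutions by classical theory. Turning on $\beta$ via $t \in [0,1]$ gives equation \ref{1}. Openness follows by the inverse function theorem along the same lines as in \cite{Vamsi1}, and closedness is precisely Theorem \ref{2} applied to this path, provided the sign conditions $b-cd \geq 0$, $b-(k+ct)d \geq 0$, and $de>a$ hold for the coefficients arising in the reduction.

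The key dictionary between Gieseker stability and the hypotheses of Theorem \ref{2} is that, up to the $SU(2)$ action, the only nontrivial invariant subsheaves of $E$ are twists of $S = \pi_{1}^{*}((r_{1}+1)L)\otimes\pi_{2}^{*}(r_{2}\mathcal{O}(2))$. Plugging the Chern character formulas listed just before the statement into the Gieseker inequality $\chi(X,S\otimes L'^{k})/\mathrm{rank}(S) < \chi(X,E\otimes L'^{k})/\mathrm{rank}(E)$ reduces it, after elementary algebra, to $k(\tau-2)+(\alpha-1)+2(r_1-r_2)>0$ together with $k\tau+\alpha>0$. The direction \emph{almost Hermitian Einstein implies Gieseker stable} then follows by pairing equation \ref{17} with the orthogonal projection onto any $SU(2)$-invariant subsheaf and integrating, in the standard Kobayashi-Hitchin manner. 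The converse uses these same two inequalities to verify the sign hypotheses needed by Theorem \ref{2}, closing up the continuity path.

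Uniqueness among $SU(2)$-invariant solutions I would prove by adapting the maximum-principle argument of Section 3.1 on the vortex-CYM equations: given two solutions $h_1, h_2$ at the final parameter, set $h_2 = h_1 e^{-g}$, run the backward continuity path starting from $h_2$, and apply the maximum principle to the auxiliary function $\tilde g = g(\gamma + |\phi|_h^2)$ with $\gamma$ large and the continuity parameter $\beta_0$ small to conclude $g \equiv 0$ at $\beta_0$. Openness of the subset of parameters where the two paths coincide, together with the a priori estimates (closedness), then propagates the equality up to the desired parameter, giving $h_1 = h_2$. The hard part I expect is the bookkeeping in this final step: the particular form of the denominator in equation \ref{1} for the coefficients arising here forces the absorption estimates to be redone carefully, and the Gieseker stability inequalities must enter at exactly the spots where positivity of the leading terms in the maximum-principle expansion is required.
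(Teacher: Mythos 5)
Your overall strategy (reduce to a scalar vortex-type equation, continuity method, Theorem \ref{2} for closedness, maximum-principle uniqueness) points in the right direction, but it skips the one structural observation on which the paper's proof actually turns: the almost Hermitian Einstein equation \ref{17} for this bundle is a \emph{matrix} equation, and the way it is tamed is by completing the square. Since the correction terms $k\omega I_E$ and $\tfrac{1}{2}(c_1(\Sigma)+c_1(\mathbb{C}\mathbb{P}^1))$ are multiples of the identity, the left-hand side of \ref{17} equals $\tfrac12\bigl(\tfrac{i}{2\pi}R_E+k\omega I_E+\tfrac{c_1(\Sigma)+c_1(\mathbb{C}\mathbb{P}^1)}{2}\bigr)^2$, which is the square of the curvature of the \emph{twisted} bundle $E\otimes(\tfrac{k\tau}{2}L\otimes 2k\mathcal{O}(1))\otimes(\tfrac{\alpha}{2}L\otimes\mathcal{O}(1))$ --- again a vortex bundle of the same shape with new parameters $R_1=r_1+\tfrac{k\tau+\alpha}{2}$, $R_2=r_2+k+\tfrac12$. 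Thus \ref{17} becomes exactly the vbMA equation $(i\Theta)^2=\eta I$ for a vortex bundle, and Theorems 1.4 and 4.2 of \cite{vamsi2} apply verbatim: the scalar Monge--Amp\`ere vortex equation on $\Sigma$ is solvable iff $R_1>R_2$, and its solution produces a metric solving the full matrix equation, off-diagonal ($\beta$-dependent) components included. Your claim that ``the diagonal part \ldots becomes a single scalar equation of the form \ref{1}'' leaves precisely this matrix-to-scalar step unjustified; verifying that the vortex ansatz kills the off-diagonal part of a quadratic curvature equation is the content of Theorem 4.2 of \cite{vamsi2} and is not automatic. Relatedly, the Gieseker inequality does not need to be matched \emph{ad hoc} to the sign hypotheses $b-cd\ge0$, $de>a$ of Theorem \ref{2}: after the twist it is literally the stability condition $R_1>R_2$ of the cited theorem, and $k\tau+\alpha>0$ is a separate hypothesis ensuring the twisted ranks are admissible, not a consequence of the Gieseker inequality as you assert.

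Two smaller but real issues. First, for $(2)\Rightarrow(1)$ the paper invokes \cite{Leung}; your proposed pairing of \ref{17} with projections onto $SU(2)$-invariant subsheaves would at best yield stability with respect to invariant subsheaves, which is strictly weaker than the Gieseker stability claimed in (1). Second, your assertion that the only nontrivial invariant subsheaves of $E$ are twists of $S$ is neither proved nor needed: the paper only \emph{uses} the Gieseker inequality for the single subbundle $S$ (so the implication $(1)\Rightarrow(2)$ requires less than full stability), and recovers full stability on the way back via Leung. The uniqueness sketch is consistent with the paper, which obtains it from the uniqueness clause of Theorem 1.4 of \cite{vamsi2} applied to the twisted bundle rather than by redoing the maximum-principle computation.
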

\begin{proof}
	$2\Rightarrow 1$ follows from \cite{Leung}. We only prove $1\Rightarrow 2$. To this end, we only need to use the Gieseker stability assumption for the $SU(2)$-invariant subbundle $S=\pi_{1}^{*}((r_{1}+1)L)\otimes\pi_{2}^{*}(r_{2}\mathcal{O}(2))$. The assumption reads as follows.
	\begin{equation}
		\label{20}
		2\chi(X,S\otimes L'^{k})<\chi(X,E\otimes L'^{k}).
	\end{equation}
	Now 
	\[\chi(X,E\otimes L'^{k})=\int Tr([e^{(\frac{i}{2\pi}R_{E}+k\omega I_{E})}Td_{X}]^{TOP})\]
	\[= \int Tr(\frac{c_{1}(\Sigma)\wedge c_{1}(\mathbb{C}\mathbb{P}^{1})}{4}I_{E}+\frac{kc_{1}(\Sigma)\wedge \omega}{2}I_{E}+\frac{kc_{1}(\mathbb{C}\mathbb{P}^{1})\wedge\omega}{2}I_{E}+\frac{i}{2\pi}k\omega\wedge R_{E}+\frac{i}{2(2\pi)}c_{1}(\Sigma)\wedge R_{E})\]
	\[ 
	+\int Tr(\frac{i}{2(2\pi)}c_{1}(\mathbb{C}\mathbb{P}^{1})\wedge R_{E} + k^{2}\tau \omega_{\Sigma}\wedge\omega_{FS}I_{E}+\frac{(iR_{E})^{2}}{2(2\pi)^{2}}).\]
	Hence Inequality \ref{20} becomes 
	\[2\int k\omega\wedge c_{1}(S)+\int c_{1}(\Sigma)\wedge c_{1}(S)+\int c_{1}(\mathbb{C}\mathbb{P}^{1})\wedge c_{1}(S)+2\int ch_{2}(S)\]
	\[<\int k\omega\wedge c_{1}(E)+\frac{1}{2}\int c_{1}(\Sigma)\wedge c_{1}(E)+\frac{1}{2}\int c_{1}(\mathbb{C}\mathbb{P}^{1})\wedge c_{1}(E)+\int ch_{2}(E)\]
	\[\implies 2k\int (r_{2}\tau+2(r_{1}+1))\omega_{\Sigma}\wedge\omega_{FS}+\int2\alpha r_{2}\omega_{\Sigma}\wedge\omega_{FS}+\int 2(r_{1}+1)\omega_{\Sigma}\wedge\omega_{FS}+2\int ch_{2}(S)\]
	\[<k\int \{(2r_{2}+1)\tau+2(2r_{1}+1)\}\omega_{\Sigma}\wedge\omega_{FS}+\frac{1}{2}\int \alpha(4r_{2}+2)\omega_{\Sigma}\wedge\omega_{FS}+\frac{1}{2}\int 2(2r_{1}+1)\omega_{\Sigma}\wedge
	\omega_{FS}+\int ch_{2}(E)\]
	\[\implies 0<k(\tau-2)\int\omega_{\Sigma}\wedge\omega_{FS}+(\alpha-1+2r_{1}-2r_{2})\int\omega_{\Sigma}\wedge\omega_{FS}\]
	\begin{gather}
		\Rightarrow k(\tau-2)+(\alpha-1)+2(r_{1}-r_{2})>0.
		\label{eq:stability}
	\end{gather}
	Next we write the almost Hermitian Einstein equation for this bundle. The equation is
	\[\frac{1}{2}(\frac{i}{2\pi}R_{E}+k\omega I_{E})^{2}+(\frac{i}{2\pi}R_{E}+k\omega I_{E})\wedge(\frac{c_{1}(\Sigma)+c_{1}(\mathbb{C}\mathbb{P}^{1})}{2})+\frac{c_{1}(\Sigma)\wedge c_{1}(\mathbb{C}\mathbb{P}^{1})}{4}\]
	\[=\frac{\tau}{2}\frac{\chi(X,E\otimes L'^{k})}{Vol(X)}\omega_{\Sigma}\wedge\omega_{FS} I_{E}\]
	
	\[\implies (\frac{i}{2\pi}R_{E}+k\omega I_{E}+\frac{c_{1}(\Sigma)+c_{1}(\mathbb{C}\mathbb{P}^{1})}{2})^{2}=\frac{\tau}{Vol(X)}\chi(X,E\otimes L'^{k})\omega_{\Sigma}\wedge\omega_{FS} I_{E}\]
	
	\[\implies (\frac{i}{2\pi}R_{E}+k\omega I_{E}+\frac{c_{1}(\Sigma)+c_{1}(\mathbb{C}\mathbb{P}^{1})}{2})^{2}=\frac{\tau}{Vol(x)}(\int\alpha\omega_{\Sigma}\wedge\omega_{FS}+\int 2\alpha k\omega_{\Sigma}\wedge\omega_{FS}+\int k\tau\omega_{\Sigma}\wedge\omega_{FS})\omega_{\Sigma}\wedge\omega_{FS} I_{E}\]
	\[+\frac{\tau}{Vol(X)}(\int k(\tau(2r_{2}+1)+2(2r_{1}+1))\omega_{\Sigma}\wedge\omega_{FS}
	+\int \alpha(2r_{2}+1)\omega_{\Sigma}\wedge\omega_{FS})\omega_{\Sigma}\wedge\omega_{FS} I_{E}\]
	\[+\frac{\tau}{Vol(X)}(\int (2r_{1}+1)\omega_{\Sigma}\wedge\omega_{FS}+\int 2k^{2}\tau \omega_{\Sigma}\wedge\omega_{FS}+\int ch_{2}(E))\omega_{\Sigma}\wedge\omega_{FS} I_{E}.\]
	
	Now using $\int\omega_{\Sigma}\wedge\omega_{FS}=\frac{Vol(X)}{\tau},$ we have
	\begin{equation}
		\label{27}
		\implies  (\frac{i}{2\pi}R_{E}+k\omega I_{E}+\frac{c_{1}(\Sigma)+c_{1}(\mathbb{C}\mathbb{P}^{1})}{2})^{2}=\{\alpha+2\alpha k+k\tau+k(\tau(2r_{2}+1)+2(2r_{1}+1))\}\omega_{\Sigma}\wedge\omega_{FS} I_{E}\end{equation}
	\[+\{\alpha(2r_{2}+1)+2r_{1}+1+2k^{2}\tau+2(r_{1}(r_{2}+1)+r_{2}(r_{1}+1))\}\omega_{\Sigma}\wedge\omega_{FS} I_{E}.\]
	\\
	The term $\frac{i}{2\pi}R_{E}+k\omega I_{E}+\frac{c_{1}(\Sigma)+c_{1}(\mathbb{C}\mathbb{P}^{1})}{2}$ equals $\frac{i}{2\pi}R_{E}+k\omega I_{E}+\frac{\alpha}{2}\omega_{\Sigma}+\omega_{FS}.$\\ But $\frac{i}{2\pi}R_{E}+k\omega I_{E}+\frac{\alpha}{2}\omega_{\Sigma}+\omega_{FS}$ is the curvature of the bundle\\
	$E\otimes(\frac{k\tau}{2}L\otimes 2k \mathcal{O}(1))\otimes (\frac{\alpha}{2}L\otimes \mathcal{O}(1)).$ \\A small calculation shows that $E\otimes(\frac{k\tau}{2}L\otimes 2k \mathcal{O}(1))\otimes (\frac{\alpha}{2}L\otimes \mathcal{O}(1))$ equals	\\
	$\pi_{1}^{*}((R_{1}+1)L)\otimes\pi_{2}^{*}(R_{2}\mathcal{O}(2))\oplus\pi_{1}^{*}(R_{1}L)\otimes\pi_{2}^{*}((R_{2}+1)\mathcal{O}(2)),$
	where $R_{1}=r_{1}+\frac{k\tau+\alpha}{2},R_{2}=r_{2}+k+\frac{1}{2}.$\\
	Now $R_{1}-R_{2}>0\implies k(\tau-2)+(\alpha-1)+2(r_{1}-r_{2})>0.$ \par
	We invoke two theorems of \cite{vamsi2}. Theorem $1.4$ of \cite{vamsi2} is as follows :
	\begin{theorem}
		Let $(L,h_{0})$ be a holomorphic line bundle over a compact Riemann surface $M$ such that its curvature $\Theta_{0}$ defines a K\"ahler form $\omega_{\Sigma}=i\Theta_{0}$ over $M$. Assuming the degree($L$) is equal to $1,$ $r_{1},r_{2}\geq 2$ are integers and $\phi\in H^{0}(M,L)$ which is not identically zero, the following are equivalent.
		\begin{enumerate}
			\item Stability : $r_{1}>r_{2}.$
			\item  Existence : There exists a smooth metric $h$ on $L$ such that the curvature $\Theta_{h}$ of its Chern connection $\nabla_{h}$ satisfies the Monge-Amp\`{e}re vortex equation.
			\begin{equation}
				\label{23}
				i\Theta_{h}=(1-\lvert\phi\rvert_{h}^{2})\frac{\mu\omega_{\Sigma}+i\nabla_{h}^{1,0}\phi\wedge\nabla_{h}^{0,1}\phi^{*}}{(2r_{2}+\lvert\phi\rvert_{h}^{2})(2+2r_{2}-\lvert\phi\rvert_{h}^{2})},
			\end{equation}
			where $\mu=2(r_{2}(r_{1}+1)+r_{1}(r_{2}+1))$ and $\phi^{*}$ is the adjoint of $\phi$ with respect to $h$ when $\phi$ is considered as an endomorphism from the trivial line bundle to $L$. 
		\end{enumerate}
		Moreover, if a solution $h$ to \ref{23} satisfying $\lvert\phi\rvert_{h}^{2}\leq 1$ exists, then it is unique. 
	\end{theorem}
	Theorem $4.2$ of \cite{vamsi2} is :
	\begin{theorem}
		Suppose there is a smooth metric $h$ on $L$ satisfying
		\[\lvert\phi\rvert_{h}^{2}\leq 1,\]
		and solving the following equation.
		\[i\Theta_{h}=(1-\lvert\phi\rvert_{h}^{2})\frac{\xi+i\nabla_{h}^{1,0}\phi\wedge\nabla_{h}^{0,1}\phi^{*}}{(2r_{2}+\lvert\phi\rvert_{h}^{2})(2+2r_{2}-\lvert\phi\rvert_{h}^{2})},\]
		where $\xi>0$ is given $(1,1)$-form on $\Sigma$ satisfying 
		\[\int_{\Sigma}^{}\xi=2(r_{1}(r_{2}+1)+r_{2}(r_{1}+1)).\]
		Then there is a smooth Griffiths positively curved metric $H$ on the vortex bundle $V$ whose curvature $\Theta$ satisfies the vbMA equation:
		$(i\Theta)^{2}=\pi_{1}^{*}\xi\wedge\pi_{2}^{*}\omega_{FS}I.$
	\end{theorem}
	Equation \ref{eq:stability} implies that $R_{1}>R_{2}.$ Using the theorems stated above, we have a solution of \ref{27}. Therefore, there exists an almost Hermitian Einstein metric on $E$. \\
	\indent Uniqueness follows from the aforementioned theorems.
\end{proof}
\section{K\"ahlerness of the symplectic form.}\label{kahlerness}
In \cite{Vamsi3}, Pingali gave the moment map interpretation of the Calabi-Yang-Mills equations.	Let $(M,\omega)$ be an $n$-complex dimensaional compact K\"ahler manifold such that $[\omega]=[c_{1}(\tilde{L},h)]$ for some hermitian holomorphic line bundle $(\tilde{L},h)$ satisfying $\int \omega^{n}=1$. Let $(E,\tilde{h})$ be a Hermitian holomorphic vector bundle of rank $r$. The Calabi-Yang-Mills equations(as given in  \cite{Vamsi3}) are
\begin{equation}
	\label{qcym}
	\begin{split}
		&\sqrt{-1}\Theta_{B}\wedge n\omega_{\phi}^{n-1}=-\lambda'\omega_{\phi}^{n}Id\\
		&\omega_{\phi}^{n}(1+\frac{\alpha'\lambda'^{2}r}{2})-\eta'=\alpha' ch_{2}(B)n(n-1)\omega_{\phi}^{n-2},
	\end{split}
\end{equation} 
where $\lambda'$ is a topological constant, $\eta'$ is an $(n,n)$ form, $B$ is a connection and $\omega_{\phi}=\omega + \sqrt{-1}\partial\bar{\partial}\phi,\phi$ is a function on the manifold.\\
If we put $r=2, n=2$ in \ref{qcym}, we get the following.
\begin{equation}
	\label{vortex form}
	\begin{split}
		&\sqrt{-1}\Theta_{B}\wedge \omega_{\phi}=-\frac{\lambda'}{2}\omega_{\phi}^{2}Id\\
		&\omega_{\phi}^{2}(1+\alpha'\lambda'^{2})-\eta'=2\alpha' ch_{2}(B).
	\end{split}
\end{equation}
If we replace $\lambda'$ by $-2\lambda$, $\alpha'$ by $-\frac{\alpha}{2+4\alpha\lambda^{2}}$ and $\frac{\eta'}{1+4\alpha'\lambda^{2}}$ by $\eta$ in \ref{vortex form}, then it becomes
\begin{equation}
	\label{final form}
	\begin{split}
		&\sqrt{-1}\Theta_{B}\wedge \omega_{\phi}=\lambda\omega_{\phi}^{2}Id\\
		&\omega_{\phi}^{2}+\alpha ch_{2}(B)-\eta=0.
	\end{split}
\end{equation}
Vortex bundle is a rank $2$ vector bundle over the manifold $\Sigma\times\mathbb{P}^{1}$, where $\Sigma$ is a Riemann Surface. The Calabi-Yang-Mills equations (as given in \cite{Vamsi1}) on vortex bundle are 
\begin{equation}
	\label{CYM Representability}
	\begin{split}
		&\sqrt{-1}\Theta_{\alpha}\wedge \Omega_{\alpha}=\lambda\Omega_{\alpha}^{2}Id\\
		&\Omega_{\alpha}^{2}+\alpha ch_{2}(E, H_{\alpha})-\eta=0,
	\end{split}
\end{equation}
where $\Omega_{\alpha}>0$ is a smooth form and $H_{\alpha}$ is a smooth metric on the vortex bundle.\\
Now the equations \ref{final form} resemble like \ref{CYM Representability} .\\
The symplectic form on the infinite dimensional manifold $\mathcal{A}_{\mathcal{E}}^{1,1}\times \mathcal{A}^{1,1}$ is
\begin{equation}
	\label{Symplectic form}
	\begin{split}	
		&\frac{(2\pi)^{n+1}}{(\sqrt{-1})^{n-1}}\Omega_{\alpha'}(a_{E}\oplus a_{\tilde{L}},b_{E}\oplus b_{\tilde{L}})=-N\alpha'\int_{M}tr(a_{E}\wedge b_{E})n\Theta_{\tilde{L}}^{n-1}\\ &-N\alpha'\int_{M}(tr(\Theta_{E}a_{E})b_{\tilde{L}}n(n-1)\Theta_{\tilde{L}}^{n-2}+a_{\tilde{L}}tr(\Theta_{E}b_{E})n(n-1)\Theta_{\tilde{L}}^{n-2})\\
		&-N\alpha'\lambda'\int_{M} n\Theta_{\tilde{L}}^{n-1}tr(a_{E})b_{\tilde{L}}-N\alpha'\lambda' \int_{M} n\Theta_{\tilde{L}}^{n-1} a_{\tilde{L}} tr(b_{E})+N\int_{M} a_{\tilde{L}}\wedge b_{\tilde{L}}\wedge n\Theta_{\tilde{L}}^{n-1}\\
		&+N\left(-\alpha'\int_{M}tr(\Theta_{E}^{2})n{n-1 \choose 2}a_{\tilde{L}}\wedge b_{\tilde{L}}\wedge\Theta_{\tilde{L}}^{n-3}-\lambda'\alpha'\int_{M}{n \choose 2}tr(\Theta_{E})\Theta_{\tilde{L}}^{n-2}a_{\tilde{L}}\wedge b_{\tilde{L}}\right),\\
	\end{split}	
\end{equation}
where $\mathcal{A}_{\mathcal{E}}^{1,1}$ is the space of smooth unitary integrable connections on a vector bundle $E$ and  $\mathcal{A}^{1,1}$ is the space of smooth integrable unitary connections on $\tilde{L}$. The tangent space at  $A \in \mathcal{A}_{\mathcal{E}}^{1,1}$ consists of skew-hermitian endomorphism valued 1-forms whose $(0,1)$ part is $d_{A}^{0,1}$ closed(may also be identified with $d_{A}^{0,1}$ closed endomorphism valued $(0,1)$ forms). The tangent spaces at $A\in \mathcal{A}^{1,1}$
consists of $(0,1)$ forms $a^{0,1}$ satisfying $\bar{\partial}a^{0,1}=0$.\\
For $n=2$, the symplectic form is the following.
\begin{equation}
	\label{Symplectic form for n=2}
	\begin{split}	
		&\frac{(2\pi)^{3}}{(\sqrt{-1})}\Omega_{\alpha'}(a_{E}\oplus a_{\tilde{L}},b_{E}\oplus b_{\tilde{L}})=-2N\alpha'\int_{M}tr(a_{E}\wedge b_{E})\Theta_{\tilde{L}}\\ &-2N\alpha'\int_{M}(tr(\Theta_{E}a_{E})b_{\tilde{L}}+a_{\tilde{L}}tr(\Theta_{E}b_{E}))\\
		&-2N\alpha'\lambda'\int_{M} \Theta_{\tilde{L}}tr(a_{E})b_{\tilde{L}}-2N\alpha'\lambda' \int_{M} \Theta_{\tilde{L}} a_{\tilde{L}} tr(b_{E})\\
		&+N\left(-\lambda'\alpha'\int_{M}tr(\Theta_{E})a_{\tilde{L}}\wedge b_{\tilde{L}}\right)
		+2N\int_{M} a_{\tilde{L}}\wedge b_{\tilde{L}}\wedge \Theta_{\tilde{L}}.
	\end{split}	
\end{equation}
We want to check whether this is K\"ahler or not for the vortex bundle ansatz. In \cite{Vamsi3}, the almost complex structure is mentioned. The elements of $\mathcal{A}_{\mathcal{E}}^{1,1}$(where $E$ is the vortex bundle) is of the form 
\begin{equation}
	a_{E} = 
	\begin{bmatrix}
		(\delta A_{h_{1}})^{0,1} & \delta\beta  \\
		0 & (\delta A_{g_{2}})^{0,1}
	\end{bmatrix}
\end{equation} 
The elements of $\mathcal{A}^{1,1}$ is of the form $a_{\tilde{L}}=\pi_{1}^{*}\xi$, where $\xi$ is a $(0,1)$ form on $\Sigma$. We want to check whether $\sqrt{-1}\Omega_{\alpha'}(a_{E}\oplus a_{\tilde{L}},a_{E}^{\dagger}\oplus \overline{a_{\tilde{L}}})$ is positive or negative.
\begin{equation}
	\label{Symplectic form vortex}
	\begin{split}	
		&(2\pi)^{3}\Omega_{\alpha'}(a_{E}\oplus a_{\tilde{L}},a_{E}^{\dagger}\oplus \overline{a_{\tilde{L}}})=-2N\alpha'\int_{\Sigma \times \mathbb{P}^{1}}tr(a_{E}\wedge a_{E}^{\dagger})\sqrt{-1}\Theta_{\tilde{L}}\\ &-2N\alpha'\int_{\Sigma \times \mathbb{P}^{1}}(tr(\sqrt{-1}\Theta_{E}a_{E})\overline{a_{\tilde{L}}}+a_{\tilde{L}}tr(\sqrt{-1}\Theta_{E}a_{E}^{\dagger}))\\
		&-2N\alpha'\lambda'\int_{\Sigma \times \mathbb{P}^{1}} \sqrt{-1}\Theta_{\tilde{L}}tr(a_{E})\overline{a_{\tilde{L}}}-2N\alpha'\lambda' \int_{\Sigma \times \mathbb{P}^{1}} \sqrt{-1}\Theta_{\tilde{L}} a_{\tilde{L}} tr(a_{E}^{\dagger})\\
		&+N\left(-\lambda'\alpha'\int_{\Sigma \times \mathbb{P}^{1}}tr(\sqrt{-1}\Theta_{E})a_{\tilde{L}}\wedge \overline{a_{\tilde{L}}}\right)
		+2N\int_{\Sigma \times \mathbb{P}^{1}} a_{\tilde{L}}\wedge \overline{a_{\tilde{L}}}\wedge \sqrt{-1}\Theta_{\tilde{L}}.
	\end{split}	
\end{equation}
We follow the calculations in \cite{Vamsi1}. We have $\sqrt{-1}\Theta_{\tilde{L}}=\pi_{1}^{*}\omega_{\Sigma}+\frac{4}{\tau}\pi_{2}^{*}\omega_{FS}$(we will ommit the pullback in the following calculations) and
\begin{equation}
	\Theta_{E} = 
	\begin{bmatrix}
		\Theta_{h_{1}}-\beta\wedge\beta^{*} & \nabla^{(1,0)}\beta  \\
		-\nabla^{(0,1)}\beta^{*} & \Theta_{g_{2}}-\beta^{*}\wedge\beta
	\end{bmatrix}.
\end{equation} 
We now calculate the followings.
\begin{equation}
	\label{tr a a dagger}
	\begin{split}
		&tr(a_{E}\wedge a_{E}^{\dagger})\\
		&=(\delta A_{h_{1}})^{0,1}\wedge \overline{(\delta A_{h_{1}})^{0,1}}+\delta\beta \wedge 	(\delta \beta)^{*} + (\delta A_{g_{2}})^{0,1}\wedge \overline{(\delta A_{g_{2}})^{0,1}}
	\end{split}
\end{equation}
\begin{equation}
	\label{tr theta a}
	\begin{split}
		&tr(\Theta_{E}a_{E})\\
		&=	\Theta_{h_{1}}\wedge 	(\delta A_{h_{1}})^{0,1}-\beta\wedge\beta^{*}\wedge 	(\delta A_{h_{1}})^{0,1}-\nabla^{(0,1)}\beta^{*}\wedge\delta\beta + \Theta_{g_{2}}\wedge(\delta A_{g_{2}})^{0,1}-\beta^{*}\wedge\beta\wedge(\delta A_{g_{2}})^{0,1}
	\end{split}
\end{equation}
\begin{equation}
	\label{tr theta a dagger}
	\begin{split}
		&tr(\Theta_{E}a_{E}^{\dagger})\\
		&=\Theta_{h_{1}}\wedge 	\overline{(\delta A_{h_{1}})^{0,1}}-\beta\wedge\beta^{*}\wedge	\overline{(\delta A_{h_{1}})^{0,1}}+\nabla^{(1,0)}\beta\wedge (\delta \beta)^{*}+ \Theta_{g_{2}}\wedge \overline{(\delta A_{g_{2}})^{0,1}}-\beta^{*}\wedge\beta\wedge\overline{(\delta A_{g_{2}})^{0,1}}
	\end{split}
\end{equation}
We now calculate the terms of \ref{Symplectic form vortex}.\\
 We have 
\begin{equation}
	\label{term 4}
	\begin{split}
		&\sqrt{-1}\Theta_{\tilde{L}}\wedge tr(a_{E})\wedge \overline{a_{\tilde{L}}}
		=-\frac{4}{\tau}\omega_{FS}\wedge\overline{a_{\tilde{L}}}\wedge  (\delta A_{h_{1}})^{0,1}-\frac{4}{\tau}\omega_{FS}\wedge\overline{a_{\tilde{L}}}\wedge  (\delta A_{g_{2}})^{0,1}
	\end{split}
\end{equation}
\begin{equation}
	\label{term 5}
	\begin{split}
		&\sqrt{-1}\Theta_{\tilde{L}}\wedge a_{\tilde{L}}\wedge tr(a_{E}^{\dagger})
		=-\frac{4}{\tau}\omega_{FS}\wedge \overline{(\delta A_{h_{1}})^{0,1}}\wedge a_{\tilde{L}}-\frac{4}{\tau}\omega_{FS}\wedge \overline{(\delta A_{g_{2}})^{0,1}}\wedge a_{\tilde{L}}
	\end{split}
\end{equation}
and
\begin{equation}
	\label{term 1}
	\begin{split}
		&tr(a_{E}\wedge a_{E}^{\dagger})\wedge \sqrt{-1}\Theta_{\tilde{L}}\\
		&=-\frac{4}{\tau}\omega_{FS}\wedge\overline{(\delta A_{h_{1}})^{0,1}}\wedge(\delta A_{h_{1}})^{0,1}+\omega_{\Sigma}\wedge\delta\beta\wedge(\delta \beta)^{*}+((\delta A_{g_{2}})^{0,1}\wedge \overline{(\delta A_{g_{2}})^{0,1}})\wedge(\omega_{\Sigma}+\frac{4}{\tau}\omega_{FS})
	\end{split}
\end{equation}
Using \ref{tr theta a} and $\sqrt{-1}\Theta_{g_{2}}=\sqrt{-1}\Theta{f_{2}}+2\omega_{FS}$, we have
\begin{equation}
	\label{term 2}
	\begin{split}
		&tr(\sqrt{-1}\Theta_{E}a_{E})\overline{a_{\tilde{L}}}\\
		&=-\sqrt{-1}\beta\wedge\beta^{*}\wedge (\delta A_{h_{1}})^{0,1}\wedge\overline{a_{\tilde{L}}}-\sqrt{-1}\nabla^{0,1}\beta^{*}\wedge\delta\beta\wedge \overline{a_{\tilde{L}}}+2\omega_{FS}\wedge(\delta A_{g_{2}})^{0,1}\wedge\overline{a_{\tilde{L}}}\\
		&-\sqrt{-1}\beta^{*}\wedge\beta\wedge(\delta A_{g_{2}})^{0,1}\wedge\overline{a_{\tilde{L}}}
	\end{split}
\end{equation} 
Using \ref{tr theta a dagger} and
 $\sqrt{-1}\Theta_{g_{2}}=\sqrt{-1}\Theta_{f_{2}}+2\omega_{FS}$, we have 
\begin{equation}
	\label{term 3}
	\begin{split}
		&a_{\tilde{L}}\wedge tr(\sqrt{-1}\Theta_{E} a_{E}^{\dagger})\\
		&=-\sqrt{-1}\beta\wedge\beta^{*}\wedge a_{\tilde{L}}\wedge\overline{(\delta A_{h_{1}})^{0,1}}+\sqrt{-1}a_{\tilde{L}}\wedge\nabla^{1,0}\beta\wedge (\delta \beta)^{*}+2a_{\tilde{L}}\wedge\omega_{FS}\wedge\overline{(\delta A_{g_{2}})^{0,1}}\\
		&-\sqrt{-1}a_{\tilde{L}}\wedge\beta^{*}\wedge\beta\wedge\overline{(\delta A_{g_{2}})^{0,1}}
	\end{split}
\end{equation}
Using 
 $\sqrt{-1}\Theta_{g_{2}}=\sqrt{-1}\Theta_{f_{2}}+2\omega_{FS}$, we have
\begin{equation}
	\label{term 6}
	\begin{split}
		&tr(\sqrt{-1}\Theta_{E})\wedge a_{\tilde{L}}\wedge\overline{a_{\tilde{L}}}
		=2\omega_{FS}\wedge a_{\tilde{L}}\wedge \overline{a_{\tilde{L}}}
	\end{split}
\end{equation}
If we put \ref{term 1} , \ref{term 2} , \ref{term 3} , \ref{term 4} , \ref{term 5} , \ref{term 6} and $\sqrt{-1}\Theta_{\tilde{L}}=\omega_{\Sigma}+\frac{4}{\tau}\omega_{FS}$ in  \ref{Symplectic form vortex} , then we get 
\begin{equation}
	\label{symp}
	\begin{split}
		&\frac{8N\alpha'}{\tau}\int_{\Sigma \times \mathbb{P}^{1}}\omega_{FS}\wedge\overline{(\delta A_{h_{1}})^{0,1}}\wedge(\delta A_{h_{1}})^{0,1}+2N\alpha'\int_{\Sigma \times \mathbb{P}^{1}}\omega_{\Sigma}\wedge\overline{(\delta A_{g_{2}})^{0,1}}\wedge (\delta A_{g_{2}})^{0,1}\\
		&+\frac{8N\alpha'}{\tau}\int_{\Sigma \times \mathbb{P}^{1}}\omega_{FS}\wedge\overline{(\delta A_{g_{2}})^{0,1}}\wedge (\delta A_{g_{2}})^{0,1}+2N\alpha'\int_{\Sigma \times \mathbb{P}^{1}}\sqrt{-1}\beta\wedge\beta^{*}(-\overline{a_{\tilde{L}}}\wedge(\delta A_{h_{1}})^{0,1}-\overline{(\delta A_{h_{1}})^{0,1}}\wedge a_{\tilde{L}})\\
		&-2N\alpha'\int_{\Sigma \times \mathbb{P}^{1}}\sqrt{-1}\beta\wedge\beta^{*}(-\overline{a_{\tilde{L}}}\wedge(\delta A_{g_{2}})^{0,1}-\overline{(\delta A_{g_{2}})^{0,1}}\wedge a_{\tilde{L}})\\
		&+4N\alpha'\int_{\Sigma \times \mathbb{P}^{1}}\omega_{FS}\wedge(\overline{a_{\tilde{L}}}\wedge(\delta A_{g_{2}})^{0,1}+\overline{(\delta A_{g_{2}})^{0,1}}\wedge a_{\tilde{L}})
		+\frac{8N\alpha'\lambda'}{\tau}\int_{\Sigma \times \mathbb{P}^{1}}\omega_{FS}\wedge(\overline{a_{\tilde{L}}}\wedge(\delta A_{h_{1}})^{0,1}+\overline{(\delta A_{h_{1}})^{0,1}}\wedge a_{\tilde{L}})\\
		&+\frac{8N\alpha'\lambda'}{\tau}\int_{\Sigma \times \mathbb{P}^{1}}\omega_{FS}\wedge(\overline{a_{\tilde{L}}}\wedge(\delta A_{g_{2}})^{0,1}+\overline{(\delta A_{g_{2}})^{0,1}}\wedge a_{\tilde{L}})\\
		&+2N\alpha'\lambda'\int_{\Sigma \times \mathbb{P}^{1}}\omega_{FS}\wedge\overline{a_{\tilde{L}}}\wedge a_{\tilde{L}}-\frac{8N}{\tau}\int_{\Sigma \times \mathbb{P}^{1}}\omega_{FS}\wedge\overline{a_{\tilde{L}}}\wedge a_{\tilde{L}}+2N\alpha'\int_{\Sigma \times \mathbb{P}^{1}}\omega_{\Sigma}\wedge(\delta \beta)^{*}\wedge\delta\beta\\
		&+2N\alpha'\int_{\Sigma \times \mathbb{P}^{1}}\sqrt{-1}(\nabla^{0,1}\beta^{*}\wedge\delta\beta\wedge\overline{a_{\tilde{L}}}+\nabla^{1,0}\beta\wedge(\delta \beta)^{*}\wedge a_{\tilde{L}})
	\end{split}
\end{equation}
Using $\nabla^{0,1}\beta^{*}\wedge\delta\beta\wedge\overline{a_{\tilde{L}}}=\frac{\sqrt{-1}}{\tau}(\delta \phi)\omega_{FS}\wedge\overline{a_{\tilde{L}}}\wedge\nabla^{0,1}\phi^{*}$,\ \  $\nabla^{1,0}\beta\wedge(\delta \beta)^{*}\wedge a_{\tilde{L}}=\frac{\sqrt{-1}}{\tau}\omega_{FS}\wedge\nabla^{1,0}\phi\wedge a_{\tilde{L}}$ and $(\delta\beta)\wedge (\delta \beta)^{*}=\lvert \delta\phi\rvert_{h}^{2}\frac{\sqrt{-1}}{\tau}\omega_{FS}$, \ref{symp} becomes
\begin{equation}
	\begin{split}
		&\int_{\Sigma \times \mathbb{P}^{1}}(\frac{8N\alpha'}{\tau}-\frac{2N\alpha'\lvert\phi\rvert_{h}^{2}}{\tau}-\frac{8N\alpha'\lambda'}{\tau})\omega_{FS}\wedge\overline{(\delta A_{h_{1}})^{0,1}}\wedge(\delta A_{h_{1}})^{0,1}+2N\alpha'\int_{\Sigma \times \mathbb{P}^{1}}\omega_{\Sigma}\wedge\overline{(\delta A_{g_{2}})^{0,1}}\wedge(\delta A_{g_{2}})^{0,1}\\
		&+\int_{\Sigma \times \mathbb{P}^{1}}(\frac{8N\alpha'}{\tau}+\frac{2N\alpha'\lvert\phi\rvert_{h}^{2}}{\tau}+4N\alpha'+\frac{8N\alpha'\lambda'}{\tau})\omega_{FS}\wedge\overline{(\delta A_{g_{2}})^{0,1}}\wedge(\delta A_{g_{2}})^{0,1}\\
		&+\int_{\Sigma \times \mathbb{P}^{1}}(\frac{2N\alpha'\lvert\phi\rvert_{h}^{2}}{\tau}+\frac{8N\alpha'\lambda'}{\tau})\omega_{FS}\wedge\lvert\overline{a_{\tilde{L}}}+\overline{(\delta A_{h_{1}})^{0,1}}\rvert^{2}+\int_{\Sigma \times \mathbb{P}^{1}}(-4N\alpha'-\frac{8N\alpha'\lambda'}{\tau})\omega_{FS}\wedge\lvert\overline{a_{\tilde{L}}}-\overline{(\delta A_{g_{2}})^{0,1}}\rvert^{2}\\
		&-2N\alpha'\int_{\Sigma \times \mathbb{P}^{1}}\lvert\delta\phi\rvert_{h}^{2}\frac{\sqrt{-1}}{\tau}\omega_{FS}\wedge\omega_{\Sigma}+\int_{\Sigma \times \mathbb{P}^{1}}(-\frac{4N\alpha'\lvert\phi\rvert_{h}^{2}}{\tau}+4N\alpha'+2N\alpha'\lambda'-\frac{8N}{\tau})\omega_{FS}\wedge\overline{a_{\tilde{L}}}\wedge a_{\tilde{L}}\\
		&+\frac{2N\alpha'}{\tau}\int_{\Sigma \times \mathbb{P}^{1}}\omega_{FS}\wedge(\lvert\frac{1}{K}\overline{a_{\tilde{L}}}-K(\overline{\delta \phi})\nabla^{1,0}\phi\rvert^{2}-\frac{1}{K^{2}}\overline{a_{\tilde{L}}}\wedge a_{\tilde{L}}-K^{2}\lvert\delta\phi\rvert_{h}^{2}\nabla^{1,0}\phi\wedge\nabla^{0,1}\phi^{*})\\
		&+\int_{\Sigma \times \mathbb{P}^{1}}\frac{2N\alpha'\lvert\phi\rvert_{h}^{2}}{\tau}\omega_{FS}\wedge\lvert\overline{a_{\tilde{L}}}-\overline{(\delta A_{g_{2}})^{0,1}}\rvert^{2},
	\end{split}
\end{equation}
where $K$ is a constant to be chosen later. So the symplectic form is the following.
\begin{equation}
	\label{symplectic with positive term}
	\begin{split}
		&\sqrt{-1}(2\pi)^{3}\Omega_{\alpha'}(a_{E}\oplus a_{\tilde{L}},a_{E}^{\dagger}\oplus \overline{a_{\tilde{L}}})\\
		&=\int_{\Sigma \times \mathbb{P}^{1}}(\frac{8N\alpha'}{\tau}-\frac{2N\alpha'\lvert\phi\rvert_{h}^{2}}{\tau}-\frac{8N\alpha'\lambda'}{\tau})\omega_{FS}\wedge(\sqrt{-1})\overline{(\delta A_{h_{1}})^{0,1}}\wedge(\delta A_{h_{1}})^{0,1}\\
		&+\int_{\Sigma \times \mathbb{P}^{1}}(\frac{8N\alpha'}{\tau}+\frac{2N\alpha'\lvert\phi\rvert_{h}^{2}}{\tau}+4N\alpha'+\frac{8N\alpha'\lambda'}{\tau})\omega_{FS}\wedge(\sqrt{-1})\overline{(\delta A_{g_{2}})^{0,1}}\wedge(\delta A_{g_{2}})^{0,1}\\
		&+\int_{\Sigma \times \mathbb{P}^{1}}(\frac{2N\alpha'\lvert\phi\rvert_{h}^{2}}{\tau}+\frac{8N\alpha'\lambda'}{\tau})\omega_{FS}\wedge(\sqrt{-1})\lvert\overline{a_{\tilde{L}}}+\overline{(\delta A_{h_{1}})^{0,1}}\rvert^{2}\\
		&+\int_{\Sigma \times \mathbb{P}^{1}}(-4N\alpha'-\frac{8N\alpha'\lambda'}{\tau})\omega_{FS}\wedge(\sqrt{-1})\lvert\overline{a_{\tilde{L}}}-\overline{(\delta A_{g_{2}})^{0,1}}\rvert^{2}\\
		&+2N\alpha'\int_{\Sigma \times \mathbb{P}^{1}}\lvert\delta\phi\rvert_{h}^{2}\frac{1}{\tau}\omega_{FS}\wedge\omega_{\Sigma}+\int_{\Sigma \times \mathbb{P}^{1}}(-\frac{4N\alpha'\lvert\phi\rvert_{h}^{2}}{\tau}+4N\alpha'+2N\alpha'\lambda'-\frac{8N}{\tau}-\frac{2N\alpha'}{K^{2}\tau})\omega_{FS}\wedge(\sqrt{-1})\overline{a_{\tilde{L}}}\wedge a_{\tilde{L}}\\
		&+\frac{2N\alpha'}{\tau}\int_{\Sigma \times \mathbb{P}^{1}}\omega_{FS}\wedge(\sqrt{-1})\lvert\frac{1}{K}\overline{a_{\tilde{L}}}-K(\overline{\delta \phi})\nabla^{1,0}\phi\rvert^{2}-\frac{2N\alpha'}{\tau}\int_{\Sigma \times \mathbb{P}^{1}}\omega_{FS}\wedge\lvert\delta\phi\rvert_{h}^{2}K^{2}(\sqrt{-1})\nabla^{1,0}\phi\wedge\nabla^{0,1}\phi^{*}\\
		&+\int_{\Sigma \times \mathbb{P}^{1}}\frac{2N\alpha'\lvert\phi\rvert_{h}^{2}}{\tau}\omega_{FS}\wedge(\sqrt{-1})\lvert\overline{a_{\tilde{L}}}-\overline{(\delta A_{g_{2}})^{0,1}}\rvert^{2}+2N\alpha'\int_{\Sigma \times \mathbb{P}^{1}}\omega_{\Sigma}\wedge(\sqrt{-1})\overline{(\delta A_{g_{2}})^{0,1}}\wedge(\delta A_{g_{2}})^{0,1}.
	\end{split}
\end{equation}
Here, we want to remind ourselves that we made the substitution $\lambda'=-2\lambda<0$ and $\alpha'=-\frac{\alpha}{2+4\lambda^{2}\alpha}\leq 0$ to compare the Calabi-Yang-Mills equations.
From \cite{Vamsi1}, we have 
\begin{equation}
	\label{lambda equality}
	\lambda=\frac{\tau}{8}+\frac{c_{1}(L)\pi}{2vol(\Sigma)}
\end{equation}
and
\begin{equation}
	\label{first chern class inequality}
	0<c_{1}(L)<\frac{\tau vol(\Sigma)}{4\pi}.
\end{equation}
Now
\begin{equation}
	\label{positive 1}
	\begin{split}
		&\frac{2N\alpha'\lvert\phi\rvert_{h}^{2}}{\tau}+\frac{8N\alpha'\lambda'}{\tau}\\
		&=\frac{N\alpha'}{\tau}(2\lvert\phi\rvert_{h}^{2}-2\tau-\frac{c_{1}(L)\pi}{2 vol(\Sigma)})\geq 0.
	\end{split}
\end{equation}
In the second line we substituted $\lambda'$ by $-2\lambda$ and used \ref{lambda equality} . The term \ref{positive 1} is non-negative because $\alpha'\leq0$, $\lvert\phi\rvert_{h}^{2}\leq \tau$ and \ref{first chern class inequality}.\\
Also
\begin{equation}
	\label{positive 2}
	\begin{split}
		&-4N\alpha'-\frac{8N\alpha'\lambda'}{\tau}\\
		&=N\alpha'(-2+\frac{8c_{1}(L)\pi}{\tau vol(\Sigma)})\geq 0
	\end{split}
\end{equation}
In the second line we substituted $\lambda'$ by $-2\lambda$ and used \ref{lambda equality} . The term \ref{positive 2} is non-negative because $\alpha'\leq 0$ and \ref{first chern class inequality}.\\
Now using the inequality $\lvert a\pm b\rvert^{2}\leq 2\lvert a\rvert^{2}+2\lvert b\rvert^{2}$, \ref{symplectic with positive term} becomes
\begin{equation}
	\label{final expression}
	\begin{split}
			&\sqrt{-1}(2\pi)^{3}\Omega_{\alpha'}(a_{E}\oplus a_{\tilde{L}},a_{E}^{\dagger}\oplus \overline{a_{\tilde{L}}})\\
		&\leq\int_{\Sigma \times \mathbb{P}^{1}}(\frac{8N\alpha'}{\tau}+\frac{2N\alpha'\lvert\phi\rvert_{h}^{2}}{\tau}+\frac{8N\alpha'\lambda'}{\tau})\omega_{FS}\wedge(\sqrt{-1})\overline{(\delta A_{h_{1}})^{0,1}}\wedge(\delta A_{h_{1}})^{0,1}\\
		&+\int_{\Sigma \times \mathbb{P}^{1}}(\frac{8N\alpha'}{\tau}+\frac{2N\alpha'\lvert\phi\rvert_{h}^{2}}{\tau}-4N\alpha'-\frac{8N\alpha'\lambda'}{\tau})\omega_{FS}\wedge(\sqrt{-1})\overline{(\delta A_{g_{2}})^{0,1}}\wedge(\delta A_{g_{2}})^{0,1}\\
		&+\int_{\Sigma \times \mathbb{P}^{1}}\frac{2N\alpha'\lvert\delta\phi\rvert_{h}^{2}}{\tau}\omega_{FS}\wedge(\omega_{\Sigma}-K^{2}(\sqrt{-1})\nabla^{1,0}\phi\wedge\nabla^{0,1}\phi^{*})+2N\alpha'\int_{\Sigma \times \mathbb{P}^{1}}\omega_{\Sigma}\wedge(\sqrt{-1})\overline{(\delta A_{g_{2}})^{0,1}}\wedge(\delta A_{g_{2}})^{0,1}\\
		&+\int_{\Sigma \times \mathbb{P}^{1}}(-4N\alpha'+2N\alpha'\lambda'-\frac{8N}{\tau}-\frac{2N\alpha'}{K^{2}\tau})\omega_{FS}\wedge(\sqrt{-1})\overline{a_{\tilde{L}}}\wedge a_{\tilde{L}}\\
		&+\frac{2N\alpha'}{\tau}\int_{\Sigma \times \mathbb{P}^{1}}\omega_{FS}\wedge(\sqrt{-1})\lvert\frac{1}{K}\overline{a_{\tilde{L}}}-K(\overline{\delta \phi})\nabla^{1,0}\phi\rvert^{2}+\int_{\Sigma \times \mathbb{P}^{1}}\frac{2N\alpha'\lvert\phi\rvert_{h}^{2}}{\tau}\omega_{FS}\wedge(\sqrt{-1})\lvert\overline{a_{\tilde{L}}}-\overline{(\delta A_{g_{2}})^{0,1}}\rvert^{2}.
	\end{split}
\end{equation}
Now
\begin{equation}
	\label{negative 1}
	\begin{split}
		&\frac{8N\alpha'}{\tau}+\frac{2N\alpha'\lvert\phi\rvert_{h}^{2}}{\tau}+\frac{8N\alpha'\lambda'}{\tau}\\
		&=\frac{N\alpha}{\tau(2+4\lambda^{2}\alpha)}(-8-2\lvert\phi\rvert_{h}^{2}+2\tau+\frac{8c_{1}(L)\pi}{2 vol(\Sigma)})\\
		&\leq\frac{N\alpha}{\tau(2+4\lambda^{2}\alpha)}(-8+2\tau+\tau)
		<0.
	\end{split}
\end{equation}
In the second line we substituted $\lambda'$ by $-2\lambda$, used \ref{lambda equality}, substituted $\alpha'$ by $\frac{-\alpha}{2+4\lambda^{2}\alpha}$ and in the last line, we used \ref{first chern class inequality}. The last line holds whenever $\tau<\frac{8}{3}$.\\
Also
\begin{equation}
	\label{negative 2}
	\begin{split}
		&\frac{8N\alpha'}{\tau}+\frac{2N\alpha'\lvert\phi\rvert_{h}^{2}}{\tau}-4N\alpha'-\frac{8N\alpha'\lambda'}{\tau}\\
		&=\frac{N\alpha}{2+4\lambda^{2}\alpha}(-\frac{8}{\tau}-\frac{2\lvert\phi\rvert_{h}^{2}}{\tau}+2-\frac{8c_{1}(L)\pi}{\tau vol(\Sigma)})\\
		&<	\frac{N\alpha}{2+4\lambda^{2}\alpha}(-\frac{8}{\tau}+2)<0.			
	\end{split}
\end{equation}
In the second line, we substituted $\lambda'$ by $-2\lambda$, used \ref{lambda equality}, substituted $\alpha'$ by $-\frac{\alpha}{2+4\lambda^{2}\alpha}$. In the last line, we used \ref{first chern class inequality}. The last inequality holds whenever $\tau < 4$.\\
First note that $\frac{\tau}{8}<\lambda<\frac{\tau}{4}$ because of \ref{first chern class inequality}.\\
Also  
\begin{equation}
	\label{negative 3}
	\begin{split}
		&-4N\alpha'+2N\alpha'\lambda'-\frac{8N}{\tau}-\frac{2N\alpha'}{K^{2}\tau}\\
		&=N(-4\alpha'-\frac{\tau\alpha'}{2}-\frac{2\alpha' c_{1}(L)\pi}{vol(\Sigma)}-\frac{8}{\tau}-\frac{2\alpha'}{K^{2}\tau})\\
		&<N(-4\alpha'-\frac{\tau\alpha'}{2}-\frac{\tau\alpha'}{2}-\frac{8}{\tau}-\frac{2\alpha'}{K^{2}\tau})\\
		&=\frac{N}{\tau(2+4\lambda^{2}\alpha)}(-16-32\lambda^{2}\alpha+\alpha(4\tau+\tau^{2}+\frac{2}{K^{2}}))\\
		&<\frac{N}{\tau(2+4\lambda^{2}\alpha)}(-16-\frac{\tau^{2}\alpha}{2}+4\lambda\alpha+\tau^{2}\alpha+\frac{2\alpha}{K^{2}})\\
		&=\frac{N}{2\tau(2+4\lambda^{2}\alpha)}(-32+8\lambda\alpha+\tau^{2}\alpha+\frac{4\alpha}{K^{2}})
		<0.
	\end{split}
\end{equation}
In the second line, we used $\lambda'=-2\lambda$ and \ref{lambda equality} . In the third line, we used \ref{first chern class inequality} . In the fourth line, we used $\alpha'=-\frac{\alpha}{2+4\lambda^{2}\alpha}$. In the fifth line, we used $\frac{\tau}{8}<\lambda$. The last line holds whenever $-32+8\lambda\alpha+\tau^{2}\alpha+\frac{4\alpha}{K^{2}}<0$ .\\
Let $\xi=(e^{(-s\lvert\phi\rvert_{h}^{2})}\frac{\sqrt{-1}\nabla^{1,0}\phi\wedge\nabla^{0,1}\phi^{*}}{f})$. Let us take
\begin{equation}
	\label{defn w}
	w=\frac{\sqrt{-1}\nabla^{1,0}\phi\wedge\nabla^{0,1}\phi^{*}}{f}.
\end{equation}
Then
\begin{equation}
	\label{del del bar xi}
	\begin{split}
		&\partial\bar{\partial}\xi\\
		&=s^{2}e^{(-s\lvert\phi\rvert_{h}^{2})}\partial\lvert\phi\rvert_{h}^{2}\wedge\bar{\partial}\lvert\phi\rvert_{h}^{2}w-se^{(-s\lvert\phi\rvert_{h}^{2})}\partial\bar{\partial}\lvert\phi\rvert_{h}^{2}w-se^{(-s\lvert\phi\rvert_{h}^{2})}\partial w\wedge\bar{\partial}\lvert\phi\rvert_{h}^{2}\\
		&-se^{(-s\lvert\phi\rvert_{h}^{2})}\partial\lvert\phi\rvert_{h}^{2}\wedge\bar{\partial}w+e^{(-s\lvert\phi\rvert_{h}^{2})}\partial\bar{\partial}w
	\end{split}
\end{equation}
Suppose $\xi$ achieves its maximum value at a point $q$. Then $\sqrt{-1}\partial\bar{\partial}\xi(q)\leq 0$ and $\partial \xi (q)=0=\bar{\partial}\xi(q)$. We suppress the dependence on $q$, now onwards.
So
\begin{equation}
	\label{del w}
	\begin{split}
		&-se^{(-s\lvert\phi\rvert_{h}^{2})}\partial\lvert\phi\rvert_{h}^{2}w+e^{(-s\lvert\phi\rvert_{h}^{2})}\partial w=0\\
		&\implies \partial w= sw \partial \lvert\phi\rvert_{h}^{2}.
	\end{split}
\end{equation}
Similarly, we have-
\begin{equation}
	\label{del  bar w}
	\bar{\partial} w= sw \bar{\partial} \lvert\phi\rvert_{h}^{2}.
\end{equation}
Now
\begin{equation}
	\label{del del bar xi 1}
	\begin{split}
		&0\geq \sqrt{-1}\partial\bar{\partial}\xi\\
		&\implies 0 \geq -s^{2}we^{(-s\lvert\phi\rvert_{h}^{2})}\sqrt{-1}\partial\lvert\phi\rvert_{h}^{2}\wedge\bar{\partial}\lvert\phi\rvert_{h}^{2}-swe^{(-s\lvert\phi\rvert_{h}^{2})}\sqrt{-1}\partial\bar{\partial}\lvert\phi\rvert_{h}^{2}+e^{(-s\lvert\phi\rvert_{h}^{2})}\sqrt{-1}\partial\bar{\partial}w\\
		&\implies 0 \geq -s^{2}w^{2}\lvert\phi\rvert_{h}^{2}f+sw \lvert\phi\rvert_{h}^{2}f(\frac{\tau-\lvert\phi\rvert_{h}^{2}}{2})(\frac{4}{I}+
		\frac{\alpha w}{2(2\pi)^{2}I})-sw^{2}f+\sqrt{-1}\partial\bar{\partial}w\\
		&\implies 0 \geq f(-s^{2}\lvert\phi\rvert_{h}^{2}+s\lvert\phi\rvert_{h}^{2}(\frac{\tau-\lvert\phi\rvert_{h}^{2}}{2})\frac{\alpha}{2(2\pi)^{2}I}-s)w^{2}+\frac{2s\lvert\phi\rvert_{h}^{2}f(\tau-\lvert\phi\rvert_{h}^{2})}{I}w+\sqrt{-1}\partial\bar{\partial}w
	\end{split}
\end{equation}
In the second line, we used \ref{del w} and \ref{del  bar w}. In the third line, we used $\partial\lvert\phi\rvert_{h}^{2}\wedge\bar{\partial}\lvert\phi\rvert_{h}^{2}=\lvert\phi\rvert_{h}^{2}\nabla^{1,0}\phi\wedge\nabla^{0,1}\phi^{*}$ , $\partial\bar{\partial}\lvert\phi\rvert_{h}^{2}=-\Theta_{h}\lvert\phi\rvert_{h}^{2}+\nabla^{1,0}\phi\wedge\nabla^{0,1}\phi^{*}$ , \ref{defn w} and \ref{22} , where\\ $I=4+\frac{\tau\alpha}{(2\pi)^{2}}(2\lambda-\frac{\tau}{2})+\frac{\tau\alpha}{2(2\pi)^{2}}\lvert\phi\rvert_{h}^{2}-\frac{\alpha}{4(2\pi)^{2}}\lvert\phi\rvert_{h}^{4}$.\\
Now to calculate $\sqrt{-1}\partial\bar{\partial}w$, we need to choose good coordinates and trivialisations. We choose $(z,e)$ such that it is normal for $f$ at the point $q$ and normal for $h$ at the point $q$ with the properties that $\frac{\partial^{2}h}{\partial^{2}z^{2}}(q)=0=\frac{\partial^{2}h}{\partial^{2}\bar{z}^{2}}(q)$. Suppose, in this coordinate $f=\tilde{f} dz \wedge d \bar{z}$. The following calculations are done at the point $q$.\\
First, we calculate
\begin{equation}
	\label{special w}
	\begin{split}
		&w=\frac{\sqrt{-1}\nabla^{1,0}\phi\wedge\nabla^{0,1}\phi^{*}}{f}\\
		&=\frac{\sqrt{-1}(d+A)^{1,0}\phi\wedge(d+A)^{0,1}\phi^{*}}{\tilde{f}dz\wedge d\bar{z}}\\
		&=\frac{\frac{\partial \phi}{\partial z} \frac{\bar{\partial}\phi^{*}}{\partial\bar{z}}+A^{1,0}\phi \frac{\bar{\partial}\phi^{*}}{\partial\bar{z}}}{\tilde{f}}
		=\frac{\frac{\partial \phi}{\partial z} \frac{\bar{\partial}\phi^{*}}{\partial\bar{z}}}{\tilde{f}}
	\end{split}
\end{equation} 
$A^{1,0}$ can be taken to be $0$ at the point $q$.
Now, using \ref{special w} , we have
\begin{equation*}
	\begin{split}
		&\partial\bar{\partial}w\\
		&=\frac{1}{\tilde{f}}\partial[\frac{\partial \phi}{\partial z}\bar{\partial} (\frac{\bar{\partial}\phi^{*}}{\partial\bar{z}})+\phi \bar{\partial}A^{1,0} \frac{\bar{\partial}\phi^{*}}{\partial\bar{z}}+\phi A^{1,0}\bar{\partial}(\frac{\bar{\partial}\phi^{*}}{\partial\bar{z}})]+\big(\frac{\partial \phi}{\partial z} \frac{\bar{\partial}\phi^{*}}{\partial\bar{z}}+A^{1,0}\phi \frac{\bar{\partial}\phi^{*}}{\partial\bar{z}}\big)\partial\bar{\partial}(\frac{1}{\tilde{f}})\\
		&=\frac{1}{\tilde{f}}[\partial(\frac{\partial \phi}{\partial z})\wedge\bar{\partial} (\frac{\bar{\partial}\phi^{*}}{\partial\bar{z}})+\partial\phi\wedge \bar{\partial}A^{1,0} \frac{\bar{\partial}\phi^{*}}{\partial\bar{z}}+\phi\partial\bar{\partial}A^{1,0}\frac{\bar{\partial}\phi^{*}}{\partial \bar{z}}]\\
		&+\frac{1}{\tilde{f}}[A^{1,0}\partial\phi\wedge \bar{\partial}(\frac{\bar{\partial}\phi^{*}}{\partial\bar{z}})+\phi\partial A^{1,0}\wedge \bar{\partial}(\frac{\bar{\partial}\phi^{*}}{\partial \bar{z}})]+\big(\frac{\partial \phi}{\partial z} \frac{\bar{\partial}\phi^{*}}{\partial\bar{z}}+A^{1,0}\phi \frac{\bar{\partial}\phi^{*}}{\partial\bar{z}}\big)\partial\bar{\partial}(\frac{1}{\tilde{f}})\\
		&=\frac{1}{\tilde{f}}[\frac{\partial}{\partial z}(\frac{\partial \phi}{\partial z})\frac{\partial}{\partial \bar{z}} (\frac{\bar{\partial}\phi^{*}}{\partial\bar{z}})dz\wedge d\bar{z}+\frac{\partial \phi}{\partial z} \Theta_{h} \frac{\bar{\partial}\phi^{*}}{\partial\bar{z}}+\phi\partial\Theta_{h}\frac{\bar{\partial}\phi^{*}}{\partial \bar{z}}]\\
		&+\frac{1}{\tilde{f}}[A^{1,0}\partial\phi\wedge \bar{\partial}(\frac{\bar{\partial}\phi^{*}}{\partial\bar{z}})+\phi\partial A^{1,0}\wedge \bar{\partial}(\frac{\bar{\partial}\phi^{*}}{\partial \bar{z}})]+\big(\frac{\partial \phi}{\partial z} \frac{\bar{\partial}\phi^{*}}{\partial\bar{z}}+A^{1,0}\phi \frac{\bar{\partial}\phi^{*}}{\partial\bar{z}}\big)\partial\bar{\partial}(\frac{1}{\tilde{f}})\\
	\end{split}
\end{equation*}
\begin{equation}
		\label{long calculation}
		\begin{split}
		&=\frac{1}{\tilde{f}}\frac{\partial}{\partial z}(\frac{\partial \phi}{\partial z})\frac{\partial}{\partial \bar{z}} (\frac{\bar{\partial}\phi^{*}}{\partial\bar{z}})dz\wedge d\bar{z}+\frac{1}{\sqrt{-1}}\frac{\partial \phi}{\partial z} \big(\frac{\tau-\lvert\phi\rvert_{h}^{2}}{2I}f(4+\frac{\alpha}{2(2\pi)^{2}}w)\big) \frac{\bar{\partial}\phi^{*}}{\partial\bar{z}}\\
		&+\frac{1}{\sqrt{-1}}\phi f\{(\frac{-I-(\tau-\lvert\phi\rvert_{h}^{2})^{2}\frac{\alpha}{2(2\pi)^{2}}}{2I^{2}})(4+\frac{\alpha w}{2(2\pi)^{2}})+(\frac{\tau-\lvert\phi\rvert_{h}^{2}}{2I})(\frac{\alpha sw}{2(2\pi)^{2}})\}\frac{\partial\lvert\phi\rvert_{h}^{2}}{dz}\frac{\bar{\partial}\phi^{*}}{\partial \bar{z}} \\
		&+\frac{1}{\tilde{f}}[A^{1,0}\partial\phi\wedge \bar{\partial}(\frac{\bar{\partial}\phi^{*}}{\partial\bar{z}})+\phi\partial A^{1,0}\wedge \bar{\partial}(\frac{\bar{\partial}\phi^{*}}{\partial \bar{z}})]+\big(\frac{\partial \phi}{\partial z} \frac{\bar{\partial}\phi^{*}}{\partial\bar{z}}+A^{1,0}\phi \frac{\bar{\partial}\phi^{*}}{\partial\bar{z}}\big)\partial\bar{\partial}(\frac{1}{\tilde{f}})\\
		&=\frac{1}{\tilde{f}}\frac{\partial}{\partial z}(\frac{\partial \phi}{\partial z})\frac{\partial}{\partial \bar{z}} (\frac{\bar{\partial}\phi^{*}}{\partial\bar{z}})dz\wedge d\bar{z}+\frac{1}{\sqrt{-1}}\tilde{f}w \big(\frac{\tau-\lvert\phi\rvert_{h}^{2}}{2I}f(4+\frac{\alpha}{2(2\pi)^{2}}w)\big) \\
		&+\frac{1}{\sqrt{-1}} f\lvert\phi\rvert_{h}^{2}\tilde{f}w\{(\frac{-I-(\tau-\lvert\phi\rvert_{h}^{2})^{2}\frac{\alpha}{2(2\pi)^{2}}}{2I^{2}})(4+\frac{\alpha w}{2(2\pi)^{2}})+(\frac{\tau-\lvert\phi\rvert_{h}^{2}}{2I})(\frac{\alpha sw}{2(2\pi)^{2}})\} \\
		&+\big(\frac{\partial \phi}{\partial z} \frac{\bar{\partial}\phi^{*}}{\partial\bar{z}}+A^{1,0}\phi \frac{\bar{\partial}\phi^{*}}{\partial\bar{z}}\big)\partial\bar{\partial}(\frac{1}{\tilde{f}})\\
	\end{split}
\end{equation}
$\partial A^{1,0}(q)=0$ because $\frac{\partial^{2}h}{\partial z^{2}}=0$.
Now substituting \ref{long calculation} in \ref{del del bar xi 1}, we get
\begin{equation}
	\label{coefficient of w square} 
	\begin{split}
		&0 \geq f(-s^{2}\lvert\phi\rvert_{h}^{2}+s\lvert\phi\rvert_{h}^{2}(\frac{\tau-\lvert\phi\rvert_{h}^{2}}{2})\frac{\alpha}{2(2\pi)^{2}I}-s)w^{2}+\frac{2s\lvert\phi\rvert_{h}^{2}f(\tau-\lvert\phi\rvert_{h}^{2})}{I}w\\
		&+\frac{\sqrt{-1}}{\tilde{f}}\frac{\partial}{\partial z}(\frac{\partial \phi}{\partial z})\frac{\partial}{\partial \bar{z}} (\frac{\bar{\partial}\phi^{*}}{\partial\bar{z}})dz\wedge d\bar{z}+\tilde{f}w \big(\frac{\tau-\lvert\phi\rvert_{h}^{2}}{2I}f(4+\frac{\alpha}{2(2\pi)^{2}}w)\big) \\
		&+ f\lvert\phi\rvert_{h}^{2}\tilde{f}w\{(\frac{-I-(\tau-\lvert\phi\rvert_{h}^{2})^{2}\frac{\alpha}{2(2\pi)^{2}}}{2I^{2}})(4+\frac{\alpha w}{2(2\pi)^{2}})+(\frac{\tau-\lvert\phi\rvert_{h}^{2}}{2I})(\frac{\alpha sw}{2(2\pi)^{2}})\} 
		+\big(\frac{\partial \phi}{\partial z} \frac{\bar{\partial}\phi^{*}}{\partial\bar{z}}+A^{1,0}\phi \frac{\bar{\partial}\phi^{*}}{\partial\bar{z}}\big)\sqrt{-1}\partial\bar{\partial}(\frac{1}{\tilde{f}})\\
		&\implies 0 \geq \frac{\sqrt{-1}}{\tilde{f}}\frac{\partial}{\partial z}(\frac{\partial \phi}{\partial z})\frac{\partial}{\partial \bar{z}} (\frac{\bar{\partial}\phi^{*}}{\partial\bar{z}})dz\wedge d\bar{z}+\big(\frac{\partial \phi}{\partial z} \frac{\bar{\partial}\phi^{*}}{\partial\bar{z}}+A^{1,0}\phi \frac{\bar{\partial}\phi^{*}}{\partial\bar{z}}\big)\sqrt{-1}\partial\bar{\partial}(\frac{1}{\tilde{f}})\\
		&+f\big(-s^{2}\lvert\phi\rvert_{h}^{2}+s\lvert\phi\rvert_{h}^{2}(\frac{\tau-\lvert\phi\rvert_{h}^{2}}{2})\frac{\alpha}{2(2\pi)^{2}I}-s+\tilde{f}\frac{(\tau-\lvert\phi\rvert_{h}^{2})}{2I}\frac{\alpha}{2(2\pi)^{2}}+\tilde{f}\lvert\phi\rvert_{h}^{2}\frac{(\tau-\lvert\phi\rvert_{h}^{2})}{2I}\frac{\alpha s}{2(2\pi)^{2}}\\
		&-\tilde{f}\lvert\phi\rvert_{h}^{2}\frac{(I+(\tau-\lvert\phi\rvert_{h}^{2})^{2}\frac{\alpha}{2(2\pi)^{2}})}{2I^{2}}\frac{\alpha}{2(2\pi)^{2}}\big)w^{2}\\
		&+f\big(\frac{2s\lvert\phi\rvert_{h}^{2}(\tau-\lvert\phi\rvert_{h}^{2})}{I}+\tilde{f}\frac{2(\tau-\lvert\phi\rvert_{h}^{2})}{I}-\tilde{f}\lvert\phi\rvert_{h}^{2}\frac{2(I+(\tau-\lvert\phi\rvert_{h}^{2})\frac{\alpha}{2(2\pi)^{2}})}{I}\big)w\\
	\end{split}
\end{equation}
We can ignore the term $\big(\frac{\partial \phi}{\partial z} \frac{\bar{\partial}\phi^{*}}{\partial\bar{z}}+A^{1,0}\phi \frac{\bar{\partial}\phi^{*}}{\partial\bar{z}}\big)\sqrt{-1}\partial\bar{\partial}(\frac{1}{\tilde{f}})$ in the above maximum principle, whenever Ricci curvature of $f$ is positive. We can choose $s$ small negative number and $\alpha$ small enough such that the coefficient of $w^{2}$ is positive. This implies that $\xi$ is bounded above and the bound does not depend on $f$. 
So we have
\begin{equation}
	\begin{split}
		&\frac{\sqrt{-1}\nabla^{1,0}\phi\wedge\nabla^{0,1}\phi^{*}}{f}<C_{1}
		\implies \frac{\frac{4}{C_{1}V}\sqrt{-1}\nabla^{1,0}\phi\wedge\nabla^{0,1}\phi^{*}}{\omega_{\Sigma}}<1,
	\end{split}
\end{equation}
where $C_{1}$ is a constant and $V=4+\frac{\tau\alpha}{(2\pi)^{2}}(2\lambda-\frac{\tau}{2})+\frac{\tau^{2}\alpha}{2(2\pi)^{2}}$. Now if we choose $K=\frac{2}{\sqrt{C_{1}V}}$, then we have
\begin{equation}
\label{negative 4} \omega_{\Sigma}-K^{2}\sqrt{-1}\nabla^{1,0}\phi\wedge\nabla^{0,1}\phi^{*}>0
\end{equation}
Using \ref{negative 1} , \ref{negative 2} , \ref{negative 3} , \ref{negative 4} , we see that \ref{final expression} is negative whenever $\tau< \frac{8}{3}$ and $\alpha$ is small enough such that the coefficient of $w^{2}$ is positive in \ref{coefficient of w square} and  $-32+8\lambda\alpha+\tau^{2}\alpha+\frac{4\alpha}{K^{2}}<0$(for $K=\frac{2}{\sqrt{C_{1}V}}$). This implies that $-\sqrt{-1}(2\pi)^{3}\Omega_{\alpha'}(a_{E}\oplus a_{\tilde{L}},a_{E}^{\dagger}\oplus \overline{a_{\tilde{L}}})$ is positive whenever the $\tau$ , $\alpha$ satisfies the above conditions  and Ricci curvature of $f$ is positive.

 Department of Mathematics, Indian Institute of Science, Bangalore, India - $560012$\\
   E-mail address: \textit{kartickghosh@iisc.ac.in}
\end{document}